\newtheorem{theorem}{Theorem}[section]
\newtheorem{lemma}[theorem]{Lemma}
\newtheorem{proposition}[theorem]{Proposition}
\newtheorem{assumption}[theorem]{Assumption}
\theoremstyle{definition}
\newtheorem{definition}[theorem]{Definition}
\newtheorem{remark}[theorem]{Remark}
\newcommand{\Tr}{\mathrm{Tr}}
\def \C{\mathbb{C}}
\def \E{\mathbb{E}}
\def \F{\mathbb{F}}
\def \H{\mathbb{H}}
\def \N{\mathbb{N}}
\def \P{\mathbb{P}}
\def \R{\mathbb{R}}
\def\Bc{{\cal B}}
\def\Fc{{\cal F}}
\def\Gc{{\cal G}}
\def\Hc{{\cal H}}
\def\Ic{{\cal I}}
\def\Jc{{\cal J}}
\def\Sc{{\cal S}}
\def \d {\delta}
\def \id {\mathds{1}}
\def \i {{\rm i}}
\def \d {{\rm d}}
\def \dist {\mathsf{dist}}
\title{\Large \bf
Stabilization of jump-diffusion stochastic differential equations by hysteresis switching
}
\date{\today}
\author{Weichao Liang
\thanks{\small Xi'an Jiaotong University, Faculty of Electronic and Information Engineering, School of Automation Science and Technology \texttt{(weichao.liang@xjtu.edu.cn).}}%
\and 
Gaoyue Guo
\thanks{
{\small Université Paris-Saclay CentraleSupélec, Laboratoire MICS and CNRS FR-3487 \texttt{(gaoyue.guo@centralesupelec.fr)}. Guo acknowledges financial support  \emph{Bourse ``Systemic Robustness and Systemic Failure''} by \href{https://www.institutlouisbachelier.org/adn-de-lilb/les-fondations/}{\emph{Institut Europlace de Finance}}.}}
}
\begin{document}


\maketitle

\begin{abstract}
We address the stabilization of both classical and quantum systems modeled by jump-diffusion stochastic differential equations using a novel hysteresis switching strategy. Unlike traditional methods that depend on global Lyapunov functions or require each subsystem to stabilize the target state individually, our approach employs local Lyapunov-like conditions and state-dependent switching to achieve global asymptotic or exponential stability with finitely many switches almost surely. We rigorously establish the well-posedness of the resulting switched systems and derive sufficient conditions for stability. The framework is further extended to quantum feedback control systems governed by stochastic master equations with both diffusive and jump dynamics. Notably, our method relaxes restrictive invariance assumptions often necessary in prior work, enhancing practical applicability in experimental quantum settings. Additionally, the proposed strategy offers promising avenues for robust control under model uncertainties and perturbations, paving the way for future developments in both classical and quantum control.
\end{abstract}
\section{Introduction}\label{sec:intro}
Stochastic differential equations (SDEs) driven by both Wiener and Poisson processes, commonly referred to as jump-diffusion SDEs, provide a versatile framework for modeling systems influenced simultaneously by continuous fluctuations and discrete random perturbations~\cite{applebaum2009levy,protter2004stochastic}.  Stabilization of such systems is inherently challenging due to the complex interaction between diffusive noise and jump dynamics~\cite{applebaum2009asymptotic}. 

Switching control, which enables transitions between multiple controllers based on the system's state, provides enhanced flexibility in handling nonlinear dynamics and large uncertainties~\cite{teel2014stability,sun2011stability,liberzon2003switching}. In particular, hysteresis switching~\cite{morse1992applications} effectively mitigates undesirable chattering, thereby improving robustness to noise and measurement errors and offering stability guarantees. These benefits are especially relevant in systems influenced simultaneously by Brownian motion and Poisson jumps, where the simultaneous presence of continuous and discrete stochastic elements introduces additional complexity.

In this work, we investigate the stabilization of jump-diffusion SDEs via state-dependent hysteresis switching laws. Unlike time-scheduled or Markovian switching, hysteresis-based strategies rely on the evolution to determine switching, reducing sensitivity to noise-induced switching and improving practical applicability.
Existing frameworks for the stability analysis of nonlinear switched systems driven by diffusion processes have been developed using common Lyapunov functions or multiple Lyapunov functions combined with comparison principles~\cite{chatterjee2006stability,wu2013stability,ren2021stability}. However, there has been limited study on the stabilization of switched jump-diffusion systems. Moreover, constructing common or multiple Lyapunov functions can be technically difficult, especially when the target state is not invariant under each individual subsystem. To address this challenge, we propose a new approach that does not require each subsystem to stabilize the target state individually. Instead, we assume that for each state, there exists at least one controller capable of steering the system toward the target state. Combining these local behaviors using stochastic trajectory analysis, we demonstrate that global asymptotic stability (GAS) or even global exponential stability (GES) can be achieved based on local Lyapunov-like arguments.

As a concrete application, we apply our theoretical framework to a class of jump-diffusion stochastic master equations (SMEs) arising in quantum feedback control under continuous measurements~\cite{breuer2002theory,wiseman2009quantum,belavkin1989nondemolition}. A central challenge in quantum control lies in stabilizing the system's quantum state—governed by SMEs—toward a desired pure state or subspace~\cite{altafini2012modeling}. This task becomes significantly more complex when the innovation processes include discontinuities induced by quantum jumps.

To address quantum stabilization, a variety of feedback strategies have been developed. These include Markovian feedback (output-feedback) methods~\cite{ticozzi2008quantum,ticozzi2009analysis,wiseman2009quantum}, and Bayesian feedback (filtering-based) methods~\cite{van2005feedback,mirrahimi2007stabilizing,ticozzi2012stabilization,liang2019exponential,liang2024model,liang2025exploring}.  For Markovian feedback, necessary and sufficient conditions for global asymptotic stability (GAS) have been established~\cite{altafini2012modeling}. However, these approaches are often difficult to implement in practice due to experimental complexity. Similarly, Bayesian feedback strategies typically rely on quantum non-demolition (QND) measurements, which remain technically demanding in real-world settings. Moreover, most existing feedback designs rely on Hamiltonian modulation, while dissipative mechanisms have traditionally been treated either as passive background dynamics or open-loop stabilization tools.
However, recent advances have demonstrated that engineered dissipation—when carefully controlled—can play an active and powerful role in quantum information tasks such as state preparation and quantum error correction~\cite{dissipativeQC,dissipativeencoding}. Motivated by these developments, recent work has explored dissipative switching control for diffusion-type SMEs~\cite{scaramuzza2015switching,grigoletto2021stabilization,liang2024switching}, combining coherent and dissipative resources. Dissipation is introduced via controlled interactions with engineered environments and measurement processes. While earlier results often treated switching as a technical tool to eliminate undesired equilibria~\cite{mirrahimi2007stabilizing,ticozzi2012stabilization}, our approach takes a fundamentally different view: we consider switching as an active control resource, explicitly designed and analyzed to robustly stabilize quantum systems in the presence of both continuous and jump stochastic dynamics. 
Nevertheless, existing dissipative switching control schemes for diffusion SMEs typically assume that the target state or subspace is invariant and globally asymptotically stable under each individual subsystem, a condition closely related to the multiple Lyapunov function framework in classical switched SDEs~\cite{teel2014stability}. In practice, however, this assumption is often difficult to verify or satisfy, particularly in quantum systems with complex dynamics and measurement imperfections.

Building on our framework for switching control of jump-diffusion SDEs, we extend it to the setting of open quantum systems undergoing continuous measurement, modeled by jump-diffusion SMEs. Our approach achieves stabilization without requiring the target state to be invariant or GAS for each subsystem. Instead, by analyzing the stochastic trajectories and using local Lyapunov-like arguments, we design switching strategies that ensure global asymptotic or exponential stability. This leads to a more practical and experimentally implementable solution for quantum feedback control.

Our main contribution is summarized as follows.
\begin{enumerate}
    \item \textbf{Well-posedeness}: We show in Theorem~\ref{thm:wellposed} that there exists a unique (strong) solution to switched jump-diffusion SDEs and SMEs.
    \item \textbf{Global Asymptotic/Exponential Stabilization}: We derive sufficient conditions and design switching laws $\sigma_1$ (for jump-diffusion SDEs) and $\sigma_2$ (for jump-diffusion SMEs) that ensure the GAS of the target state, summarized in Theorem~\ref{Theorem:GAS} and Proposition~\ref{Prop:GAS_SME}. Furthermore, by imposing an additional condition on the local behavior around the target state for one subsystem, we establish GES in Propositions~\ref{Proposition:GES} and~\ref{Proposition:GES_SME}.
\end{enumerate}
In addition to the rigorous proofs, we also provide intuitive insights explanations to clarify the rationale behind the proposed conditions in ensuring stabilization.

\subsection{Preliminaries: Switched stochastic differential equations}\label{ssec:prelimi}

This paper investigates the stabilization of systems governed by jump-diffusion stochastic differential equations (SDEs) of the form:
\begin{equation}\label{Eq:SDE_X}
    \d X_t=f\big(X_{t-}\big)\d t+g_k\big(X_{t-}\big)\d W_t+\int_{|x|\leq c}h\big(X_{t-},x\big)\tilde{N}(\d t,\d x),
\end{equation}
where $f, g:\R^{d\times d}\to \R^{d\times d}$ and $ h:\R^{d\times d}\times\mathbb{R}\longrightarrow \R^{d\times d}$ are measurable functions, $W$ denotes a real-valued  Brownian motions, $N$ stands for an independent Poisson random measure on $\mathbb{R}_+\times\R$ of intensity  $\d t \d x$ and $\tilde N$ is its compensated measure, i.e. $\tilde N(\d t,\d x):=N(\d t,\d x)-\d t \d x$. Well-posedness of~\eqref{Eq:SDE_X} under Lipschitz conditions can be ensured by standard arguments, see e.g.~\cite{applebaum2009levy}.

In many practical situations, constructing a global Lyapunov function to ensure stability is nontrivial. To address this challenge, we propose an alternative approach based on hysteresis switching strategies. This leads to  (controlled) SDEs defined on some  filtered probability space $(\Omega,\mathcal{F},\F=(\mathcal{F}_t)_{t\ge 0},\mathbb{P})$ which satisfies the usual hypotheses of completeness and right continuity,
\begin{equation}\label{Eq:SDE}
    \d X_t=\sum^m_{k=1}u^k_t\left( f_{k}\big(X_{t-}\big)\d t+g_k\big(X_{t-}\big)\d W^k_t+\int_{|x|\leq c}h_k\big(X_{t-},x\big)\tilde{N}^k(\d t,\d x)\right),
\end{equation}
where $W^1,\ldots, W^m$ are independent real-valued Brownian motions, $N^1,\ldots, N^m$ are independent Poisson random measures on $\mathbb{R}_+\times\R$, also independent of $W^1,\ldots, W^m$, with the same intensity $\d t\d x$, and $\tilde{N}^k(\d t,\d x):=N^k(\d t,\d x)-\d t \d x$. 

The control processes $u^k_t\in \{0,1\}$ represent bang-bang switching rules satisfying
$$\sum_{k=1}^m u^k_t =1,\quad \forall t\ge 0,$$
i.e., exactly one subsystem is active at each time, and their definition will be specified later.

Formally, we consider $m$ open subsets $\Theta_1,\ldots \Theta_m\subset \R^{d\times d}$ that form a partition of the state space $\R^{d\times d}$.
A switching event is triggered when the trajectory $X_t$ crosses a predefined switching surface. The objective is to design a switching law $u_t^k$ and analyze the stability of the solution $X_t$ with respect to a target state.


Inspired by the scale-independent hysteresis switching logic~\cite[Chapter 6]{liberzon2003switching}, we consider the regions $\Theta_1,\ldots, \Theta_m$, where the $p$-th subsystem is active when the system state lies in region $\Theta_p$, that is,
$$u_t^{k} = \id_{\{k=p\}},\quad \mbox{if } X_t\in \Theta_p,$$
where $\id$ denotes the indicator function. The region $\Theta_p$ is referred to as the \textit{active region} of $X_t$. 

However, since the sets $\Theta_1,\ldots, \Theta_m$ are open, it is possible for $X_t$ to lie on the boundary of multiple regions, potentially resulting in two distinct indices $k\neq k'$ such that $u^k_t=u^{k'}_t=1$, which contradicts the switching rule requiring exactly one active subsystem at each time. To address  this issue, we construct more refined switching rules to different scenarios where $\{X_t \in \Theta_k\}$ for $k\in [m]:=\{1,\ldots, m\}$, leading to corresponding coefficients  $f_k, g_k, h_k$. 

Before proceeding to control design and stability analysis, we first establish the well-posedness of the switched system~\eqref{Eq:SDE} under suitable structural assumptions. In particular, we assume that the open sets $\Theta_1,\ldots, \Theta_m$ satisfies the following conditions.

\begin{assumption}[Partition with no‐crossing jumps]\label{ass:partition} $\Theta_1,\dots,\Theta_m\subset\R^{d\times d}$ form a partition, i.e.,
\[
\bigcup_{k=1}^m \Theta_k\;=\;\R^{d\times d},
\]
Writing $P(X)=\{k:X\in \Theta_k\}$ for every $X\in\R^{d\times d}$, there exists  $k\in P(X)$ such that
\begin{enumerate}[(i)]
  \item \emph{(Exact invariance under jumps)}
  \[
    X + h_k\bigl(X,x\bigr)\in\Theta_k, \quad\forall\,|x|\le c.
  \]
  Equivalently, 
      \[
           X_{t-}\in\Theta_k\Longrightarrow X_{t-}+\int_{|x|\le c}h_k(X_{t-},x)\,N(\{t\},dx)\in\Theta_k.
      \]
  \item \emph{(Practical sufficient verification)}  
  A convenient (but more restrictive) way to guarantee~(i) is to impose the metric bound
  \[
    \dist(X,\partial \Theta_k):=\inf \big\{|X-Y|: Y\in \partial \Theta_k\big\}> \sup\big\{\|h_k(X,x)\|: |x|\leq c\big\}.
  \]
\end{enumerate}
\end{assumption}

\begin{remark}
  Assumption~\ref{ass:partition} allows us to avoid Zeno and chattering phenomenon induced by the continuous part of $X_t$. If $X_t$ were purely continuous, any two switching times would be
  separated by a nonzero time interval, hence no Zeno or chattering phenomenon.  However, in system~\eqref{Eq:SDE} a jump and a switch can in principle occur simultaneously,  potentially causing two switches at the same instant.  Inspired by~\cite{krystul2006stochastic}, we impose Assumption~\ref{ass:partition} so that all switching surfaces can only  be hit via the continuous evolution of~$X_t$.  Note that without Assumption~\ref{ass:partition} the solution remains well–defined, although Zeno or chattering phenomenon may occur when jumps cross region boundaries.

  Furthermore, in practice one may choose the exact invariance condition (i) for some regions and the simpler metric test (ii) for others, depending on which is easier to verify in each case.  In
  Assumption~\ref{ass:partition2} and ~\ref{ass:partition3} below we give concrete examples in which one $\Theta_k$ are treated by (ii) and the remaining by (i).
\end{remark}

Before stating our well-posedness result, we first define the switching law $u_t=(u^1_t,\ldots, u^m_t)$. Let $D(\R_+,\R^{d\times d})$ be the space of Skorokhod space endowed with the Skorokhod topology. A functional $\Phi:\R_+\times D(\R_+,\R^{d\times d})\to \R^m$ is said to be predictable process if $\Phi$ is measurable and satisfies for all $t\ge 0$ and $\omega,\omega'\in D(\R_+,\R^{d\times d})$
$$\Phi(t,\omega)=\Phi(t,\omega') \mbox{ whenver } \omega(s)=\omega'(s) \mbox{ for } s\in [0,t).$$
Provided the activation regions $\Theta_1,\ldots, \Theta_m$, we construct the following functional $\Phi=(\phi^1,\ldots, \phi^m)$ as follows: $\phi^k(t,\omega)\in  \{0,1\}$  and are defined recursively. Set $T_0(\omega):=0$ and 
\begin{equation*}
   k_0(\omega):= \min\big\{k\in [m]:~ \omega(0)\in \Theta_k \big\}.
\end{equation*}
Note that $k_0(\omega)<\infty$ holds under Assumption \ref{ass:partition}. Then, define $\phi^k(t,\omega):=\mathds{1}_{\{k=k_0(\omega)\}}$ for $t\le T_1(\omega)$, where $T_1(\omega):=\inf\{t>T_0(\omega): \omega(t)\notin \Theta_{k_0(\omega)}\}$
and 
\begin{equation*}
   k_1(\omega):= \min\big\{k\in [m]:~ \omega(T_1(\omega))\in \Theta_k \big\}, 
\end{equation*}
So $\phi^k(t,\omega):=\mathds{1}_{\{k=k_1(\omega)\}}$ for $T_1(\omega)<t\le T_2(\omega)$, where $T_n(\omega)$, and $k_n(\omega)$ can be defined similarly for $n\ge 2$. 
\begin{remark}
It is worth noting that, for every $\omega\in D(\R_+,\R^{d\times d})$ such that $\Delta \omega(t):=\omega(t)-\omega(t-)\le c$ for all $t\ge 0$, one must have either $T_{n^*}=\infty$ for some $n^*\in \N$ or $T_{n-1}< T_n$ for all $n\in\N$.
\end{remark}
Finally, we define the switching laws by setting $u_t=\Phi(t,X)$ for $t\in\R_+$ and $k\in [m]$. 

\subsection{Preliminaries: Switched stochastic master equations}\label{ssec:prelimi_sme}

A quantum trajectory is a solution of a matrix-valued stochastic differential equation which describes the time evolution of a quantum system undergoing continuous measurements. These equations are known as stochastic master equations (SMEs) or Belavkin's equations. In the literature,  two main types of SMEs are commonly studied: the diffusive type and the jump (Poisson-type) SME.
To state our results, we first outline the mathematical framework underlying the theory of \emph{quantum filtering}. Section~\ref{ssec:prelimi} provides the necessary background on switched stochastic differential equations, which parallels and supports the current setting.
We consider open quantum systems defined on a Hilbert space $\H$ of dimension $d\in \N$. Let $\Bc(\H)$\footnote{For technical reasons we are restricted to finite-dimensional Hilbert spaces. Note that both $\H$ and $\Bc(\H)$ can be identified as the standard Euclidean spaces of suitable dimension. As a result, we can invoke known results in Euclidean space without elaboration.} be the space of (linear) bounded operators on $\H$ (where we note that $\Bc(\H)$ is isomorphic to $\R^{d\times d}$ and all its norms are equivalent), and we define respectively
\begin{align*}
    \Bc_*(\H)&:=\{A\in\Bc(\H):~ A=A^* \},\\
    \Bc_+(\H)&:=\{A\in\Bc_*(\H):~A\geq 0\},\\
    \Sc(\H)&:=\{A\in\Bc_+(\H):~\mathrm{Tr}[A]=1\},
\end{align*}
where $A^*$ denotes the adjoint operator of $A$. All the elements of $\Sc(\H)$ are called \emph{density matrices},  and are used to describe the state of the quantum system under consideration. The commutator and anti-commutator are defined as follows: for $A, B\in \Bc(\H)$,  
$$[A,B]:=AB-BA \quad \mbox{and} \quad \{A,B\}:=AB + BA.$$ 
For $C\in\Bc(\H)$, we define
 the maps $\Ic_C, \Jc_C: \Bc(\H)\to \Bc(\H)$ and $v_C:\Bc(\H)\to\C$  that will be repeatedly used throughout the paper: for $A\in \Bc(\H)$,
$$\Ic_C(A):=CA C^*-\frac12 \{ C^*C, A\}, \quad \mathcal{J}_C(A)  :=\frac{CA C^*}{v_C(A)} \quad \mbox{and}\quad v_C(A):=\mathrm{Tr}[CA C^*].$$ 
In particular, $\Ic_C, v_C$ are linear maps and $\Jc_C : \Bc_+(\H) \hookrightarrow\Sc(\H)$, $v_C : \Bc_+(\H) \hookrightarrow\R_+$.
Then, we define the following maps:
\begin{align*}
    \mathcal{F}(A)&:=-\i[H,A]+\Ic_{L}(A)+\Ic_{C}(A)+\Ic_{D}(A), \\   
    \mathcal{G}_C(A)&:=CA+AC^*-\mathrm{Tr}[(C+C^*)A]A,\\
    \mathcal{H}_D(A)&:=  \mathcal{J}_{D}(A)-A.
\end{align*}
where $\i:=\sqrt{-1}$.

A quantum system is characterized by its Hamiltonian $H\in \Bc_*(\H)$ and noise operators $L, C, D\in\Bc(\H)$. The corresponding SME is given by 
\begin{align}
    \d\rho_t=\mathcal{F}(\rho_{t-}) \d t  +\mathcal{G}_C(\rho_{t-})\d W_t + \int_{\mathbb{R}}\mathds{1}_{\{0\le x\le v_{D}(\rho_{t-}) \}} \mathcal{H}_D(\rho_{t-})\tilde N(\d t,\d x), \label{eq:sme0}
\end{align}

Motivated by both theoretical and practical significance, well-posedness of \eqref{eq:sme0} has emerged as a fundamental question and has drawn abundant attention. Using the fact that the solution $\rho_t\in \Sc(\H)$ yields $v_{D}(\rho_{t-})\le c$ for some $c>0$ large enough and thus
$$\int_{\mathbb{R}}\mathds{1}_{\{0\le x\le v_{D}(\rho_{t-}) \}} \mathcal{H}_{D}(\rho_{t-})\tilde N(\d t,\d x)=\int_{|x|\le c}\mathds{1}_{\{0\le x\le v_{D}(\rho_{t-}) \}} \mathcal{H}_{D}(\rho_{t-})\tilde N(\d t,\d x),$$
\eqref{eq:sme0} formally appears as a specific case of general jump-diffusion SDEs. However, the following observations preclude the direct application of the general well-posedness theory:
\begin{itemize}
    \item $A\mapsto \mathrm{Tr}[(C+C^*)A]A$ is not (globally) Lipschitz;
    \item $A\mapsto \Jc_D(A)$ is not Lipschitz and $(A,x)\mapsto \mathds{1}_{\{0\le x\le v_{D}(A) \}}$ is not continuous.
\end{itemize}
Fortunately, well-posedness of \eqref{eq:sme0} has been established by Pellegrini in \cite{pellegrini2010markov} which ensures that there exists a unique \emph{strong solution} $\rho=(\rho_t)_{t\ge 0}$ to \eqref{eq:sme0} so that $\rho_t\in\Sc(\H)$ if $\rho_0\in \Sc(\H)$. 

In realistic experimental settings, constructing a global Lyapunov function to ensure stabilization of a quantum system toward a target state or subspace under jump-diffusion stochastic master equations (SMEs) is often infeasible due to mathematical and physical complexity. To address this challenge, we adopt the hysteresis switching strategies proposed in Section~\ref{ssec:prelimi}. We consider a quantum system characterized by a free Hamiltonian $H_0\in \Bc_*(\H)$, coupled to a control field via a control Hamiltonian $H_k\in \Bc_*(\H)$, and interacting with an external reservoir described by the noise operator $L_k\in\Bc(\H)$. The system is continuously monitored through indirect measurements, including homodyne/heterodyne detection and photon counting, modeled respectively by noise operators $C_k, D_k\in\Bc(\H)$. The evolution of the system under continuous measurement and switching control is described by the following SME:
\begin{equation}\label{eq:sme_switch}
    \d \rho_t=\sum^m_{k=1}u^k_t\left( \Fc_k(\rho_{t-})\d t +\mathcal{G}_k(\rho_{t-})\d W^k_t+ \int_{\mathbb{R}}\mathcal{H}_k(\rho_{t-})\mathds{1}_{\{0\le x\le v_{D_k}(\rho_{t-}) \}}\tilde N^k(\d t,\d x)\right),
\end{equation}
where $u^k_t\in \{0,1\}$ represents bang-bang switching rules related to the $m\in\mathbb{N}$ open subsets $\Theta_1,\ldots \Theta_m\subset \mathcal{S}(\mathbb{H})$. Under suitable switching rules, each region $\Theta_k$ corresponds to coefficients 
\begin{align*}
    \mathcal{F}_k(A)&:=-\i[H_0+H_k,A]+\Ic_{L_k}(A)+\Ic_{C_k}(A)+\Ic_{D_k}(A), \\   
    \mathcal{G}_k(A)&:=C_kA+AC_k^*-\mathrm{Tr}[(C_k+C_k^*)A]A,\\
    \mathcal{H}_k(A)&:=  \mathcal{J}_{D_k}(A)-A.
\end{align*}

\section{Well-posedness}\label{sec:well}
Our first result concerns the well-posedness of the switched general jump-diffusion SDE~\eqref{Eq:SDE} and switched jump-diffusion SME \eqref{eq:sme_switch}.  We adopt the following definition. 
\begin{definition}
A process $X$ on $(\Omega,\mathcal{F},\F=(\mathcal{F}_t)_{t\ge 0},\mathbb{P})$ is said to be a  strong solution to 
\begin{equation}
    X_t= X_0+ \int_0^t f\big(X_{s-}\big)\d s+ \int_0^t g\big(X_{s-}\big)\d W_s+\int_0^t \int_{|z|\leq c}h\big(X_{s-},z\big)\tilde{N}(\d s,\d z),\quad \forall t\ge 0.
\end{equation}
if $\P\big(X\in D(\R_+,\R^{d\times d}) \big)=1$ and $X$ is adapted. 
\end{definition}
Then one has the following well-posedness result.
\begin{theorem}\label{thm:wellposed}
Under Assumption~\ref{ass:partition}, there exists a unique strong solution to \eqref{Eq:SDE} if one of the following conditions holds:
\begin{itemize}
    \item[\emph{(A)}] $X_0\in\Sc(\H)$ and $f_k=\Fc_k, g_k=\Gc_k, h_k=\Hc_k$ for $k\in [m]$;
    \item[\emph{(B)}] $\E[|X_0|^2]<\infty$ and there exists $L>0$ so that
    $$\sum_{k=1}^m \left( |f_k(x)-f_k(y)|^2 + |g_k(x)-g_k(y)|^2 + \int_{|z|\leq c}|h_k(x,z)-h_k(y,z)|^2\d z\right) \le L|x-y|^2,$$
for all $x,y\in\R^{d\times d}$.
\end{itemize}
In particular, there exists a unique solution $\rho$ to \eqref{eq:sme0} with $\rho_t\in \Sc(\H)$ for all $t\ge 0$.
\end{theorem}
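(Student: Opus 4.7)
The plan is to construct the solution inductively on the random intervals $[T_n, T_{n+1})$ produced by the predictable switching functional $\Phi$ from Section~\ref{ssec:prelimi}. On each such interval exactly one regime $k_n$ is active, so \eqref{Eq:SDE} reduces to a classical (non-switched) jump-diffusion SDE with a single set of coefficients, to which well-known well-posedness results apply. The global solution is then obtained by concatenating the pieces, provided we can show $T_n \uparrow \infty$ almost surely, and uniqueness propagates through the same induction.

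On the initial piece $[0, T_1)$, with $T_1 := \inf\{t > 0 : X_t \notin \Theta_{k_0}\}$, we consider the single-regime equation
\begin{equation*}
    \d X_t = f_{k_0}(X_{t-})\,\d t + g_{k_0}(X_{t-})\,\d W_t^{k_0} + \int_{|z|\le c} h_{k_0}(X_{t-},z)\,\tilde N^{k_0}(\d t,\d z),
\end{equation*}
started from $X_0$. In case~\emph{(B)}, global Lipschitz continuity yields a unique strong solution with $\E \sup_{t \le T}|X_t|^2 < \infty$ for every $T > 0$, via the standard Picard iteration for L\'evy-driven SDEs~\cite{applebaum2009levy}. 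In case~\emph{(A)}, the coefficients equal $\Fc_{k_0}, \Gc_{k_0}, \Hc_{k_0}$ and Pellegrini's theorem~\cite{pellegrini2010markov} supplies a unique strong solution that takes values in $\Sc(\H)$. The crucial consequence of Assumption~\ref{ass:partition}(i) (or its sufficient variant (ii)) is that jumps of $X$ cannot escape $\Theta_{k_0}$, so the region can only be exited through the continuous component of the trajectory; in particular $X_{T_1-} \in \partial \Theta_{k_0}$ and $T_1 > 0$ almost surely. Restarting at $T_1$ with the regime $k_1 = k_1(X_{T_1})$ prescribed by $\Phi$ and iterating yields a solution on $[0, T_\infty)$ with $T_\infty := \lim_n T_n$. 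Pathwise uniqueness on each interval $[T_n, T_{n+1})$ forces any two strong solutions sharing $X_0$ to coincide on all subsequent intervals, which gives global uniqueness.

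The main obstacle is showing $T_\infty = \infty$ almost surely, i.e.\ ruling out accumulation of switching times. Because no switch is triggered by a jump, each inter-switch interval $T_{n+1} - T_n$ dominates the continuous-exit time of an open set at strictly positive distance from $\partial \Theta_{k_n}$; standard hitting-time lower bounds for It\^o processes with bounded jump intensities, combined with a Borel--Cantelli argument, then yield $T_n \to \infty$. Non-explosion of $X$ on $[0, T_\infty)$ is automatic in~\emph{(A)} since $\rho_t \in \Sc(\H)$ is compact-valued, while in~\emph{(B)} it follows from linear growth of the coefficients via a Gr\"onwall-type estimate on $\E \sup_{t \le T}|X_t|^2$. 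The final assertion about~\eqref{eq:sme0} is the special case $m = 1$ of~\emph{(A)} (with trivial partition $\Theta_1 = \R^{d\times d}$) and reduces directly to~\cite{pellegrini2010markov}.
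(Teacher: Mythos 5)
Your construction is essentially the paper's own proof: on each switching interval solve the single-regime jump-diffusion SDE (Lipschitz/Picard theory under (B), Pellegrini's well-posedness for the SME coefficients under (A)), paste the pieces at the exit times produced by the switching functional, and propagate pathwise uniqueness interval by interval, with Assumption~\ref{ass:partition} guaranteeing that each region is left only through the continuous part of the trajectory so that consecutive switching times are strictly separated; the reduction of \eqref{eq:sme0} to Pellegrini's theorem at the end also matches the paper.

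The one place you go beyond the paper is the claim that $T_\infty=\infty$ almost surely because ``each inter-switch interval dominates the continuous-exit time of an open set at strictly positive distance from $\partial\Theta_{k_n}$,'' finished off by Borel--Cantelli. As stated this does not hold up: after a switch the state lies in the open set $\Theta_{k_n}$, so $\dist(X_{T_n},\partial\Theta_{k_n})>0$ pointwise, but Assumption~\ref{ass:partition}(i) gives no quantitative lower bound on this distance, and even the metric variant (ii) only bounds it below by the local jump size; in particular the distances may shrink to zero along the sequence of switches, the exit-time lower bounds you invoke are not uniform in $n$, and the Borel--Cantelli step has nothing summable to work with. The paper itself extracts from Assumption~\ref{ass:partition} only that switches are triggered by the continuous component, hence $\tau_{n-1}<\tau_n$ strictly, and does not attempt a quantitative non-accumulation estimate; if you wish to keep your stronger claim you would need an extra hypothesis or argument (e.g.\ a uniform positive penetration of the post-switch state into the new region, or a dwell-time condition). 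Apart from this, your uniqueness induction is the paper's localization argument, though you should note explicitly, as the paper does, that at a common switching time the two solutions also agree because any jump there is the same function of the identical left limits.
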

\begin{proof}
The proof relies on the well-posedness result for jump-diffusion SDEs with Lipschitz coefficients under (A) (resp. for the SME \eqref{eq:sme0} considered by Pellegrini \cite{pellegrini2010markov} under (B)). For the sake of simplicity we only consider the case under (A), and start by showing the existence.  Consider the SDE 
$$Y_t = Y_0 + \int_0^t f_{k_{0}}\big(Y_{s-}\big)\d s +\int_0^t  g_{k_{0}}\big(Y_{s-}\big)\d W^{{k_{0}}}_s+ \int_0^t \int_{|x|\leq c}h_{{k_{0}}}\big(Y_{s-},x\big)\tilde{N}^{{k_{0}}}(\d s,\d x),\quad \forall t>0,$$
where $k_0$ (that might be random) is chosen according to
$$k_0:=\min\big\{k\in [m]:~ X_0\in \Theta_k\big\}.$$
This is a standard SDE and the Lipschitz conditions on $f_{k_{0}},g_{k_{0}}, h_{k_{0}}$ ensure the unique solution (for the second case under (B) it suffices to apply the existence and the uniqueness established in \cite{pellegrini2010markov,barchielli2009quantum}), denoted by $Y^0$. Define then
\begin{align*}
&\tau_1:=\inf\big\{t\ge 0:~ Y^{0}_t\notin \Theta_{k_{0}}\big\}, \quad k_1:=\min\Big\{k\in [m]:~ Y^{0}_{\tau_{1}}\in \Theta_k \Big\}.
\end{align*}
For $n\ge 2$, define respectively 
\begin{align*}
&\tau_n:=\inf\big\{t\ge \tau_{n-1}:~ Y^{n-1}_t\notin \Theta_{k_{n-1}}\big\}, \quad k_n:=\min\Big\{k\in [m]:~ Y^{n-1}_{\tau_n}\in \Theta_k \Big\},   
\end{align*}
where $Y^{n-1}$ is the unique solution to 
$$\d Y_t = f_{k_{n-1}}\big(Y_{t-}\big)\d t + g_{k_{n-1}}\big(Y_{t-}\big)\d W^{{k_{n-1}}}_t+ \int_{|x|\leq c}h_{{k_{n-1}}}\big(Y_{t-},x\big)\tilde{N}^{{k_{n-1}}}(\d t,\d x),\quad \forall t>\tau_{n-1},$$
such that $Y_{\tau_{n-1}}=Y^{n-2}_{\tau_{n-1}}$. In particular, Assumption \ref{ass:partition} gives that $\tau_n<\infty$ and $\tau_{n-1}<\tau_n$ for all $n\in\N$.
We define finally the process $X$ by
\begin{equation}\label{def:sol}
 X_t :=\sum_{n\ge 1}\id_{[\tau_{n-1}, \tau_n)}(t) Y^{n-1}_t,\quad \forall t\ge 0. 
\end{equation} 
By a straightforward verification, one may verify $X$ solves \eqref{Eq:SDE} and further   
\begin{equation}
\{0,1\}\ni u^k_t:=\sum_{n\ge 1}\id_{(\tau_{n-1}, \tau_n]}(t)\id_{\{k=k_{n-1}\}},\quad \forall t\ge 0. \label{def:switch}  
\end{equation}
Next we turn to prove the uniqueness. If $X'$ stands for a solution to \eqref{Eq:SDE}, define the sequence of stopping times $(\tau_n')_{n\ge 0}$ with $\tau_0':=0$, $k_0':=k_0$ and
\begin{align*}
&\tau_n':=\inf\big\{t\ge \tau_{n-1}':~ X_t\notin \Theta_{k_{n-1}}\big\}, \quad k_n':=\min\Big\{k\in [m]:~ X'_{\tau_n'}\in \Theta_k \Big\}.  
\end{align*}
With the help of this localization argument, one deduces immediately $X_t=X'_t$ for $t< \tau_1\wedge \tau_1'$, using the classical arguments under (A) and using the reasoning in the proof of Theorem~4 by \cite{pellegrini2010markov} under (B), and further $\tau_1=\tau_1'$ using the path uniqueness over $[0,\tau_1\wedge \tau_1')$ for the diffusive case estbalished in \cite{barchielli2009quantum}, and finally $X_{\tau_1}=X'_{\tau_1}$ by the observation that the jumps  $\Delta X_{\tau_1}, \Delta X'_{\tau_1}$ are the same function of $X_{\tau_1-}=X'_{\tau_1-}$. Repeating the arguments on the intervals $[\tau_{n-1},\tau_n), [\tau'_{n-1},\tau_n')$ using recursively $\tau_{n-1}=\tau'_{n-1}$, we deduce that $\tau_n=\tau_n'$ and $X|_{[\tau_{n-1},\tau_{n}]}=X'|_{[\tau_{n-1},\tau_{n}]}$. The proof is  fulfilled by pasting the solution over all the intervals. 
\end{proof}
\begin{remark}
In contrast to the findings presented in~\cite{wu2013stability,ren2021stability}, which focus on diffusion-type switched stochastic systems, our work investigates state-dependent switching for systems with both diffusion and jump dynamics. A key novelty lies in the flexibility of our switching laws. While the works of \cite{wu2013stability,ren2021stability} exclusively activate a single subsystem at each switching event across all sample paths, our paper adopts a more general approach. Specifically, during a switching even, a subsystem is individually designated for almost every sample path. This distinction allows for a more tailored enhancement of system performance through the application of the hysteresis switching.
\end{remark}

\section{Stabilization by hysteresis switching}\label{Sec:Hysteresis}

In order to make full use of diffusion and jump terms of the stochastic system for enhanced convergence and reduced switching frequency, inspired by~\cite[Section III.B]{liang2024switching} and~\cite[Theorem 2]{grigoletto2021stabilization}, we propose the state-dependent switching law ensuring GAS of the target state $\bar{x}$, where the switching stops in finite time almost surely. 
We consider the following three types of stability~\cite{khasminskii2011stochastic,mao2007stochastic}.
\begin{definition}\label{Def:Stability} 
The state $\bar{x}\in\mathbb{R}^{d\times d}$ is said to be 
\begin{enumerate}
    \item
\emph{stable in probability}, if for every pair $\varepsilon \in (0,1)$ and $r>0$, there exists $\delta = \delta(\varepsilon,r,t_0)>0$ such that,
\begin{equation*}
\mathbb{P} \big( |X_t-\bar{x}|<r \text{ for } t \geq 0 \big) \geq 1-\varepsilon,
\end{equation*}
whenever $|x_0-\bar{x}|<\delta$. 

\item almost surely \emph{globally asymptotically stable} (GAS), if it is stable in probability and,
\begin{equation*}
\mathbb{P} \left( \lim_{t\rightarrow\infty}|X_t-\bar{x}|=0 \Big|X_0=x\right) = 1, \quad \forall x\in\mathbb{R}^{d\times d}.
\end{equation*}

\item almost surely
\emph{globally exponentially stable} (GES), if
\begin{equation*}
\P\left(\limsup_{t \rightarrow \infty} \frac{1}{t} \log |X_t-\bar{x}| < 0\Big|X_0=x\right)=1,\quad \forall x\in\mathbb{R}^{d\times d}.
\end{equation*}
The left-hand side of the above inequality is called the \emph{sample Lyapunov exponent}.
\end{enumerate}
\end{definition}

\subsection{Design of switching rules and stability analysis}  
\label{sec:switchingrules_SDE}

Define $\Lambda_l:=\{x\in\mathbb{R}^{d\times d}: |x|<l\}$ with $l>0$ and $\bar{\Lambda}_l$ the closure of $\Lambda_l$. Denote by $\mathcal{A}_k$ the infinitesimal generator of the $k$-subsystem
$$\d X_t= f_{k}(X_{t-})\d t+g_k(X_{t-})\d W^k_t+\int_{|x|\leq c}h_k(X_{t-},x)\tilde{N}^k(\d t,\d x).$$  
We make the following control hypothesis ensuring the hysteresis property.
\begin{assumption}
Let $\mathcal K$ be the family of all continuous non-decreasing functions $\mu:\mathbb{R}_{+}\rightarrow\mathbb{R}_{+}$ such that $\mu(0)=0$ and $\mu(r)>0$ for all $r>0$. 
 \begin{itemize}
\item[\textbf{\emph{H1}}:] There exist  functions $\mu_1,\mu_2,\nu\in\mathcal{K}$, and $\mathsf{V}\in\mathcal{C}^2(\mathbb{R}^{d\times d},\mathbb{R}_+)$ such that $\mathsf{V}(x)=0$ if and only if $x=\bar{x}$,  and a constant $l>0$ and $\mathbf{j}\in[m]$ such that $\mu_1(|x|)\leq \mathsf{V}(x)\leq \mu_2(|x|)$ and $\mathcal{A}_{\mathbf{j}}\mathsf{V}(x)\leq -\nu(|x|)$ for all $x\in\bar{\Lambda}_l$.
\item[\textbf{\emph{H2}}:] There exist a constant $\delta>0$ and $V\in\mathcal{C}^2(\mathbb{R}^{d\times d},\mathbb{R}_+)$ such that $V(x)=0$ if and only if $x=\bar{x}$, and $\mathbf{A}_V(x):=\min_{k\in [m]}\mathcal{A}_k V(x)<-\delta$ for all $x\notin \Lambda_{l^*-\epsilon}$, where  $l^*:=\mu_2^{-1}\circ\mu_1(l)\in(0,l]$ and $\epsilon\in(0,l^*)$.
\end{itemize}   
\end{assumption} 
If \textbf{H1} is satisfied, it is straightforward to deduce that $\bar{x}$ is a trivial solution for $\mathbf{j}$-th subsystem, i.e., $f_{\mathbf{j}}(\bar{x})=g_{\mathbf{j}}(\bar{x})=\int_{|z|\leq c}h_{\mathbf{j}}(\bar{x},x)\d x=0$. The assumption \textbf{H1} is the standard sufficient condition ensuring the stability of the target state $\bar{x}$ in probability relative to the domain $\Lambda_l$~\cite[Theorem 2.2]{mao2007stochastic}, and the hypothesis \textbf{H2} ensures the attractivity of the neighborhood of $\bar{x}$ determined by \textbf{H1}. Consequently, the solution of the switched system is non-explosive~\cite[Chapter 3.4]{khasminskii2011stochastic}. This establishes a foundation for the application of the classical hysteresis switching technique~\cite{morse1992applications}. 

Suppose that \textbf{H1} and \textbf{H2} hold. Inspired by the scale-independent hysteresis switching logic~\cite[Chapter 6]{liberzon2003switching}, for all $k\in[m]$, we define the regions
\begin{equation}
\Theta^{l^*-\epsilon}_k:=\big\{x\in \Lambda^c_{l^*-\epsilon}: \,\mathcal{A}_k V(x)< r \mathbf{A}_V(x)\big\},
\label{Eq:Region_2}
\end{equation}
where the constants $r\in(0,1)$ and $\epsilon\in(0,l^*)$ are used to control the dwell-time and the number of switches. 
Then, \textbf{A1} is satisfied, that is
$
\mathbb{R}^{d\times d}\subset\Lambda_{l}\cup\bigcup_{k\in[m]}\Theta^{l^*-\epsilon}_k.
$
Otherwise, there exists a $x\in\mathbb{R}^{d\times d}\setminus\Lambda_{l^*-\epsilon}$ such that $\mathcal{A}_k V(x)\geq r \mathbf{A}_V(x)$ for all $k\in[m]$, which leads to a contradiction since $\mathbf{A}_V(x)<0$. 

Based on \textbf{H1}, \textbf{H2} and Assumption \ref{ass:partition}, we define the following switching law $\sigma_1$.
\begin{definition}[Switching law $\sigma_1$]
For any initial state $x_0\in \mathbb{R}^{d\times d}\setminus \bar{x}$, set $\tau_0:=0$ and 
\begin{equation*}
\begin{split}
&p_0:=
\begin{cases}
\arg\min_{k\in[m]}\mathcal{A}_kV(x_0),& \text{if }x_0\in \mathbb{R}^{d\times d}\setminus \Lambda_{l^*-\epsilon};\\
\mathbf{j},& \text{if }x_0\in \Lambda_{l^*-\epsilon}\setminus \bar{x},
\end{cases}\\
&u^{p_0}_0 := \mathds{1}_{\{k=p_{0}\}}.
\end{split}
\end{equation*}
Then, set for all  $n\ge 0$
\begin{equation*}
\begin{split}
&\tau_{n+1}:=
\begin{cases}
\inf\{t\geq \tau_n:\,X_t\notin \Theta^{l^*-\epsilon}_{p_n}\},& \text{if }X_{\tau_n}\in \mathbb{R}^{d\times d}\setminus \Lambda_{l*-\epsilon};\\
\inf\{t\geq \tau_n:\,X_t\notin\Lambda_l\},& \text{if }X_{\tau_n}\in \Lambda_{l^*-\epsilon},
\end{cases}\\
&p_{n+1}:=
\begin{cases}
\arg\min_{j\in[m]}\mathcal{A}_jV(X_{\tau_{n+1}}),& \text{if }X_{\tau_{n+1}}\in \mathbb{R}^{d\times d}\setminus \Lambda_{l*-\epsilon};\\
\mathbf{j},& \text{if }X_{\tau_{n+1}}\in \Lambda_{l^*-\epsilon},
\end{cases}\\
&u^{k}_{\tau_{n+1}} := \mathds{1}_{\{k=p_{n+1}\}}, \quad \forall k\in[m].
\end{split}
\end{equation*}
Under Assumption \ref{ass:partition}, $\tau_{n+1}>\tau_n$ almost surely for all $n\ge 0$. 
\end{definition}

In the context of the system~\eqref{Eq:SDE} under the switching law $\sigma_1$, we make the following assumption for a specific scenario based on Assumption \ref{ass:partition}.
Consider the case where $\mathbf{j}=m$ and 
$\mathbb{R}^d\subset\Lambda_{l}\cup\bigcup_{k\in[m-1]}\Theta^{l^*-\epsilon}_k$.
\begin{assumption}\label{ass:partition2}
For $k\in[m-1]$, 
$$X + h_k\bigl(X,x\bigr)\in \Theta^{l^*-\epsilon}_k,\quad\forall\,|x|\le c$$ 
holds if $X\in \Theta^{l^*-\epsilon}_k$, and 
$$X + h_k\bigl(X,x\bigr)\in \Lambda_l,\quad\forall\,|x|\le c$$ 
holds if $X\in \Lambda_{l^*-\epsilon}$.    
\end{assumption}


Inspired by~\cite[Lemma 4.10]{mirrahimi2007stabilizing} and~\cite[Theorem 6.3]{liang2019exponential}, we can conclude the following stochastic analog of practical stability of the target state $\bar{x}$.  

\begin{lemma}
Consider the switched system~\eqref{Eq:SDE} under switching law $\sigma_1$. Suppose that \emph{\textbf{H1}}, \emph{\textbf{H2}} and Assumption \ref{ass:partition} (or Assumption \ref{ass:partition2}) are satisfied. For any initial state $x_0\in\mathbb{R}^d$, the switch occurs only finite times for almost each sample path, then the trajectories will stay in $\Lambda_l$ and never exit and $\mathbf{j}$-subsystem will be active afterwards almost surely.
\label{Lemma:NeverExit}
\end{lemma}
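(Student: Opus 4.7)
The plan is to show that $V(X_t)$ is a non-negative supermartingale under the closed-loop law $\sigma_1$, then combine an occupation-time bound with an optional-stopping escape estimate. I would begin by applying Itô's formula to $V(X_t)$ on each dwell interval $[\tau_n,\tau_{n+1})$ and checking the sign of the drift in the two possible regimes: when $X_{\tau_n}\in\Lambda_{l^*-\epsilon}$ the active generator is $\mathcal{A}_{\mathbf{j}}$ and hypothesis \textbf{H1} gives $\mathcal{A}_{\mathbf{j}}V(X_t)\le -\nu(|X_t|)\le 0$ as long as $X_t\in\bar\Lambda_l$; when $X_{\tau_n}\notin\Lambda_{l^*-\epsilon}$ the switching rule forces $X_t\in\Theta^{l^*-\epsilon}_{p_n}$ throughout the interval, so the defining inequality of the region combined with \textbf{H2} yields the uniform bound $\mathcal{A}_{p_n}V(X_t)<r\,\mathbf{A}_V(X_t)<-r\delta$. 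Hence $V(X_t)$ is a non-negative local supermartingale, and by the standard localization together with the supermartingale convergence theorem it converges a.s.\ to some $V_\infty\ge 0$.

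Second, the uniform drift bound outside $\Lambda_{l^*-\epsilon}$ gives an occupation-time estimate. Writing the Itô expansion up to a localizing time and taking expectations,
\[
\E[V(X_t)]\;\le\;V(X_0)-r\delta\,\E\!\left[\int_0^t \id_{\{X_s\notin\Lambda_{l^*-\epsilon}\}}\d s\right],
\]
and letting $t\to\infty$ using $V\ge 0$ yields $\E\!\left[\int_0^\infty \id_{\{X_s\notin\Lambda_{l^*-\epsilon}\}}\d s\right]\le V(X_0)/(r\delta)<\infty$. So the occupation time outside $\Lambda_{l^*-\epsilon}$ is a.s.\ finite, which forces the trajectory to enter $\Lambda_{l^*-\epsilon}$ within a finite random time.

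Third, from any entry point $x\in\Lambda_{l^*-\epsilon}$ the $\mathbf{j}$-subsystem activates, and I bound the probability of subsequently exiting $\Lambda_l$. The choice $l^*=\mu_2^{-1}\circ\mu_1(l)$ and $\epsilon\in(0,l^*)$ gives $V(x)\le\mu_2(l^*-\epsilon)<\mu_2(l^*)=\mu_1(l)$, while Assumption~\ref{ass:partition2} guarantees that an exit from $\Lambda_l$ under the $\mathbf{j}$-subsystem occurs continuously, so at the exit time $T$ we have $V(X_T)\ge\mu_1(l)$. Optional stopping on the supermartingale $V(X_{t\wedge T})$ then gives
\[
\P(T<\infty\mid X_0=x)\;\le\;\frac{V(x)}{\mu_1(l)}\;\le\;\frac{\mu_2(l^*-\epsilon)}{\mu_1(l)}\;=:\;\pi\;<\;1.
\]
Applying the strong Markov property at successive entries to $\Lambda_{l^*-\epsilon}$, a geometric / Borel--Cantelli argument gives a.s.\ only finitely many exits from $\Lambda_l$. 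After the last such exit the trajectory remains trapped in $\Lambda_l$, eventually re-enters $\Lambda_{l^*-\epsilon}$ by Step~2, and then the $\mathbf{j}$-subsystem stays active indefinitely, giving the second and third parts of the lemma.

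Finally, for the finite-switches claim, the cycles treated above are finite in number, and the switches of the first type (type-(ii), exits from $\Lambda_l$) have already been counted by the excursion argument. The remaining obstacle is to rule out accumulation of type-(i) switches — successive exits of the $\Theta^{l^*-\epsilon}_k$ within a single outside-excursion. This is the main technical point and precisely the purpose of the hysteresis parameter $r\in(0,1)$: immediately after a switch one has the strict gap $\mathcal{A}_{p_{n+1}}V(X_{\tau_{n+1}})=\mathbf{A}_V(X_{\tau_{n+1}})<r\,\mathbf{A}_V(X_{\tau_{n+1}})$, and combined with Assumption~\ref{ass:partition2} (so switches are triggered only by the continuous part of $X$) the continuity of $x\mapsto\mathcal{A}_k V(x)-r\mathbf{A}_V(x)$ produces a (random but) strictly positive dwell time at each switch. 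Together with the finite total occupation outside $\Lambda_{l^*-\epsilon}$, this rules out a Zeno accumulation and yields finitely many switches a.s.
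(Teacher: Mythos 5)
Your Steps 3--4 (escape estimate via optional stopping with the ratio $\mu_2(l^*-\epsilon)/\mu_1(l)<1$, strong Markov plus Borel--Cantelli for finitely many excursions, and the hysteresis/overlap remark against Zeno behaviour) are essentially the paper's argument. The genuine gap is in your Steps 1--2: you treat \textbf{H1} and \textbf{H2} as statements about a \emph{single} function and claim that this one function is a nonnegative (local) supermartingale along the closed loop. The hypotheses provide two different functions: $\mathsf{V}$ in \textbf{H1}, which satisfies $\mathcal{A}_{\mathbf{j}}\mathsf{V}\le -\nu\le 0$ only on $\bar\Lambda_l$ and about which nothing is assumed under the other subsystems or outside $\bar\Lambda_l$, and $V$ in \textbf{H2}, for which $\mathbf{A}_V<-\delta$ is assumed only outside $\Lambda_{l^*-\epsilon}$ and about which nothing is assumed inside (in particular $\mathcal{A}_{\mathbf{j}}V$ may be positive there). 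Consequently neither function is a supermartingale along the whole trajectory, and your occupation-time inequality
\[
\E\big[V(X_t)\big]\;\le\;V(X_0)-r\delta\,\E\Big[\int_0^t \id_{\{X_s\notin\Lambda_{l^*-\epsilon}\}}\,\d s\Big]
\]
does not follow: during the (possibly infinite) time spent in $\Lambda_{l^*-\epsilon}$ under the $\mathbf{j}$-subsystem the drift of $V$ is uncontrolled, so the bound $\E\int_0^\infty \id_{\{X_s\notin\Lambda_{l^*-\epsilon}\}}\d s\le V(X_0)/(r\delta)$ is unjustified. Assuming the two functions coincide is exactly the common-Lyapunov-function hypothesis that this lemma is designed to avoid; the coupling between the two hypotheses is only through the threshold $l^*=\mu_2^{-1}\circ\mu_1(l)$.

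The repair is the excursion decomposition used in the paper: introduce the alternating stopping times $\mathfrak{t}_n$ (entry into $\Lambda_{l^*-\epsilon}$) and $\mathfrak{s}_n$ (exit from $\Lambda_l$), apply It\^o to the \textbf{H2}-function $V$ only on the intervals $[\mathfrak{s}_n\wedge t,\mathfrak{t}_{n+1}\wedge t]$, where the state stays outside $\Lambda_{l^*-\epsilon}$ and the switching law guarantees drift at most $-r\delta$, to conclude $\P(\mathfrak{t}_{n+1}=\infty,\ \mathfrak{s}_n<\infty)=0$ (each outside excursion ends); and apply It\^o to the \textbf{H1}-function $\mathsf{V}$ on the intervals $[\mathfrak{t}_n,\mathfrak{s}_n)$, where only the $\mathbf{j}$-subsystem is active and the state is in $\bar\Lambda_l$, to get $\P(\mathfrak{s}_n<\infty\,|\,\mathfrak{t}_n<\infty)\le\alpha:=\mu_2(l^*-\epsilon)/\mu_1(l)<1$, which is your Step 3. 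Your remaining arguments then go through unchanged. A further small caution: even granting positive dwell times after each switch, positive random dwell times alone do not preclude their accumulation; the non-Zeno conclusion should be tied to the strict gap $r\in(0,1)$ together with the no-crossing-jump assumption, as in the paper.
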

\begin{proof}
  The proof consists of the following three steps:
\begin{enumerate}
\item We show that for all $x_0\in\Lambda_{l^*-\epsilon}$, the probability of $X_t$ exiting $\Lambda_l$ is strictly less than one.
\item We show that, for all sample path that $X_t$ exits $\Lambda_l$ in finite time, $X_t$ can enter $\Lambda_{l^*-\epsilon}$ again in finite time.
\item We show that, the switch occurs only finite times for almost all sample path, then the trajectories will stay in $\Lambda_l$ and never exit and $\mathbf{j}$-subsystem will be active afterwards almost surely.
\end{enumerate}
\noindent \emph{Step 1.} Assumption \textbf{H1} implies that $V(x)\geq \mu_1(|x|)$ for all $x\in\bar{\Lambda}_l$ and 
\begin{equation*}
\mu_1(l^*-\varepsilon)\le \sup_{x\in\Lambda_{l^*-\epsilon}}V(x)\leq \mu_2(l^*-\epsilon)=\alpha \mu_1(l), \quad \text{with }\alpha= \mu_2(l^*-\epsilon)/\mu_1(l)\in(0,1),
\end{equation*}
where $l^*=\mu^{-1}_2\circ\mu_1(l)\in(0,l]$ and $\epsilon\in(0,l^*)$.
Set $x_0\in\Lambda_{l^*-\epsilon}$ and denote the first exit time from $\Lambda_l$ by $T_l:=\inf\{t\geq t_0: X_t\notin \Lambda_l \}$. Due to It\^o formula and the definition of switching law, we have 
\begin{equation*}
\mathbb{E}\big[V(X_{T_l\wedge t})\big]= V(x_0)+\mathbb{E}\left[\int^{T_l\wedge t}_{t_0} \mathcal{A}_\mathbf{j}V(X_s) \d s\right] \leq V(x_0) \leq \alpha \mu_1(l).
\end{equation*}
Moreover, 
$\mathbb{E}\big[V(x(T_l\wedge t))\big]\geq \mathbb{E}\big[\mathds{1}_{\{T_l\leq t\}} V(x(T_l)\big]\geq \mu_1(l)\mathbb{P}(T_l\leq t)$. Thus, $\mathbb{P}(T_l\leq t) \leq \alpha<1$. Letting $t\to\infty$, we obtain $\mathbb P(\exists t\ge  t_0 \mbox{ such that } X_t\notin \Lambda_l)=\mathbb{P}(T_l<\infty) \leq \alpha$. Hence,
\begin{equation*}
\mathbb{P}(X_t\in \Lambda_l,\, \forall t\geq t_0) \geq 1-\alpha>0,\quad \forall x_0\in\Lambda_{l^*-\epsilon}.
\end{equation*}
\emph{Step 2.} Define two sequences of stopping times, $\mathfrak{s}_0:= t_0$ and for all $n\geq 1$
\begin{equation*}
\mathfrak{t}_n:=\inf\{t\geq \mathfrak{s}_{n-1}:\, X_t\in \Lambda_{l^*-\epsilon}\}, \quad \mathfrak{s}_n:=\inf\{t\geq \mathfrak{t}_n:\, X_t\notin \Lambda_{l}\}.
\end{equation*}
 Due to \textbf{H2}, by applying the It\^o formula, for all $n\in\mathbb{N}$ and $t> 0$, we have
\begin{equation*}
\begin{split}
\mathbb{E}\big[V(X_{\mathfrak{t}_{n+1}\wedge t}) \big]-\mathbb{E}\big[V(X_{\mathfrak{s}_{n}\wedge t}) \big]&=\mathbb{E} \left[\int^{\mathfrak{t}_{n+1}\wedge t}_{\mathfrak{s}_{n}\wedge t} \sum^m_{k=1}u^k_s \mathcal{A}_{k}V(X_s))\d s\right]\\
&\leq -\delta \mathbb{E}[\mathfrak{t}_{n+1}\wedge t-\mathfrak{s}_{n}\wedge t].
\end{split}
\end{equation*}
Since $\mathfrak{t}_{n+1}\wedge t\geq \mathfrak{s}_{n}\wedge t$ almost surely and $V(x)$ is bounded by a constant $\mathfrak{l}>0$ for all $x\in\bar{\Lambda}_{l}$ due to the compactness and the continuity of $V$, we obtain
\begin{equation*}
\mathbb{E}\big[(\mathfrak{t}_{n+1}\wedge t-\mathfrak{s}_{n}\wedge t) \mathds{1}_{\{\mathfrak{s}_{n}< t \leq \mathfrak{t}_{n+1}\}} \big]\leq \mathbb{E}[\mathfrak{t}_{n+1}\wedge t-\mathfrak{s}_{n}\wedge t] \leq \frac{\mathfrak{l}}{\delta},
\end{equation*}
which implies 
\begin{equation*}
\mathbb{P}(\mathfrak{s}_{n}< t \leq \mathfrak{t}_{n+1})\leq \mathbb{E}\left[\frac{\mathfrak{s}_{n}\wedge t}{t}\mathds{1}_{\{\mathfrak{s}_{n}< t\}}\right] + \frac{\mathfrak{l}}{\delta t},
\end{equation*}
where $\frac{\mathfrak{s}_{n}\wedge t}{t}\mathds{1}_{\{\mathfrak{s}_{n}< t\}}<1$ almost surely. Letting $t\to\infty$, the dominated convergence theorem yields
$
\mathbb{P}(\mathfrak{t}_{n+1}=\infty,  \mathfrak{s}_{n}< \infty)=0. 
$
It implies that $\mathbb{P}(\mathfrak{t}_{n+1}<\infty|\mathfrak{s}_{n}< \infty)=1$ if  $\mathbb{P}(\mathfrak{s}_{n}< \infty)>0$. If $\mathbb{P}(\mathfrak{s}_{n}< \infty)=0$, the trajectory will never exit $\Lambda_l$.

\noindent \emph{Step 3.} If $\mathfrak{t}_{n} \leq t<\mathfrak{s}_{n}$, only $\mathbf{j}$-subsystem is active. Combining the strong Markov property\footnote{See~\cite[Proposition 3.7]{mirrahimi2007stabilizing} for the proof.} and \emph{Step 1}, we have
\begin{equation*}
\mathbb{P}(\mathfrak{s}_n<\infty|\mathfrak{t}_n<\infty)\leq \alpha<1.
\end{equation*}
By similar arguments as in~\cite[Lemma 4.10]{mirrahimi2007stabilizing}, and using Bayes' formula and the Borel-Cantelli lemma, we have
\begin{equation*}
\mathbb{P}(\mathfrak{s}_n<\infty \text{ for infinitely many }n)=0.
\end{equation*}
Thus, for almost each sample path, there exists an integer $M<\infty$ such that $\mathfrak{s}_n=\infty$ for all $n\geq M$, and $\mathfrak{s}_n<\infty$ for all $n<M$. Hence, for almost all sample paths, there are only finite switches between $\Lambda_l$ and $\{\Theta^{l^*-\epsilon}_k\}_{k\in[m]}$ and the $\mathbf{j}$-subsystem will be always active afterwards. Moreover, due to the non-empty overlap of each adjacent open region $\Theta^{l^*-\epsilon}_k$ with $k\in [m]$, only finite switches occur between $\Theta^{l^*-\epsilon}_k$. The proof is thus fulfilled.
\end{proof}

The main result of this section can be stated below.
\begin{theorem}
Suppose that \emph{\textbf{H1}}, \emph{\textbf{H2}} and  Assumption \ref{ass:partition} (or Assumption \ref{ass:partition2})  are satisfied. Then, for the switched system~\eqref{Eq:SDE} under the switching law $\sigma_1$, the switch occurs only finite times for almost all sample path, and the target state $\bar{x}$ is GAS in mean and almost surely.
\label{Theorem:GAS}
\end{theorem}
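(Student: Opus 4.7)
The plan is to combine Lemma~\ref{Lemma:NeverExit} with standard Lyapunov-based stochastic stability arguments. By that lemma, for almost every sample path $\omega$ there exists a finite random time $T^*(\omega)$ such that the $\mathbf{j}$-th subsystem is active for all $t\ge T^*(\omega)$ and $X_t(\omega)\in\bar\Lambda_l$. This reduces the asymptotic analysis on the full-measure event $\{T^*<\infty\}$ to a single (non-switched) jump-diffusion system confined to the bounded region $\bar\Lambda_l$ driven by $\mathcal{A}_{\mathbf{j}}$, for which \textbf{H1} directly supplies a local Lyapunov function $\mathsf{V}$.

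First I would establish \emph{stability in probability} of $\bar x$. For any initial state close to $\bar x$ (inside $\Lambda_{l^*-\epsilon}$), the switching law $\sigma_1$ selects $p_0=\mathbf{j}$, and the process remains under $\mathcal A_{\mathbf{j}}$ until it exits $\Lambda_l$. The classical Khasminskii argument then applies: stopping $\mathsf{V}(X_t)$ at $\tau_r:=\inf\{t:|X_t-\bar x|\ge r\}$ and using $\mathcal{A}_{\mathbf{j}}\mathsf{V}\le -\nu(|x|)\le 0$ on $\bar\Lambda_l$, Itô's formula gives $\E[\mathsf{V}(X_{\tau_r\wedge t})]\le \mathsf{V}(x_0)$; combined with the lower bound $\mu_1(|x-\bar x|)\le \mathsf{V}(x)$ this controls the exit probability, exactly as in Step~1 of the proof of Lemma~\ref{Lemma:NeverExit}.

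Next I would prove \emph{almost sure convergence}. On the event $\{T^*<\infty\}$, the process $(\mathsf{V}(X_t))_{t\ge T^*}$ is a bounded nonnegative supermartingale on $\bar\Lambda_l$ (the martingale part is $L^2$ since $\mathsf V\in \mathcal C^2$ and $\bar\Lambda_l$ is compact), and Itô's formula yields $\E\big[\int_{T^*}^\infty \nu(|X_s-\bar x|)\,\d s\big]<\infty$. Using that $\nu\in\mathcal K$ is strictly positive away from $0$ and that trajectories are càdlàg with bounded jumps, this forces $\liminf_{t\to\infty}|X_t-\bar x|=0$ almost surely. A standard hitting-time argument now splices stability in probability with this liminf statement: once $X_t$ visits a small $\delta$-ball around $\bar x$, the stability-in-probability estimate keeps it in an $r$-ball with probability $\ge 1-\eta$, and iterating with Borel--Cantelli upgrades the liminf to a genuine limit, giving $\lim_{t\to\infty}|X_t-\bar x|=0$ a.s.

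Finally, for the \emph{convergence in mean} statement, I would argue as follows: on $\{T^*\le M\}$, the tail $(|X_t-\bar x|)_{t\ge T^*}$ is bounded by $l+|\bar x|$, so bounded convergence yields $\E[|X_t-\bar x|\id_{\{T^*\le M\}}]\to 0$. The remaining piece $\E[|X_t-\bar x|\id_{\{T^*>M\}}]$ is handled by exploiting \textbf{H2}: under $\sigma_1$, whenever $X_s\notin\Lambda_{l^*-\epsilon}$ the active index $k$ satisfies $\mathcal A_k V(X_s)<r\mathbf A_V(X_s)\le -r\delta$, so Itô's formula on $V$ shows $V(X_t)$ is a supermartingale modulo the bounded contribution from the compact set $\Lambda_{l^*-\epsilon}$ (where $\mathcal A_{\mathbf j}V$ is bounded by continuity), hence $\sup_t\E[V(X_t)]<\infty$; combined with coercivity of $V$ this gives uniform tightness of $(X_t)_t$ and in particular $\P(T^*>M)\to 0$ as $M\to\infty$ uniformly, allowing us to conclude by Vitali's theorem. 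The main obstacle will be precisely this last step: controlling the pre-stabilization phase requires either a growth assumption on $V$ (coercivity) or a direct moment bound on $(X_t)$, and the cleanest route is to observe that H2, together with the boundedness of $\mathcal A_{\mathbf j}V$ on the compact set $\bar\Lambda_{l^*-\epsilon}$, yields a global supermartingale-type inequality for $V(X_t)$ up to an integrable error term.
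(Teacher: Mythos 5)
Your reduction to Lemma~\ref{Lemma:NeverExit} and your treatment of stability in probability match the paper; for almost sure convergence you replace the paper's appeal to Kushner's stochastic LaSalle invariance theorem (Theorem~\ref{Thm:LaSalle}) by a direct supermartingale argument ($\E\int\nu(|X_s-\bar x|)\,\d s<\infty$, hence $\liminf|X_t-\bar x|=0$, upgraded to a limit by the strong Markov/Borel--Cantelli splicing). That route is viable and more self-contained, but as written it has a technical flaw: $T^*$ is \emph{not} a stopping time (it is defined through the whole future of the path), so "$(\mathsf V(X_t))_{t\ge T^*}$ is a supermartingale on $\{T^*<\infty\}$" is not legitimate --- conditioning on a tail event destroys the supermartingale property. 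The correct formulation works with the stopping times $\mathfrak t_n$ (entry into $\Lambda_{l^*-\epsilon}$) and $\mathfrak s_n$ (exit from $\Lambda_l$): on $[\mathfrak t_n,\mathfrak s_n)$ only the $\mathbf j$-subsystem is active, the stopped process $\mathsf V(X_{t\wedge\mathfrak s_n})$ is a genuine supermartingale, and the desired conclusions are then read off on the events $\{\mathfrak s_n=\infty\}$, whose union has full probability by Lemma~\ref{Lemma:NeverExit}. This is exactly the role Kushner's theorem plays in the paper ("for almost all sample paths remaining in $Q_\alpha$"), so the fix is standard but necessary.

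The genuine gap is in your "convergence in mean" step, and you partly diagnose it yourself. \textbf{H2} gives no coercivity or growth condition on $V$ (it is only $\mathcal C^2$, vanishing exactly at $\bar x$, with $\mathbf A_V<-\delta$ off $\Lambda_{l^*-\epsilon}$), so the bound $\sup_t\E[V(X_t)]<\infty$ you extract from the supermartingale-plus-bounded-drift inequality does \emph{not} yield uniform integrability or tightness of $|X_t-\bar x|$, and Vitali's theorem cannot be invoked; moreover $\P(T^*>M)\to0$ alone does not control $\E[|X_t-\bar x|\mathds{1}_{\{T^*>M\}}]$ uniformly in $t$. The paper proves the in-mean statement in a weaker but sufficient form that avoids this entirely: having a.s.\ (equivalently, in-probability) convergence on the full-probability event $\Omega_l$ on which the trajectory remains in $\Lambda_l$, it uses only that $\mathsf V$ is bounded on the compact set $\bar\Lambda_l$ together with the estimate $\E[\mathsf V(X_t)\mid\Omega_l]\le\mu_2(l)\,\P(\mathsf V(X_t)>\varepsilon\mid\Omega_l)+\varepsilon$ and dominated convergence, to conclude $\E[\mathsf V(X_t)\mid\Omega_l]\to0$. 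If you adopt this interpretation of "GAS in mean" (convergence in mean of the bounded Lyapunov functional on $\Omega_l$, or of $|X_t-\bar x|\wedge(l+|\bar x|)$), your first splitting term already suffices and the problematic second term disappears; as proposed, the unconditional claim $\E|X_t-\bar x|\to0$ is not provable from \textbf{H1}--\textbf{H2} without an additional growth assumption on $V$ or a moment bound on $X$.
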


\begin{proof}
For the stochastic differential equation corresponding to the $\mathbf{j}$-subsystem, the solution is a strong Markov process, Feller continuous and stochastically continuous uniformly in $t$ and initial state~\cite{protter2004stochastic}. Define the event 
$$
\Omega_l:=\{\omega\in\Omega:\, X_t \text{ never exits }\Lambda_l \text{ and }\mathbf{j}\text{-subsystem is active}\},
$$
where $X$ is the solution of \eqref{Eq:SDE} under $\sigma$. 
Together with \textbf{H1}, all conditions of the stochastic LaSalle invariance theorem~\ref{Thm:LaSalle} are satisfied. Thus, for almost all $\omega\in\Omega_l$, $X_t$ converges in probability to $\bar{x}$ when $t\to\infty$. It implies
$
\lim_{t\rightarrow \infty}\mathbb{P}\big(\mathsf{V}(X_t)>\varepsilon\big|\Omega_l \big)=0, 
$ for all $\varepsilon>0$, where $\mathsf{V}$ is defined in \textbf{H1}. Due to the continuity of $\mathsf{V}(x)$ and the compactness of $\bar{\Lambda}_l$, there exists a constant $\mathfrak{l}>0$ such that $\mathsf{V}(x)\leq c$ for all $x\in\bar{\Lambda}_l$. Then, we deduce
\begin{equation*}
\mathbb{E}\big[\mathsf{V}(X_t)\big|\Omega_l\big]\leq \mu_2(l)\mathbb{P}\big(\mathsf{V}(x(t))>\varepsilon\big|\Omega_l \big)+\varepsilon\big(1-\mathbb{P}\big(\mathsf{V}(x(t))>\varepsilon\big|\Omega_l \big)\big),
\end{equation*}
which implies $\lim_{t\rightarrow \infty}\mathbb{E}\big[\mathsf{V}(x(t))\big|\Omega_l]=0$. By virtue of Theorem~\ref{Thm:LaSalle}, $\mathsf{V}(X_t)$ converges for almost all $\omega\in\Omega_l$. By using the dominated convergence theorem, we get $\mathbb{E}\big[\lim_{t\rightarrow \infty}\mathsf{V}(X_t)\big|\Omega_l \big]=0$. Then, $X_t$ converges to $\bar{x}$ when $t$ tends to infinity for almost all $\omega\in\Omega_l$. Moreover, due to Lemma~\ref{Lemma:NeverExit}, we have $\mathbb{P}(\Omega_l)=1$. Thus, $\lim_{t\to\infty}X_t=\bar{x}$  almost surely. In addition, the stability of $\bar{x}$ in probability is proved in \emph{Step~1} of Lemma~\ref{Lemma:NeverExit}, which completes the proof.
\end{proof}

Inspired by~\cite{liang2019exponential,applebaum2009asymptotic}, in Proposition~\ref{Proposition:GES}, we provide sufficient conditions ensuring almost sure GES of the target state $\bar{x}$ for the switched system~\eqref{Eq:SDE} under $\sigma$. To establish the groundwork for our results, we introduce the following assumption, ensuring that the target state is non-attainable in finite time almost surely.
\begin{assumption}\label{ass:partition3}
There exists a constant $\delta>0$ and $\mathbf{j}\in[m]$ such that $|x+h_{\mathbf{j}}(x,z)|\geq \delta |x|$ for all $x\in \Lambda_{l}$ and $|z|\leq c$.    
\end{assumption}
By applying the similar arguments as in\cite[Lemma 3.2]{applebaum2009asymptotic} and~\cite[Lemma 2.6]{chao2017almost}, we derive the following lemma, which enables us to work with the functions that are twice continuously differentiable in any neighbourhood of the target state $\bar{x}$.
\begin{lemma}\label{Lemma:NeverReach}
    Assume that \emph{\textbf{H1}} and Assumption \ref{ass:partition3} are satisfied. Then, for the $\mathbf{j}$-subsystem, the related trajectory $X_t$ cannot attain the target state $\bar{x}$ in finite time almost surely, i.e.,
    \begin{equation*}
        \mathbb{P}\big(X_t\neq \bar{x},\, \forall t\geq t_0\big)=1, \quad \forall x_0\neq \bar{x}.
    \end{equation*}
\end{lemma}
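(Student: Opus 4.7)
Without loss of generality I set $\bar{x}=0$ and fix $x_0\in\Lambda_l\setminus\{0\}$; the case $|x_0|\ge l$ reduces to this via a strong Markov restart at the first entry of $X$ into $\Lambda_l$. Following the Lyapunov technique of Applebaum's Lemma~3.2 and Chao's Lemma~2.6, the plan is to introduce an auxiliary function $W\in\mathcal{C}^2(\Lambda_l\setminus\{0\},\mathbb{R}_+)$ that diverges at $\bar{x}$ and to show via It\^o's formula that $\mathbb{E}[W(X_t)]$ grows at most linearly in~$t$, which rules out finite-time attainment of $\bar{x}$.

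Concretely, I would take $W(x):=-\log\mathsf{V}(x)$, which is $\mathcal{C}^2$ on $\Lambda_l\setminus\{0\}$ and tends to $+\infty$ as $x\to 0$ because $\mathsf{V}(x)\ge\mu_1(|x|)$. For each $n\in\mathbb{N}$ I introduce the stopping times
\[
\tau_n:=\inf\{t\ge t_0:\mathsf{V}(X_t)\le 1/n\},\qquad \sigma:=\inf\{t\ge t_0:X_t\notin\Lambda_l\},
\]
so that on $[t_0,\tau_n\wedge\sigma)$ the trajectory stays in a compact annulus where $W$ is smooth. Applying It\^o's formula to $W(X_{t\wedge\tau_n\wedge\sigma})$ and computing $\mathcal{A}_{\mathbf{j}} W$, one obtains a drift--diffusion contribution of the form $-\mathcal{A}^{c}_{\mathbf{j}}\mathsf{V}/\mathsf{V}+|g_{\mathbf{j}}^{\top}\nabla\mathsf{V}|^2/(2\mathsf{V}^2)$ and a jump contribution $\int_{|z|\le c}\bigl[\log(\mathsf{V}(x)/\mathsf{V}(x+h_{\mathbf{j}}(x,z)))+\nabla\mathsf{V}(x)\cdot h_{\mathbf{j}}(x,z)/\mathsf{V}(x)\bigr]\,dz$, where $\mathcal{A}^{c}_{\mathbf{j}}$ denotes the continuous part of $\mathcal{A}_{\mathbf{j}}$.

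The crux is to establish the uniform bound $\mathcal{A}_{\mathbf{j}} W(x)\le K$ on $\Lambda_l\setminus\{0\}$ for some constant $K$. For the drift--diffusion terms, I combine \textbf{H1}'s inequality $\mathcal{A}_{\mathbf{j}}\mathsf{V}\le 0$ with the observation (noted right after \textbf{H1}) that $f_{\mathbf{j}}(\bar{x})=g_{\mathbf{j}}(\bar{x})=0$, together with the $\mathcal{C}^2$ smoothness of $\mathsf{V}$ (which forces $\nabla\mathsf{V}(\bar{x})=0$) and the quadratic-type lower bound on $\mathsf{V}$ near $\bar{x}$; a Taylor expansion then yields $|\mathcal{A}^{c}_{\mathbf{j}}\mathsf{V}|\le C\mathsf{V}$ and $|g_{\mathbf{j}}^{\top}\nabla\mathsf{V}|^2\le C\mathsf{V}^2$. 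For the jump integrand, Assumption~\ref{ass:partition3} gives $\mathsf{V}(x+h_{\mathbf{j}}(x,z))\ge\mu_1(\delta|x|)$, which combined with $\mathsf{V}(x)\le\mu_2(|x|)$ keeps the log-ratio bounded near $\bar{x}$; the compensator is controlled by the same Taylor argument. Given $\mathcal{A}_{\mathbf{j}} W\le K$, taking expectations produces $\mathbb{E}[W(X_{t\wedge\tau_n\wedge\sigma})]\le W(x_0)+K(t-t_0)$. Since $W(X_{\tau_n})\ge\log n$ on $\{\tau_n\le t\wedge\sigma\}$, Markov's inequality forces $\mathbb{P}(\tau_n\le t\wedge\sigma)\to 0$ as $n\to\infty$, so $\tau_\infty:=\lim_n\tau_n=\inf\{t\ge t_0:X_t=\bar{x}\}$ satisfies $\mathbb{P}(\tau_\infty<\sigma)=0$. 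Patching via strong Markov at successive excursions of $X$ out of and back into $\Lambda_l$ then upgrades this to $\mathbb{P}(\tau_\infty<\infty)=0$, which is the claim.

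The main obstacle will be the jump estimate: without Assumption~\ref{ass:partition3} a single jump could drop $X_t$ arbitrarily close to $\bar{x}$, making $\log(\mathsf{V}(x)/\mathsf{V}(x+h_{\mathbf{j}}))$ unbounded above and defeating the compensator $\nabla\mathsf{V}\cdot h_{\mathbf{j}}/\mathsf{V}$. The lower bound $|x+h_{\mathbf{j}}(x,z)|\ge\delta|x|$ precisely forbids such jumps, and pairing it with the two-sided $\mathcal{K}$-function control of $\mathsf{V}$ near the origin is the decisive ingredient for closing the estimate.
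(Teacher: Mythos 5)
Your overall strategy (an auxiliary function blowing up at $\bar{x}$, It\^o's formula up to localized stopping times, then a Markov-inequality/limit argument) is the right kind of argument, and it is indeed the Applebaum--Siakalli/Chao route that the paper invokes. However, there is a genuine gap in the key estimate: you choose $W=-\log\mathsf{V}$ and claim $\mathcal{A}_{\mathbf{j}}W\le K$ on $\Lambda_l\setminus\{\bar{x}\}$ by appealing to ``the quadratic-type lower bound on $\mathsf{V}$ near $\bar{x}$''. No such bound is available: \textbf{H1} only gives $\mu_1(|x|)\le\mathsf{V}(x)\le\mu_2(|x|)$ with $\mu_1,\mu_2$ arbitrary class-$\mathcal{K}$ functions, and $\mathcal{C}^2$ smoothness of $\mathsf{V}$ with $\mathsf{V}(\bar{x})=0$ yields only the \emph{upper} quadratic bound $\mathsf{V}(x)\le C|x-\bar{x}|^2$ and $|\nabla\mathsf{V}(x)|^2\le C\,\mathsf{V}(x)$, never a lower one ($\mathsf{V}$ may vanish to arbitrarily high order). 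Consequently none of the ratio estimates you need follow: $|g_{\mathbf{j}}^{\top}\nabla\mathsf{V}|^2\le C|x-\bar{x}|^2\mathsf{V}(x)$ is not $\le C\mathsf{V}^2$, $|\mathcal{A}^{c}_{\mathbf{j}}\mathsf{V}|\le C\mathsf{V}$ fails for the same reason (and \textbf{H1} gives only the one-sided bound $\mathcal{A}_{\mathbf{j}}\mathsf{V}\le-\nu$, which pushes $-\mathcal{A}^{c}_{\mathbf{j}}\mathsf{V}/\mathsf{V}$ \emph{up}, not down), and for the jump part the quantity $\log\big(\mathsf{V}(x)/\mathsf{V}(x+h_{\mathbf{j}}(x,z))\big)\le\log\big(\mu_2(|x|)/\mu_1(\delta|x|)\big)$ is not bounded near $\bar{x}$ unless $\mu_1,\mu_2$ are comparable (e.g.\ $\mu_2(r)=r$, $\mu_1(r)=r^4$ makes it blow up). Tellingly, where the paper itself needs exactly this kind of ratio control (Proposition~\ref{Proposition:GES}) it \emph{assumes} a polynomial lower bound $c_1|x-\bar{x}|^{c_2}\le\mathsf{V}(x)$; the present lemma does not, so your estimate cannot close under \textbf{H1} alone.

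The repair is to abandon $\mathsf{V}$ as the test function and work with the distance itself, as in the cited Lemma~3.2 of Applebaum--Siakalli and the paper's quantum analogue Lemma~\ref{Lemma:NeverReach_SME}: take $U(x)=|x-\bar{x}|^{-\theta}$ (or $1/$distance-type functions). Since $f_{\mathbf{j}}(\bar{x})=g_{\mathbf{j}}(\bar{x})=0$ (a consequence of \textbf{H1}) and the coefficients satisfy the standing Lipschitz hypothesis, one has $|f_{\mathbf{j}}(x)|,\|g_{\mathbf{j}}(x)\|\le L|x-\bar{x}|$, and Assumption~\ref{ass:partition3} gives $|x+h_{\mathbf{j}}(x,z)-\bar{x}|\ge\delta|x-\bar{x}|$, so the jump increment of $U$ is controlled by $\delta^{-\theta}U$; this yields a \emph{relative} bound $\mathcal{A}_{\mathbf{j}}U\le\gamma U$ rather than your constant bound. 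Applying It\^o's formula to $e^{-\gamma t}U(X_t)$ up to the stopping times $\tau_{\zeta}:=\inf\{t\ge t_0:|X_t-\bar{x}|\le\zeta\}$ and letting $\zeta\to0$ then gives $\mathbb{P}(\tau_\zeta\le t)\to0$, exactly as in the proof of Lemma~\ref{Lemma:NeverReach_SME}; your final Markov-inequality and excursion-patching steps can be kept essentially unchanged once this estimate is in place.
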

The following result addresses the GES of the target state.
\begin{proposition}
Consider the switched system~\eqref{Eq:SDE} under $\sigma$. 
There exists $\mathsf{V}\in\mathcal{C}^2(\mathbb{R}^{d\times d},\mathbb{R}_+)$ such that $\mathsf{V}(x)=0$ if and only if $x=\bar{x}$. Moreover, suppose that there exist $\mathbf{j}\in[m]$,   $c_1,c_2,c_3,l\in (0,\infty)$ and  $c_4,c_5\in\R_+$ and $\mu\in \mathcal{K}$ such that, for all $x\in\bar{\Lambda}_l$, 
\begin{enumerate}
\item[\emph{(i)}] $c_1|x-\bar{x}|^{c_2}\leq \mathsf{V}(x)\leq \mu(|x|)$,
\item[\emph{(ii)}] $\mathcal{A}_{\mathbf{j}}\mathsf{V}(x)\leq -c_3 \mathsf{V}(x)$,
\item[\emph{(iii)}] $\liminf_{x\rightarrow\bar{x}}|\mathfrak{D}_x \mathsf{V}\cdot g_{\mathbf{j}}(x)|^2/\mathsf{V}(x)^2\geq c_4 $,
\item[\emph{(iv)}] $\limsup_{x\rightarrow\bar{x}}\int_{|z|\leq c}\mathfrak{V}_{\mathbf{j}}(x,z)\nu(dz)\leq -c_5$,
\end{enumerate}
where $\mathfrak{V}_{\mathbf{j}}(x,z)=\log\Big(\frac{\mathsf{V}\big(x+h_{\mathbf{j}}(x,z)\big)}{\mathsf{V}(x)}\Big)+1-\frac{\mathsf{V}\big(x+h_{\mathbf{j}}(x,z)\big)}{\mathsf{V}(x)}$.
Additionally, assume that for all bounded sets $\mathcal{M}\subset\mathbb{R}^d$,  
\begin{equation}\label{Eq:Cond_h}
    \sup_{x\in\mathcal{M}}\sup_{0<|z|<c}|h_{\mathbf{j}}(x,z)|<\infty,
\end{equation}
and suppose that \emph{\textbf{H2}} with $l^*=\mu^{-1}(c_1 l^{c_2})$, Assumption \ref{ass:partition} (or Assumption \ref{ass:partition2})  and \ref{ass:partition3} hold true. Then, $\bar{x}$ is GES almost surely with the sample Lyapunov exponent less than or equal to $-(2c_3+c_4+2c_5)/2c_2$.
\label{Proposition:GES}
\end{proposition}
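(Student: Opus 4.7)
The plan is to leverage Theorem~\ref{Theorem:GAS} to reduce, after a finite random time, to an unswitched $\mathbf{j}$-subsystem inside a small neighborhood of $\bar{x}$, and then extract the exponential rate from It\^o's formula applied to $\log\mathsf{V}(X_t)$. First, observe that conditions (i)--(ii) imply \textbf{H1} with $\mu_1(r)=c_1 r^{c_2}$, $\mu_2=\mu$ and $\nu(r)=c_3\mu_1(r)$, while the stated choice $l^{*}=\mu^{-1}(c_1 l^{c_2})=\mu_2^{-1}\!\circ\mu_1(l)$ matches \textbf{H2}; Theorem~\ref{Theorem:GAS} therefore applies and yields a finite random time $T$ after which the switching law $\sigma$ stops, only the $\mathbf{j}$-subsystem is active, the trajectory is confined to $\Lambda_l$, and $X_t\to\bar{x}$ almost surely. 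Combined with Assumption~\ref{ass:partition3}, Lemma~\ref{Lemma:NeverReach} ensures that $X_t\neq\bar{x}$ for all $t\ge 0$ almost surely, so $\log\mathsf{V}(X_t)$ is well defined throughout.

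Next I would apply It\^o's formula for jump-diffusions to $\log\mathsf{V}(X_t)$ on $[T,\infty)$, obtaining
\begin{equation*}
\log\mathsf{V}(X_t)=\log\mathsf{V}(X_T)+\int_T^t\Psi_{\mathbf{j}}(X_s)\,\d s + M^c_t+M^d_t,
\end{equation*}
with
\begin{equation*}
\Psi_{\mathbf{j}}(x)=\frac{\mathcal{A}_{\mathbf{j}}\mathsf{V}(x)}{\mathsf{V}(x)}-\frac{|\mathfrak{D}_x\mathsf{V}(x)\cdot g_{\mathbf{j}}(x)|^2}{2\,\mathsf{V}(x)^2}+\int_{|z|\le c}\mathfrak{V}_{\mathbf{j}}(x,z)\,\d z,
\end{equation*}
and $M^c,M^d$ the local martingales driven respectively by $W^{\mathbf{j}}$ (with integrand $\mathfrak{D}_x\mathsf{V}\cdot g_{\mathbf{j}}/\mathsf{V}$) and by the compensated Poisson measure $\tilde N^{\mathbf{j}}$ (with integrand $\log(\mathsf{V}(X_{s-}+h_{\mathbf{j}})/\mathsf{V}(X_{s-}))$); the reorganization of the generator of $\log\mathsf{V}$ into $\Psi_{\mathbf{j}}$ uses precisely the algebraic identity $\log a - (a-1) = (\log a + 1 - a)$ built into the definition of $\mathfrak{V}_{\mathbf{j}}$. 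Combining (ii)--(iv), for every $\eta>0$ there exists a neighborhood $B_\eta$ of $\bar{x}$ inside $\Lambda_l$ on which $\Psi_{\mathbf{j}}\le -c_3-c_4/2-c_5+\eta$; since $X_t\to\bar{x}$ almost surely, $X_t\in B_\eta$ eventually, which controls the time-averaged drift in the $\limsup$ as $t\to\infty$.

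It remains to show $(M^c_t+M^d_t)/t\to 0$ almost surely, and then to convert the resulting bound on $\log\mathsf{V}(X_t)/t$ into one on $\log|X_t-\bar{x}|/t$ via (i), since $c_2\log|X_t-\bar{x}|\le\log\mathsf{V}(X_t)-\log c_1$. The martingale control is where the main obstacle lies: combining~\eqref{Eq:Cond_h}, the vanishing $g_{\mathbf{j}}(\bar{x})=0$ and $h_{\mathbf{j}}(\bar{x},z)=0$ (forced by $\bar{x}$ being a trivial solution under \textbf{H1}), and the order-$c_2$ Taylor behavior of $\mathsf{V}$ at $\bar{x}$ implied by (i), one must verify that the singular integrands $|\mathfrak{D}_x\mathsf{V}\cdot g_{\mathbf{j}}|^2/\mathsf{V}^2$ and $\log^2(\mathsf{V}(\cdot+h_{\mathbf{j}})/\mathsf{V})$ remain bounded on $\bar{\Lambda}_l$, so that $\langle M^c\rangle_t$ and $\langle M^d\rangle_t$ grow at most linearly in $t$; the strong law for local martingales (as used in~\cite{applebaum2009asymptotic,chao2017almost}) then delivers the desired vanishing. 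Passing to $\limsup$ in the It\^o identity and letting $\eta\downarrow 0$ yields $\limsup_t\log\mathsf{V}(X_t)/t\le-(2c_3+c_4+2c_5)/2$ almost surely, and the claimed sample Lyapunov exponent $-(2c_3+c_4+2c_5)/(2c_2)$ follows from (i).
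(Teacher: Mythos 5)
Your overall reduction (deducing \textbf{H1} from (i)--(ii), invoking Theorem~\ref{Theorem:GAS} and Lemma~\ref{Lemma:NeverReach}, then applying It\^o's formula to $\log$ of the Lyapunov function and reading off the rate from (ii)--(iv)) is the same as the paper's, but the step where you dispose of the martingale term has a genuine gap. You appeal to a strong law for local martingales, which needs the brackets of $M^c$ and $M^d$ to grow at most linearly, and you claim this because $|\mathfrak{D}_x\mathsf{V}\cdot g_{\mathbf{j}}|^2/\mathsf{V}^2$ and $\log^2\bigl(\mathsf{V}(\cdot+h_{\mathbf{j}})/\mathsf{V}\bigr)$ would be bounded on $\bar{\Lambda}_l$. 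Nothing in the hypotheses gives such upper bounds: (iii) is only a \emph{lower} bound ($\liminf\ge c_4$; the corresponding $\limsup$ may be $+\infty$), (iv) bounds $\int\mathfrak{V}_{\mathbf{j}}$ from above while $\mathfrak{V}_{\mathbf{j}}\le 0$ is perfectly compatible with $\bigl|\log\bigl(\mathsf{V}(x+h_{\mathbf{j}})/\mathsf{V}(x)\bigr)\bigr|$ being arbitrarily large, and \eqref{Eq:Cond_h} controls $|h_{\mathbf{j}}|$ but not the ratio $\mathsf{V}(x+h_{\mathbf{j}}(x,z))/\mathsf{V}(x)$, which can be arbitrarily small when a jump lands much closer to $\bar{x}$; condition (i) gives no two-sided polynomial (``Taylor'') control of $\mathsf{V}$ near $\bar{x}$ either. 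So the linear-growth claim is unsupported and the strong-law step collapses. The paper avoids this obstacle entirely by using the exponential martingale inequality with jumps \cite[Theorem 5.2.9]{applebaum2009levy} together with Borel--Cantelli: the resulting bound $M_t\le \frac{\theta}{2}\langle M^1,M^1\rangle_t+\theta\mathfrak{n}+\mathfrak{f}_{\theta}(t)$ is absorbed into the negative term $-\frac12\langle M^1,M^1\rangle_t$ already produced by the It\^o expansion of $\log\bar V$ (leaving $-\frac{1-\theta}{2}\langle M^1,M^1\rangle_t$), so no upper bound on the diffusion integrand is ever required, and the jump martingale is handled through $\mathfrak{f}_\theta(t)\to0$ as $\theta\to0$, which is exactly where \eqref{Eq:Cond_h} and the modification of \cite[Lemma 3.3]{applebaum2009asymptotic} enter.

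A second, more structural issue: you start the It\^o formula at the random time $T$ ``after which switching stops and the trajectory stays in $\Lambda_l$''. That $T$ is not a stopping time, so stochastic integrals initialized at $T$ and martingale estimates on $[T,\infty)$ are not justified as written. The paper instead extends $\mathsf{V}$ to a $\mathcal{C}^2$ function $\bar V$ on $\R^{d\times d}\setminus\{\bar x\}$ with global bounds, applies It\^o from $t_0$ under the full switched dynamics (summing over $k$ with $u^k_s$), and only afterwards localizes the drift estimates on the events $\Omega^{\mathsf r}_T$ with deterministic $T$ and $\P(\Omega^{\mathsf r}_T)\to1$, which is where (ii)--(iv) and the eventual activation of the $\mathbf{j}$-subsystem are exploited. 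Both defects are repairable, but only by essentially reverting to that route; as written, your argument does not go through.
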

\begin{proof}
 The condition (i) and (ii) implies that \textbf{H1} is satisfied. By applying Theorem~\ref{Theorem:GAS}, for the system~\eqref{Eq:SDE} under switching law $\sigma_1$, the solution is non-explosive and the target state $\bar{x}$ is GAS in mean and almost surely. Next, we estimate the convergence rate of $x(t)$ toward $\bar{x}$.

Let $\bar{V}(x)$ be a twice continuously differentiable positive definite function on $\mathbb{R}^{d\times d}\setminus \bar{x}$ which is equal to $\mathsf{V}(x)$ for all $x\in\Lambda_{l}$ and $c_1|x-\bar{x}|^{c_2}\leq \bar{V}(x) \leq \varphi(|x-\bar{x}|)$ for all $x\in\mathbb{R}^{d\times d}$ and for some $\varphi\in\mathcal{K}$.
It is straightforward to compute that, for all $k\in [m]$, we have
\begin{align*}
        \mathcal{A}_k \log \bar{V}(x)=&\frac{\mathfrak{D}_x\bar{V} f_k(x)}{\bar{V}(x)}+\frac{1}{2\bar{V}(x)}\mathrm{Tr}\Big(g^*_k(x)  \mathfrak{D}^2_x \bar{V}  g_k(x)\Big)-\frac{|\mathfrak{D}_x\bar{V}g_k(x)|^2}{2\bar{V}(x)^2}\\
        &+\int_{|z|\leq c}\Big(\log \frac{\bar{V}\big(x+h_k(x,z)\big)}{\bar{V}(x)}- \frac{\mathfrak{D}_x\bar{V} h_k(x,z) }{\bar{V}(x)}\Big) \nu(dz)\\
        =& \frac{\mathcal{A}_k \bar{V}(x)}{\bar{V}(x)}-\frac{|\mathfrak{D}_x\bar{V}g_k(x)|^2}{2\bar{V}(x)^2}+\int_{|z|\leq c}\bar{\mathfrak{V}}_{k}(x,z) \nu(dz),
\end{align*}
where $\bar{\mathfrak{V}}_{k}(x,z)=\log \frac{\bar{V}\big(x+h_k(x,z)\big)}{\bar{V}(x)}+1- \frac{\bar{V}\big(x+h_k(x,z)\big) }{\bar{V}(x)}$.
Due to Lemma~\ref{Lemma:NeverReach}, the target state is non-attainable in finite time almost surely.   Then, for all $x_0\in\mathbb{R}^{d\times d}\setminus \bar{x}$, $\bar{V}(X_t)>0$ for all $t\geq t_0$ almost surely.
By applying It\^o formula, for all $t\geq t_0$,
\begin{equation}\label{Eq:LogV}
\log \bar{V}(X_t)=\log \bar{V}(x_0)+\int^{t}_{t_0} \sum^m_{k=1}u^k_{s} \mathcal{A}_k \log \bar{V}(X_s)\d s+M_t,
\end{equation}  
where $M_t=M^1_t+M^2_t$, and
\begin{align*}
    M^1_t&:=\int^{t}_{t_0}  \sum^m_{k=1}u^k_{s} \frac{\mathfrak{D}_x\bar{V} g_k(X_s) }{\bar{V}(X_s)}\d W^k_s, \\
    M_t^2&:=\int^{t}_{t_0} \int_{|z|\leq c}\sum^m_{k=1}u^k_{s}\log \frac{\bar{V}\big(X_{s-}+h_k(X_{s-},z)\big)}{\bar{V}(X_{s-})}  \tilde{N}^{k}(\d s,\d z),
\end{align*}
are martingales vanishing at $t=t_0$.
Note that, due to the assumption \textbf{A2} and the condition~\eqref{Eq:Cond_h}, by slightly modifying the proof of~\cite[Lemma 3.3]{applebaum2009asymptotic},  the following integral in Equation~\eqref{Eq:LogV} satisfies
\begin{align*}
    \left|\int^t_{t_0}\int_{|z|\leq c} \sum^m_{k=1}u^k_{s}\bar{\mathfrak{V}}_{k}(x(s-),z) \d z\d s \right|<\infty.
\end{align*}
By the exponential martingale inequality with jumps~\cite[Theorem 5.2.9]{applebaum2009levy}, for any integer $\mathfrak{n}\geq t_0$, $\theta\in(0,1)$,
\begin{align*}
    \mathbb{P}\Big[\sup_{t_0\leq t\leq \mathfrak{n}}\Big(M_t-\frac{\theta}{2}\langle M^1, M^1\rangle_t-\mathfrak{f}_{\theta}(t) \Big)>\theta \mathfrak{n}   \Big]\leq e^{-\theta^2\mathfrak{n}},
\end{align*}
where
\begin{align*}
&\langle M_1, M_1\rangle_t := \int^t_{t_0}\sum^m_{k=1}u^k_{s} \frac{|\mathfrak{D}_x\bar{V} g_k(X_s)|^2}{\bar{V}^2(X_s)} \d s,\\
    &\mathfrak{f}_{\theta}(t)=\frac{1}{\theta}\int^t_{t_0}\int_{|z|\leq c}\sum^m_{k=1}u^k_{s}\Big[ \Big(\frac{\bar{V}\big(X_{s-}+h_k(X_{s-},z)\big)}{\bar{V}(X_{s-})}\Big)^{\theta}-1- \theta\log \frac{\bar{V}\big(X_{s-}+h_k(X_{s-},z)\big) }{\bar{V}(X_{s-})}\Big] \d z\d s.
\end{align*}
Since $\sum^{\infty}_{\mathfrak{n}=1}e^{-\theta^2\mathfrak{n}}<\infty$, by Borel-Cantelli lemma we have that for almost all sample paths there exists $\mathfrak{n}_0=\mathfrak{n}_0(\omega)$ such that, for all $\mathfrak{n}\geq \mathfrak{n}_0$ and $t_0\leq t\leq \mathfrak{n}$,
\begin{align*}
    M_t\leq \frac{\theta}{2}\langle M^1, M^1\rangle_t+\theta \mathfrak{n}+\mathfrak{f}_{\theta}(t).
\end{align*}
It follows that, for all $\mathfrak{n}\geq \mathfrak{n}_0$ and $t_0\leq t\leq \mathfrak{n}$, 
\begin{align*}
    \log \bar{V}(X_t)\leq& \log \bar{V}(x_0)+\int^{t}_{t_0} \sum^m_{k=1}u^k_{s} \frac{\mathcal{A}_k \bar{V}(X_s)}{\bar{V}(X_s)}\d s-\frac{1-\theta}{2}\langle M^1, M^1\rangle_t+\theta \mathfrak{n}+\mathfrak{f}_{\theta}(t)\\
    &+\int^t_{t_0}\int_{|z|\leq c}\sum^m_{k=1}u^k_{s}\bar{\mathfrak{V}}_{k}(X_{s-},z) \d z\d s.
\end{align*}
By following the same arguments as in the proof of~\cite[Theorem 3.1]{applebaum2009asymptotic}, for all $t\geq t_0$, we have $\lim_{\theta\rightarrow 0}\mathfrak{f}_{\theta}(t)=0$. Letting $\theta\to 0$, we have
\begin{align*}
    \limsup_{t\rightarrow\infty} \frac{1}{t}\log \bar{V}(X_t)\leq  \limsup_{t\rightarrow\infty} \frac{1}{t} \Big[& \int^{t}_{t_0} \sum^m_{k=1}u^k_{s} \frac{\mathcal{A}_k \bar{V}(X_s)}{\bar{V}(X_s)}\d s-\frac{1}{2}\langle M^1, M^1\rangle_t\\
    &+\int^t_{t_0}\int_{|z|\leq c}\sum^m_{k=1}u^k_{s}\bar{\mathfrak{V}}_{k}(X_{s-},z) \d z\d s\Big], \quad a.s.
\end{align*}
For every fixed constant $T\geq t_0$ and an arbitrary $\mathsf{r}\in(0,l^*-\epsilon)$, consider the event $\Omega^{\mathsf{r}}_T:=\{\omega\in\Omega:\,X_t\in\Lambda_{\mathsf{r}},\, t\geq T\}$. Based on the definition of $\bar{V}$ and the switching law $\sigma_1$ and the condition (ii), for almost all $\omega\in\Omega^{\mathsf{r}}_T$,  
\begin{align*}
\limsup_{t\rightarrow\infty}& \frac{1}{t} \Big[ \int^{t}_{t_0} \sum^m_{k=1}u^k_{s} \frac{\mathcal{A}_k \bar{V}(X_s)}{\bar{V}(X_s)}\d s-\frac{1}{2}\langle M^1, M^1\rangle_t+\int^t_{t_0}\int_{|z|\leq c}\sum^m_{k=1}u^k_{s}\bar{\mathfrak{V}}_{k}(X_{s-},z) \d z\d s\Big]\\
&\leq \limsup_{t\rightarrow\infty} \frac{1}{t} \Big[ \int^{t}_{T}  \frac{\mathcal{A}_{\mathbf{j}} \mathsf{V}(X_s)}{\mathsf{V}(X_s)}\d s-\int^{t}_{T}\frac{|\mathfrak{D}_x\mathsf{V}g_{\mathbf{j}}(x)|^2}{2\mathsf{V}(x)^2}ds+\int^t_{T}\int_{|z|\leq c}\mathfrak{V}_{\mathbf{j}}(X_{s-},z) \d z\d s\Big]\\
&\leq -c_3-\inf_{x\in \Lambda_{\mathsf{r}}\setminus \bar{x}}\frac{|\mathfrak{D}_x\mathsf{V}g_{\mathbf{j}}(x)|^2}{2\mathsf{V}(x)^2} + \sup_{x\in \Lambda_{\mathsf{r}}\setminus \bar{x}}\int_{|z|\leq c}\mathfrak{V}_{\mathbf{j}}(x,z) \d z.
\end{align*}
For arbitrary small $\mathsf{r}>0$, Theorem~\ref{Theorem:GAS} implies  $\lim_{T\rightarrow\infty}\mathbb{P}(\Omega^{\mathsf{r}}_T)=1$. Therefore, due to conditions (i), (iii) and (iv), we have
\begin{equation*}
    \limsup_{t\rightarrow \infty}\frac{1}{t}\log |X_t-\bar{x}|\leq -\frac{2c_3+c_4+2c_5}{2c_2}, \quad a.s.
\end{equation*}
which yields the result.   
\end{proof}

\section{Hysteresis switching for quantum stabilization}

The potential of dissipative switching control has been explored in recent works~\cite{liang2024switching,grigoletto2021stabilization,scaramuzza2015switching}, extending beyond coherent control to include dissipative resources. This involves utilizing measurements and controlled interactions with engineered fields and quantum environments. The hysteresis switching method introduced in~\cite[Section 3]{liang2024switching} assumes that the target state or subspace remains invariant under all controlled dynamics. While this aligns with classical switched systems~\cite{liberzon2003switching,teel2014stability},  it presents a challenge in designing coupled fields. To address this,~\cite[Section 4]{liang2024switching} proposed a novel design framework that modulates the intensity of the switched generators, thereby relaxing the strict invariance requirement. However, implementing this strategy in practice remains challenging due to the need for precise control of interaction strengths.

In this section,
we consider quantum systems described on $\H$ and turn to \eqref{eq:sme_switch}, where the monitored system can be coupled to one of a finite set of external systems during assigned period of times. The effect of these couplings on the dynamics are our control resources, and which is active at which time is going to be determined by a switching law. Assuming that these external systems act as memory-less (Markov) environments~\cite{alicki2007quantum}, and only one external system is coupled to the target one at any given time. We will apply the hysteresis switching strategy from Section \ref{Sec:Hysteresis} for quantum stabilization. Notably, this strategy relaxes the invariance condition to just one switched generator and ensures only a finite number of switches occur. This simplification enhances practical applicability.



\subsection{Invariant and stable subspaces} 
Let $\H_S\subset \H$ be the target subspace. Denote by $\Pi_{0}\notin\{0,\mathds{1}\}$ the orthogonal projection on $\H_S\subset \H$. Define the set of density matrices
\begin{equation*}
\mathcal{I}(\H_S):=\{\rho\in\mathcal{S}(\H): \mathrm{Tr}(\Pi_{0}\rho)=1\},
\end{equation*}
namely those whose support is contained in $\H_S$.
\begin{definition}
For the switched system~\eqref{eq:sme_switch}, the subspace $\H_S$ is called invariant almost surely if $\rho_0\in \mathcal{I}(\H_S)$, $\rho_t\in \mathcal{I}(\H_S)$ for all $t>0$ almost surely.
\end{definition}

Let $\H=\H_S\oplus\H_R$ and $X\in\mathcal{B}(\H)$, the matrix representation in an appropriately chosen basis can be written as 
\begin{equation*}
X=\left[\begin{matrix}
X_S & X_P\\
X_Q & X_R
\end{matrix}\right],
\end{equation*}
where $X_S,X_R,X_P$ and $X_Q$ are matrices representing operators from $\H_S$ to $\H_S$, from $\H_R$ to $\H_R$, from $\H_R$ to $\H_S$, from $\H_S$ to $\H_R$, respectively.
The invariance property of \eqref{eq:sme0} corresponds directly to the structure of $\mathcal{F}_k$. 
\begin{theorem}[{\cite{ticozzi2008quantum}}]
For the system~\eqref{eq:sme0}, the subspace $\H_S\subset\H$ is 
invariant almost surely if and only if $L_{k,Q}=C_{k,Q}=D_{k,Q}=0$ and $\i(H_{0,P}+H_{k,P})=\frac{1}{2}(L^*_{k,S}L_{k,P}+C^*_{k,S}C_{k,P}+D^*_{k,S}D_{k,P})$.
\label{Thm:Invariance}
\end{theorem}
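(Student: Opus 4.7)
The plan is to reduce pathwise invariance of $\mathcal{I}(\H_S)$ under~\eqref{eq:sme0} to an algebraic block-matrix condition on the coefficients, and then to lift this back to the SDE by invoking the strong uniqueness guaranteed by Theorem~\ref{thm:wellposed}. The first observation is that, since $\rho\ge 0$, the identity $\mathrm{Tr}(\Pi_{0}\rho)=1$ forces the lower-right block $\rho_R=0$ and, by positivity, also the off-diagonal blocks $\rho_P=\rho_Q^*=0$; hence
\[
\mathcal{I}(\H_S)=\{\rho\in\Sc(\H):\rho_R=\rho_P=\rho_Q=0\}.
\]
Invariance therefore amounts to showing that each of $\Fc_k(\rho)$, $\Gc_k(\rho)$ and $\Jc_{D_k}(\rho)-\rho$ keeps the $R$- and off-diagonal blocks at zero whenever $\rho$ itself has this shape.

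For sufficiency, I would verify each coefficient by a direct block computation. The jump channel is the cleanest: assuming $D_{k,Q}=0$, the upper block-triangular form of $D_k$ together with $\rho_R=0$ collapses $D_k\rho D_k^*$ to its $S$-block only, so $\Jc_{D_k}(\rho)\in\mathcal{I}(\H_S)$. With $C_{k,Q}=0$, an analogous computation makes $C_k\rho+\rho C_k^*$, and hence $\Gc_k(\rho)$, have only an $S$-block. For the drift, expanding the commutator and the three Lindblad dissipators in blocks yields
\[
\Fc_k(\rho)_P=\rho_S\Bigl(\i(H_{0,P}+H_{k,P})-\tfrac12\bigl(L_{k,S}^*L_{k,P}+C_{k,S}^*C_{k,P}+D_{k,S}^*D_{k,P}\bigr)\Bigr),
\]
which vanishes precisely under the stated Hamiltonian equality; the $R$-block vanishes automatically. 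One then builds a candidate solution to~\eqref{eq:sme0} confined to $\Sc(\H_S)$ and invokes Theorem~\ref{thm:wellposed} to conclude that this candidate must coincide with the original solution started at $\rho_0\in\mathcal{I}(\H_S)$.

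For necessity, I would separate the jump from the continuous parts. If $\rho_0\in\mathcal{I}(\H_S)$ with $v_{D_k}(\rho_0)>0$, a jump fires at strictly positive rate and the post-jump state $\Jc_{D_k}(\rho_0)$ must again lie in $\mathcal{I}(\H_S)$; expanding $D_k\rho_0 D_k^*$ in blocks and letting $\rho_{0,S}$ range over $\Sc(\H_S)$ immediately forces $D_{k,Q}=0$. To recover the $L$- and $C$-conditions I would take expectations: $\bar\rho_t:=\mathbb{E}[\rho_t]$ solves the deterministic Lindblad equation $\dot{\bar\rho}=\Fc_k(\bar\rho)$ and still lies in the convex set $\mathcal{I}(\H_S)$, so its $R$-block stays at zero. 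This $R$-block equals $L_{k,Q}\rho_S L_{k,Q}^*+C_{k,Q}\rho_S C_{k,Q}^*+D_{k,Q}\rho_S D_{k,Q}^*$, a sum of positive semi-definite matrices, so each summand must vanish for every $\rho_S\in\Sc(\H_S)$, forcing $L_{k,Q}=C_{k,Q}=0$. The Hamiltonian identity is then equivalent to requiring the $P$-block of $\Fc_k(\rho)$ to vanish as well.

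I expect the main obstacle to be the jump coefficient itself: $\Jc_{D_k}$ is nonlinear and the indicator in the jump integral is discontinuous, so standard forward-invariance criteria for continuous diffusions do not apply directly. The cleanest remedy is to treat the jump event through its unnormalized linear form $\rho\mapsto D_k\rho D_k^*$, whose block structure is transparent, and to reserve the expectation argument for the deterministic Lindbladian $\Fc_k$, which already incorporates the jump compensator $\Ic_{D_k}$. Once this split is in place, the remainder reduces to elementary linear algebra in blocks.
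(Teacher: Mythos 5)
The paper offers no proof of Theorem~\ref{Thm:Invariance}: it is imported verbatim from~\cite{ticozzi2008quantum}, so your attempt can only be measured against the standard argument in that reference, which it essentially reproduces, and it is sound. The identification of $\mathcal{I}(\H_S)$ with the states whose $P$, $Q$, $R$ blocks vanish, the block identities $\Fc_k(\rho)_R=L_{k,Q}\rho_S L_{k,Q}^*+C_{k,Q}\rho_S C_{k,Q}^*+D_{k,Q}\rho_S D_{k,Q}^*$ and $\Fc_k(\rho)_P=\rho_S\bigl(\i(H_{0,P}+H_{k,P})-\tfrac12(L_{k,S}^*L_{k,P}+C_{k,S}^*C_{k,P}+D_{k,S}^*D_{k,P})\bigr)$ (the latter once the $Q$-blocks vanish), and the passage to $\bar\rho_t=\E[\rho_t]$, which solves the linear Lindblad equation by linearity of $\Fc_k$, are all correct and give necessity cleanly by taking $\rho_{0,S}$ full rank. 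Two refinements are worth noting. First, your separate jump-time argument for $D_{k,Q}=0$ is dispensable: since $\Ic_{D_k}$ is already a summand of $\Fc_k$, the expectation argument forces all three $Q$-blocks to vanish at once (each contribution to the $R$-block is positive semidefinite); if you insist on the pathwise version you must additionally justify that a jump occurs with positive probability while $\rho_{t-}$ still has full-rank $S$-block, which your sketch leaves implicit. Second, for sufficiency your embedding-plus-uniqueness route does work — under the stated conditions the $S$-blocks of $\Fc_k$, $\Gc_k$, of $D_k\rho D_k^*$, and the rate $v_{D_k}(\rho)=\mathrm{Tr}(D_{k,S}\rho_S D_{k,S}^*)$ reproduce exactly the coefficients of the SME restricted to $\H_S$, so the embedded restricted solution solves \eqref{eq:sme0} and uniqueness applies — but a lighter alternative, consistent with how the paper itself argues in Lemma~\ref{Lemma:NeverReach_SME}, is to observe that the algebraic conditions give invariance in mean, and since $\mathrm{Tr}(\Pi_0^{\perp}\rho_t)\ge 0$ has zero expectation it vanishes almost surely, simultaneously for all $t$ by right-continuity of trajectories.
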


Based on the block-decomposition with respect to the orthogonal direct sum decomposition $\H=\H_S\oplus\H_R$, for any  $X_R\in\mathcal{B}(\H_R)$, we call the {\em extension of $X_R$ to} $\mathcal{B}(\H)$ the following matrix:
\begin{equation*}
X=\left[\begin{matrix}
0 & 0\\
0 & X_R
\end{matrix}\right].
\end{equation*} 
In order to quantify the distance between $\rho\in\mathcal{S}(\H)$ and $\mathcal{I}(\H_S)$ we shall make use of linear functions associated to a  positive $X_R\in\mathcal{B}_{+}(\H_R)$, namely
$$
\mathrm{Tr}(X \rho)=\mathrm{Tr}(X_R \rho_R)\in[0,1],
$$
where $X$ is the extension in $\mathcal{B}(\H)$ of $X_R$. Such function is used as an estimation of the distance $\mathbf{d}_{0}(\rho)$. 
\begin{lemma}[{\cite{liang2024switching}}]
For all $\rho\in\mathcal{S}(\H)$ and the orthogonal projection $\Pi_{0}\in\mathcal{B}(\H)$ on $\mathcal{H}_S$, there exist two constants $c_1>0$ and $c_2>0$ such that 
\begin{equation}
c_1\mathrm{Tr}(X \rho)\leq \|\rho-\Pi_{0}\rho \Pi_{0}\| \leq c_2 \sqrt{\mathrm{Tr}(X \rho)},
\label{Eq:Relation_DisLya}
\end{equation}
where $X$ is the extension in $\mathcal{B}(\H)$ of $X_R$.
\label{Lemma:Relation_DisLya}
\end{lemma}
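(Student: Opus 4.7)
The plan is to exploit the block decomposition of $\rho\in\mathcal{S}(\H)$ with respect to the direct sum $\H = \H_S \oplus \H_R$, and to combine the positivity and unit-trace constraints of density matrices with the strict positivity of $X_R$ on $\H_R$. Writing
$$
\rho = \begin{pmatrix} \rho_S & \rho_P \\ \rho_P^* & \rho_R \end{pmatrix},\qquad X = \begin{pmatrix} 0 & 0 \\ 0 & X_R \end{pmatrix},
$$
one has $\Pi_0 \rho \Pi_0 = \mathrm{diag}(\rho_S,0)$ and hence $\rho - \Pi_0 \rho \Pi_0$ retains only the off-target blocks $\rho_P, \rho_P^*, \rho_R$, while $\mathrm{Tr}(X\rho) = \mathrm{Tr}(X_R \rho_R)$ depends on $\rho_R$ alone. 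Since everything lives in finite dimension, all norms on $\Bc(\H)$ are equivalent, so without loss of generality I may carry out the estimates for the Hilbert–Schmidt norm and absorb the equivalence constants into $c_1, c_2$.

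For the lower bound $c_1\mathrm{Tr}(X\rho)\leq \|\rho-\Pi_0\rho\Pi_0\|$, I would start from $\mathrm{Tr}(X_R \rho_R)\leq \|X_R\|_{\mathrm{op}}\mathrm{Tr}(\rho_R)$, and then use $\rho_R\geq 0$ together with the finite-dimensional estimate $\mathrm{Tr}(\rho_R)\leq \dim(\H_R)\|\rho_R\|_{\mathrm{op}}$. Since $\rho_R$ is the $R$-block of $\rho-\Pi_0\rho\Pi_0$, one has $\|\rho_R\|_{\mathrm{op}}\leq \|\rho-\Pi_0\rho\Pi_0\|_{\mathrm{op}}$, and transitioning between $\|\cdot\|_{\mathrm{op}}$ and the norm appearing in the statement only costs another dimensional constant.

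For the upper bound $\|\rho-\Pi_0\rho\Pi_0\|\leq c_2\sqrt{\mathrm{Tr}(X\rho)}$, the key tool is the Schwarz inequality for positive semidefinite block matrices: since $\rho\geq 0$, there exists a contraction $K$ with $\rho_P = \rho_S^{1/2} K \rho_R^{1/2}$, whence
$$
\|\rho_P\|_{\mathrm{op}}^2 \leq \|\rho_S\|_{\mathrm{op}}\,\|\rho_R\|_{\mathrm{op}} \leq \|\rho_R\|_{\mathrm{op}} \leq \mathrm{Tr}(\rho_R),
$$
using $\|\rho_S\|_{\mathrm{op}}\le 1$. Combining this with $\|\rho_R\|_F^2 = \mathrm{Tr}(\rho_R^2)\le \mathrm{Tr}(\rho_R)$ (from $0\le \rho_R\le I$) gives a bound of the form $\|\rho-\Pi_0\rho\Pi_0\|_F^2 \leq C\,\mathrm{Tr}(\rho_R)$. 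Finally, the strict positivity of $X_R$ on $\H_R$ provides $\lambda_{\min}(X_R)>0$, so $\mathrm{Tr}(\rho_R)\leq \lambda_{\min}(X_R)^{-1}\mathrm{Tr}(X\rho)$, and substituting yields $c_2 = \sqrt{C/\lambda_{\min}(X_R)}$.

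The main obstacle, and the reason for the square-root scaling, is precisely the block-PSD Schwarz inequality: a naive estimate would bound $\rho_P$ only by its own norm, whereas the positivity of $\rho$ couples the coherence block to the population block sublinearly, $\|\rho_P\|\lesssim\sqrt{\|\rho_R\|}$. Without this step one would obtain only a linear bound in $\mathrm{Tr}(X\rho)$, which would be too weak to serve as a Lyapunov-type distance estimate—near $\mathcal{I}(\H_S)$ the deviation $\rho-\Pi_0\rho\Pi_0$ is dominated by the coherences, whose size is of order $\sqrt{\mathrm{Tr}(X\rho)}$ rather than $\mathrm{Tr}(X\rho)$ itself.
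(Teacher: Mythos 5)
Your argument is correct: the block decomposition, the bound $\|\rho_P\|_{\mathrm{op}}^2\le\|\rho_S\|_{\mathrm{op}}\|\rho_R\|_{\mathrm{op}}$ from positivity of $\rho$, and the comparison $\lambda_{\min}(X_R)\mathrm{Tr}(\rho_R)\le\mathrm{Tr}(X_R\rho_R)\le\|X_R\|_{\mathrm{op}}\mathrm{Tr}(\rho_R)$ are exactly the ingredients behind this estimate, and the paper itself gives no proof (it imports the lemma from the cited reference, whose argument is of the same type). The only point worth making explicit is that your upper bound uses $\lambda_{\min}(X_R)>0$, i.e.\ $X_R$ must be positive \emph{definite} on $\H_R$, not merely an element of $\Bc_+(\H_R)$ — this is indeed necessary for the statement, since an $X_R$ with nontrivial kernel admits $\rho$ with $\mathrm{Tr}(X\rho)=0$ but $\rho\notin\mathcal{I}(\H_S)$.
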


To study the stabilization of the quantum system under switched SME dynamics, we extend the classical notions of stochastic stability (Definition~\ref{Def:Stability}) to the context of density operators and invariant subspaces. Inspired by~\cite{ticozzi2008quantum,benoist2017exponential}, we define the following measure of convergence to a target subspace in terms of the projection distance. In the following definition, denote $\mathbf{d}_{0}(\rho):=\|\rho-\Pi_{0}\rho\Pi_{0}\|$ where $\|\cdot\|$ could be any matrix norm.

\begin{definition}
The subspace $\H_S\subset \mathbb{H}$ is said to be
\begin{enumerate}
\item
\emph{stable in probability}, if for every pair $\varepsilon \in (0,1)$ and $r>0$, there exists $\delta = \delta(\varepsilon,r,t_0)>0$ such that,
\begin{equation*}
\mathbb{P} \big( \mathbf{d}_{0}(\rho_t)< r \text{ for } t \geq 0 \big) \geq 1-\varepsilon,
\end{equation*}
whenever $\mathbf{d}_{0}(\rho_0)<\delta$. 

\item
almost surely GAS, if it is stable in probability and,
\begin{equation*}
\mathbb{P} \left( \lim_{t\rightarrow\infty}\mathbf{d}_{0}(\rho_t)=0 \right) = 1, \quad \forall \rho_0\in\mathcal{S}(\H).
\end{equation*}

\item almost surely GES, if
\begin{equation*}
\P\left(\limsup_{t \rightarrow \infty} \frac{1}{t} \log \big( \mathbf{d}_{0}(\rho_t)\big) < 0\right)=1,\quad \forall \rho_0\in\mathcal{S}(\H).
\end{equation*}
\end{enumerate}
\end{definition}
The control problem under consideration is the following. Given a target subspace $\H_S\subset \H$, construct switching laws $u_t$ that admits a set of non-Zeno and non-chattering switching sequence, which ensures that $\H_S$ is GAS and/or GES almost surely.

\subsection{Measurement-dependent switching strategies}
In~\cite[Section III]{liang2024switching}, we proposed four switching strategies ensuring the GAS/GES of the target subspace $\H_S$ under the following assumptions:
\begin{enumerate}
    \item $\H_S$ is invariant with respect to $\mathcal{F}_{k}$ for all $k\in[m]$.
    \item There exist $K_R\in\mathcal{B}_{+}(\H_R)$ and a constant $c>0$ such that 
for all $\rho\in\mathcal{S}(\H)$,
$\mathbf{L}_K(\rho)\leq -c\mathrm{Tr}(K\rho)$, where $K$ is the extension in $\mathcal{B}(\H)$ of $K_R$ and $\mathbf{L}_K(\rho):=\min_{k\in[m]}\mathrm{Tr}(K\mathcal{F}_k(\rho))$.
\end{enumerate}
However, the above assumptions are maybe difficult to realize in the real world situation. Moreover, from the simulations in~\cite[Section V]{liang2024switching}, the estimation of the Lyapunov exponent provided by~\cite[Section III]{liang2024switching} is too rough since we use a linear common Lyapunov function the diffusion term of SME is ignored. In order to relax the above assumptions, in~\cite[Section III]{liang2024switching}, we supposed the gains of all Hamiltonians and noise operators are adjustable and proposed a switching strategy ensuring the GAS of  $\H_S$ under the following assumption:
\begin{itemize}
\item There exists a $K_R\in\mathcal{B}_{+}(\H_R)$ such that $\mathbf{L}_K(\rho)<0$ for all $\rho\in\mathcal{S}(\H)\setminus\mathcal{I}(\H_S)$, where $K$ is the extension in $\mathcal{B}(\H)$ of $K_R$.
\end{itemize}
From the point of view of physical experiment operation, it is difficult to modulate the the gains of all Hamiltonians and noise operators.

In the following, based on Section~\ref{Sec:Hysteresis}, we propose a switching algorithm ensuring the GAS/GES of the target subspace $\H_S$, where the switching stops in finite time. This requires weaker assumptions on the structure of control Hamiltonian and noise operators, and we provide an approach to obtain a more precise estimation of Lyapunov exponent. Following the approach as in~\ref{sec:switchingrules_SDE}, we make the following assumption adapted to the SME.
Define $\Lambda_d:=\{\rho\in\mathcal{S}(\H):\mathbf{d}_0(\rho)< d\}$ with $d>0$ and $\bar{\Lambda}_d$ the closure of $\Lambda_d$.
\begin{itemize}
\item[$\mathbf{\overline{H}1}$:] There exist three functions $\mu_1,\mu_2,\nu\in\mathcal{K}$, and a positive definite continuous function $V(\rho)$ and a constant $l>0$ and $\mathbf{j}\in[m]$ such that $\mu_1(\mathbf{d}_0(\rho))\leq V(\rho)\leq \mu_2(\mathbf{d}_0(\rho))$ and $\mathcal{A}_{\mathbf{j}}V(\rho)\leq -\nu(\mathbf{d}_0(\rho))$ for all $\rho\in\bar{\Lambda}_l$.
\item[$\mathbf{\overline{H}2}$:] There exist $K_R\in\mathcal{B}_{+}(\H_R)$ such that $\mathbf{L}_K(\rho)<0$ for all $\rho\in\mathcal{S}(\H)\setminus \Lambda_{l^*-\epsilon}$ with $l^*:=\mu_2^{-1}\circ\mu_1(l)\in(0,l]$ and $\epsilon\in(0,l^*)$, where $K$ is the extension in $\mathcal{B}(\H)$ of $K_R$.
\end{itemize}
It is straightforward to deduce that $\H_S$ is invariant for $\mathbf{j}$-th subsystem if $\mathbf{\overline{H}1}$ is satisfied. Therefore, comparing to the assumptions proposed in the previous work, the control hypotheses $\mathbf{\overline{H}1}$ and $\mathbf{\overline{H}2}$ are of higher reliability, since they only require that $\H_S$ is invariant for $\mathbf{j}$-th subsystem.  
\begin{remark}
    The compactness of $\mathcal{S}(\H)\setminus\Lambda_{l^*-\epsilon}$, $\mathbf{\overline{H}2}$ and the continuity of $\mathbf{L}_K(\rho)$ on $\mathcal{S}(\H)$~\cite[Lemma A.2]{liang2024switching} implies that, there exists a constant $\delta>0$ such that $\mathbf{L}_K(\rho)\leq -\delta$ for all $\rho\in\mathcal{S}(\H)\setminus \Lambda_{l^*-\epsilon}$.
\end{remark}

Suppose that $\mathbf{\overline{H}1}$ and $\mathbf{\overline{H}2}$ holds. Follow the analysis in Section~\ref{Sec:Hysteresis}, for all $k\in[m]$, we define the regions
\begin{equation}
\Theta^{d^*-\epsilon}_j:=\big\{\rho\in\mathcal{S}(\H)\setminus\Lambda_{l^*-\epsilon}:\,\Tr(K\mathcal{F}_j(\rho))< r \mathbf{L}_K(\rho) \big\},
\label{Eq:Region_2}
\end{equation}
where the constants $r\in(0,1)$ and $\epsilon\in(0,d^*)$ are used to control the dwell-time and the number of switches. 
Then, we have 
$
\Lambda_{d}\bigcup\cup_{j\in[m]}\Theta^{d^*-\epsilon}_j=\mathcal{S}(\H).
$
Otherwise, there exists a $\rho\in\mathcal{S}(\H)\setminus\Lambda_{d^*-\epsilon}$ such that $\Tr(K\mathcal{F}_j(\rho))\geq r \mathbf{L}_K(\rho)$ for all $j\in[m]$, which leads to a contradiction since $\mathbf{L}_K(\rho)<0$. 

Next, in order to ensure that the switching surfaces can only be hit by the continuous part of the trajectory $\rho(t)$, we make the following assumption by modifying Assumption \ref{ass:partition2} to adapt to SME~\eqref{eq:sme_switch}, and switching law $\sigma_2$.
\begin{assumption}\label{ass:partition3}
Set $\mathbf{j}=m$. For all $k\in[m-1]$ such that $D_k\neq 0$, if $\rho\in \Theta^{l^*-\epsilon}_k$ then $\mathcal{J}_{D_k}(\rho)\in \Theta^{l^*-\epsilon}_k$, i.e., $\Tr(K\mathcal{F}_k(D_k\rho D^*_k))< r \mathbf{L}_K(D_k\rho D^*_k)$; and $\mathcal{J}_{D_\mathbf{j}}(\rho)\in \Lambda_l$ for all $\rho\in \Lambda_{l^*-\epsilon}$ with $D_{\mathbf{j}}\neq 0$. 
\end{assumption}

\begin{definition}[Switching law $\sigma_2$]
For any initial state $\rho_0\in \mathcal{S}(\H)\setminus \mathcal{I}(\H_S)$, set $\tau_0=0$ and 
\begin{equation*}
\begin{split}
&p_0:=
\begin{cases}
\arg\min_{j\in[m]}\Tr(K\mathcal{F}_j(\rho_0)),& \text{if }\rho_0\in \mathcal{S}(\H)\setminus \Lambda_{l^*-\epsilon};\\
\mathbf{j},& \text{if }\rho_0\in \Lambda_{l^*-\epsilon}\setminus \mathcal{I}(\H_S),
\end{cases}\\
&u^{k}_0 := \mathds{1}_{\{k=p_{0}\}}.
\end{split}
\end{equation*}
Then, for all $n\in\mathbb{N}$, set
\begin{equation*}
\begin{split}
&\tau_{n+1}:=
\begin{cases}
\inf\{t\geq \tau_n:\,\rho_t\notin \Theta^{d^*-\epsilon}_{p_n}\},& \text{if }\rho_{\tau_n}\in \mathcal{S}(\H)\setminus \Lambda_{l*-\epsilon};\\
\inf\{t\geq \tau_n:\,\rho_t\notin\Lambda_d\},& \text{if }\rho_{\tau_n}\in \Lambda_{l^*-\epsilon},
\end{cases}\\
&p_{n+1}:=
\begin{cases}
\arg\min_{j\in[m]}\Tr(K\mathcal{F}_j(\rho_{\tau_{n+1}})),& \text{if }\rho_{\tau_{n+1}}\in \mathcal{S}(\H)\setminus \Lambda_{l^*-\epsilon};\\
\mathbf{j},& \text{if }\rho_{\tau_{n+1}}\in \Lambda_{l^*-\epsilon},
\end{cases}\\
&u^{k}_{\tau_{n+1}} := \mathds{1}_{\{k=p_{n+1}\}}.
\end{split}
\end{equation*}
Since the overlap of each adjacent open regions, $\tau_{n+1}>\tau_n$ almost surely for all $n\in \mathbb{N}$. 
\end{definition}
 
By applying the similar arguments as in Proposition~\ref{Proposition:GES},  the main result of this section can be stated as follows:
\begin{proposition}
   Suppose that $\mathbf{\overline{H}1}$, $\mathbf{\overline{H}2}$ and Assumption~\ref{ass:partition3} are satisfied. Then, for the switched system~\eqref{eq:sme_switch} under the switching algorithm $\sigma_2$, the switch occurs only finite times for almost all sample path, and $\H_S$ is GAS in mean and almost surely.
\label{Prop:GAS_SME} 
\end{proposition}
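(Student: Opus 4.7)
The plan is to transport the three–step argument used for Theorem~\ref{Theorem:GAS} (via Lemma~\ref{Lemma:NeverExit}) to the SME setting~\eqref{eq:sme_switch}, replacing the distance $|x-\bar x|$ by $\mathbf{d}_0(\rho)$, the local Lyapunov function by the $V$ provided by $\mathbf{\overline{H}1}$, and the global ``minimiser'' Lyapunov function by the linear functional $\rho\mapsto \mathrm{Tr}(K\rho)$ supplied by $\mathbf{\overline{H}2}$. The well-posedness of~\eqref{eq:sme_switch} under $\sigma_2$ follows from Theorem~\ref{thm:wellposed}(A) once we verify that Assumption~\ref{ass:partition3} prevents a switching surface from being crossed by a jump: this is exactly what Assumption~\ref{ass:partition3} imposes on the regions $\Theta_k^{l^*-\epsilon}$ and on $\Lambda_{l^*-\epsilon}$ with $\mathbf{j}$ active.

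First I would show a \emph{local non-exit} estimate. For $\rho_0\in\Lambda_{l^*-\epsilon}$, the switching law $\sigma_2$ keeps the $\mathbf{j}$-th subsystem active until the first exit time $T_l$ from $\Lambda_l$. Applying Itô's formula to $V$ between $0$ and $T_l\wedge t$, the drift $\mathcal A_\mathbf{j} V(\rho_s)\le -\nu(\mathbf{d}_0(\rho_s))\le 0$ from $\mathbf{\overline{H}1}$ together with the sandwich $\mu_1(\mathbf{d}_0)\le V\le \mu_2(\mathbf{d}_0)$ yields, exactly as in Step~1 of Lemma~\ref{Lemma:NeverExit}, the supermartingale-type bound $\P(T_l\le t)\le \alpha$ with $\alpha=\mu_2(l^*-\epsilon)/\mu_1(l)<1$, whence
\[
\P\big(\rho_t\in\Lambda_l,\ \forall t\ge 0\big)\ \ge\ 1-\alpha>0,\quad \forall\rho_0\in\Lambda_{l^*-\epsilon}.
\]

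Next I would establish a \emph{return} estimate. Outside $\Lambda_{l^*-\epsilon}$ the switching rule selects the active subsystem in some $\Theta_k^{l^*-\epsilon}$, so $\sum_k u_s^k\,\mathrm{Tr}(K\mathcal F_k(\rho_s))\le r\,\mathbf L_K(\rho_s)\le -r\delta$ (with $\delta>0$ from the remark after $\mathbf{\overline{H}2}$). Defining the stopping times $(\mathfrak s_n,\mathfrak t_n)$ as in Lemma~\ref{Lemma:NeverExit} (alternating exits of $\Lambda_l$ and returns to $\Lambda_{l^*-\epsilon}$) and applying Itô's formula to the \emph{linear} Lyapunov $\rho\mapsto \mathrm{Tr}(K\rho)$ on each interval $[\mathfrak s_n,\mathfrak t_{n+1}]$, the diffusion and compensated-jump terms are martingales while the drift is bounded by $-r\delta$. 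Since $0\le \mathrm{Tr}(K\rho)\le \|K\|$ is uniformly bounded on $\mathcal S(\H)$ (no ad-hoc compactness is needed here, in contrast to the SDE case), the same bounded-increment/dominated convergence argument used in Step~2 of Lemma~\ref{Lemma:NeverExit} gives $\P(\mathfrak t_{n+1}<\infty\mid \mathfrak s_n<\infty)=1$.

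Combining these two estimates via the strong Markov property (using that the regions $\Theta_k^{l^*-\epsilon}$ overlap, which, together with Assumption~\ref{ass:partition3}, guarantees $\tau_{n+1}>\tau_n$ a.s., so no Zeno/chattering) and the Borel–Cantelli lemma exactly as in Step~3 of Lemma~\ref{Lemma:NeverExit}, one obtains a (random) finite index $M$ after which the $\mathbf{j}$-th subsystem remains active and $\rho_t$ stays inside $\Lambda_l$ forever; in particular the total number of switches is a.s.\ finite. Finally, on the full-probability event $\Omega_l$ so obtained, the evolution is that of a single jump-diffusion SME whose generator $\mathcal F_\mathbf{j}$ satisfies the local Lyapunov decrease in $\mathbf{\overline{H}1}$, so the stochastic LaSalle invariance theorem (Theorem~\ref{Thm:LaSalle}) together with the bound $V\le \mu_2(l)$ on $\bar\Lambda_l$ and the dominated convergence theorem yield both $\mathbb E[\mathbf{d}_0(\rho_t)]\to 0$ and $\mathbf{d}_0(\rho_t)\to 0$ a.s.; stability in probability is a by-product of the Step~1 estimate with $r\in(0,l)$ arbitrary.

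The main obstacle I foresee is handling jumps at the switching instants $\tau_n$: Assumption~\ref{ass:partition3} is needed precisely to rule out the scenario in which the Poisson map $\mathcal J_{D_k}$ ejects $\rho_{\tau_n-}$ into a different region, which would make the sequence $(\tau_n)$ potentially accumulate and would invalidate the ``one subsystem active per sample path'' property used when applying Itô's formula on each $[\mathfrak s_n,\mathfrak t_{n+1}]$. A careful bookkeeping of the jump contributions in each Itô expansion (the compensated integrals $\tilde N^k$ give genuine martingales because the indicator $\mathds 1_{\{0\le x\le v_{D_k}(\rho_{s-})\}}$ is bounded and predictable, and the boundedness of $K$ makes the integrand square integrable) is the place where the argument differs in detail from the SDE version, but the structure of the proof remains that of Theorem~\ref{Theorem:GAS}.
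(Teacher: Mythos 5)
Your proposal is correct and follows essentially the intended route: the paper gives no separate proof of Proposition~\ref{Prop:GAS_SME}, deferring to ``similar arguments'' as in the SDE case, and your write-up is exactly that transposition of Lemma~\ref{Lemma:NeverExit} and Theorem~\ref{Theorem:GAS} with $V$ from $\mathbf{\overline{H}1}$ playing the role of the local Lyapunov function, $\mathrm{Tr}(K\cdot)$ from $\mathbf{\overline{H}2}$ driving the return estimate, and Assumption~\ref{ass:partition3} preventing jumps across switching surfaces. The only caveats are cosmetic and shared with the paper (e.g.\ implicitly assuming $V$ is regular enough for the generator/It\^o step), so no genuine gap.
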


\medskip

Comparing to~\cite[Theorem 8]{liang2024switching} under Assumptions 1 and 2, the advantage of Proposition~\ref{Prop:GAS_SME} is requiring $\mathbf{\overline{H}1}$ ensuring the existence of a local strict Lyapunov-like function for only one subsystem rather than the invariance properties of all subsystems, and requiring $\mathbf{\overline{H}2}$ which relaxes Assumption 2. However, constructing strict Lyapunov-like functions is a challenging problem even a local one. In the following Proposition~\ref{Lemma:GES_structure}, we investigate our strategies to achieve $\mathbf{\overline{H}1}$ by constructing a exponential-type Lyapunov-like function based on the structure of $\mathcal{F}_{\mathbf{j}}$. 

In fact, the candidate Lyapunov-like function $V(\rho)$ in $\mathbf{\overline{H}1}$ usually cannot be constructed as a linear function. It is difficult to construct a linear $V(\rho)$ which satisfies the exponential-type inequality $V(\mathcal{F}(\rho))\leq -cV(\rho)$ only locally not globally based on the arguments in~\cite{benoist2017exponential}. However, if $V(\rho)$ is non-linear, based on the generator~\eqref{Eq:GeneratorSME}, we have,
\begin{align}
       \mathcal{A}_{\mathbf{j}}V(\rho)= \Tr\big(\mathfrak{D}_{\rho}V \mathcal{F}_{\mathbf{j}}(\rho)\big)&+\frac{1}{2}\Tr\big(\mathfrak{D}^2_{\rho}V\,\mathcal{G}_{\mathbf{j}}(\rho)^2\big)\nonumber\\
       &+\Big( V\big(\mathcal{J}_{D_\mathbf{j}}(\rho)\big)-V(\rho)-\Tr\big(\mathfrak{D}_{\rho}V \mathcal{H}_{\mathbf{j}}(\rho)\big)\Big)V_{D_{\mathbf{j}}}(\rho).\label{Eq:InfGen}
\end{align}
 Even if $V(\rho)$ does not satisfy $\Tr\big(\mathfrak{D}_{\rho}V \mathcal{F}_{\mathbf{j}}(\rho)\big)\leq -cV(\rho)$, by suitably choosing noise operators $C_{\mathbf{j}}$ and/or $D_{\mathbf{j}}$, the second and third terms of the right-hand side of~\eqref{Eq:InfGen} may compensate the first term in order to realize $\mathcal{A}_{\mathbf{j}}V(\rho)\leq -cV(\rho)$ for some $c>0$. 
Inspired by Lyapunov functions proposed in~\cite{liang2019exponential,liang2022GHZ}, we provide the following sufficient conditions ensuring that $\mathbf{\overline{H}1}$ are satisfied by constructing a exponential-type Lyapunov-like function. 

Define the following map on $\mathcal{B}(\mathbb{H}_R)$ for $k\in[m]$,
$$
\mathcal{F}_{k,R}(\varrho):=-i[H_{k,R},\varrho]+\mathsf{I}_{L_{k}}(\varrho)+\mathsf{I}_{C_{k}}(\varrho)+\mathsf{I}_{D_{k}}(\varrho)
$$
where
 $\mathsf{I}_{X}(\varrho):=X_{R}\varrho X^*_{R}-\frac{1}{2}\{X^*_{P}X_{P}+X^*_{R}X_{R},\varrho\}$.
Denote $$\bar{\mathfrak{l}}_{k}(X_R):=\inf\{\lambda\in\mathbb{R}:\mathcal{F}^*_{k,R}(X_R)\leq \lambda X_R\}$$ where $\mathcal{F}^*_{k,R}$ is the adjoint of $\mathcal{F}_{k,R}$ with respect to Hilbert-Schmidt inner product on $\mathcal{B}(\H_R)$. Define $\mathcal{E}_{k,\delta}(X):=-\delta\bar{\mathfrak{l}}_{k}(X_R)+\frac{\delta(1-\delta)}{2}\Gamma(X,C_{k})^2-\Phi_{\delta}(X,D_{k})$ with $\delta\in(0,1)$, where $\Gamma(X,C)$ and $\Phi_{\delta}(X,D)$ are defined in~\eqref{Eq:Gamma} and~\eqref{Eq:Theta} respectively.

\begin{lemma}
Suppose that there exists $X_R\in\mathcal{B}_{+}(\H_R)$ and $\mathbf{j}\in[m]$ such that $\H_S$ is invariant for $\mathbf{j}$-th subsystem and $\mathcal{E}_{\mathbf{j},\delta}(X)>0$ for some $\delta\in(0,1)$,
Consider the function $V(\rho)=\mathrm{Tr}(X\rho)^{\delta}$ where $X$ is the extension in $\mathcal{B}(\H)$ of $X_R$. Under these conditions, the following statements hold:
\begin{enumerate}
    \item There exist $c_1=c_1(X,\mathbf{d}_0)>0$ and $c_2=c_2(X,\mathbf{d}_0)>0$, such that for all $\rho\in\mathcal{S}(\H)$,
$
\big(\mathbf{d}_0(\rho)/c_2\big)^{2\delta}\leq V(\rho) \leq \big(\mathbf{d}_0(\rho)/c_1\big)^{\delta}.
$
\item There exists $\bar{r}=\bar{r}(X,C_{\mathbf{j}},D_{\mathbf{j}},\delta)>0$, such that for all $l\in (0,\bar{r})$, a positive constant $c(l)\in(0,\mathcal{E}_{k,\delta}(X))$ can be found, satisfying $\mathcal{A}_{\mathbf{j}}V(\rho)\leq -c(l) V(\rho)$ for $\rho\in \Lambda_{l}$.
\end{enumerate}
Consequently, $\mathbf{\overline{H}1}$ is satisfied. 
\label{Lemma:GES_structure}
\end{lemma}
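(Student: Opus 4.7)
The plan is to split the argument along the two enumerated claims and then use them to verify $\mathbf{\overline{H}1}$.

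For the first item, both bounds follow immediately from Lemma~\ref{Lemma:Relation_DisLya}. Indeed, the inequality $c_1\mathrm{Tr}(X\rho)\le \mathbf{d}_0(\rho)$ yields $\mathrm{Tr}(X\rho)^{\delta}\le (\mathbf{d}_0(\rho)/c_1)^{\delta}$, while squaring the other inequality $\mathbf{d}_0(\rho)\le c_2\sqrt{\mathrm{Tr}(X\rho)}$ gives $\mathrm{Tr}(X\rho)\ge (\mathbf{d}_0(\rho)/c_2)^2$, hence $V(\rho)\ge (\mathbf{d}_0(\rho)/c_2)^{2\delta}$. No new work is needed here.

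The substance is in the second item. The plan is to apply It\^o's formula to $V(\rho)=\theta(\rho)^{\delta}$, where $\theta(\rho):=\mathrm{Tr}(X\rho)$, using the infinitesimal generator \eqref{Eq:InfGen}. Since $V$ depends on $\rho$ only through the linear functional $\theta$, the derivatives collapse to scalar chain-rule expressions:
\begin{align*}
\mathrm{Tr}\bigl(\mathfrak{D}_{\rho}V\,\mathcal{F}_{\mathbf{j}}(\rho)\bigr)&=\delta\,\theta(\rho)^{\delta-1}\mathrm{Tr}(X\mathcal{F}_{\mathbf{j}}(\rho)),\\
\tfrac12\mathrm{Tr}\bigl(\mathfrak{D}^2_{\rho}V\,\mathcal{G}_{\mathbf{j}}(\rho)^2\bigr)&=\tfrac{\delta(\delta-1)}{2}\,\theta(\rho)^{\delta-2}\,\mathrm{Tr}(X\mathcal{G}_{\mathbf{j}}(\rho))^2.
\end{align*}
For the drift, the invariance of $\H_S$ combined with the definition of $\bar{\mathfrak{l}}_{\mathbf{j}}(X_R)$ gives $\mathrm{Tr}(X\mathcal{F}_{\mathbf{j}}(\rho))=\mathrm{Tr}(\mathcal{F}^*_{\mathbf{j},R}(X_R)\rho_R)\le \bar{\mathfrak{l}}_{\mathbf{j}}(X_R)\,\theta(\rho)$, so the drift contribution to $\mathcal{A}_{\mathbf{j}}V/V$ is bounded above by $\delta\,\bar{\mathfrak{l}}_{\mathbf{j}}(X_R)$. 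Writing $\eta(\rho):=\mathrm{Tr}(X\mathcal{J}_{D_\mathbf{j}}(\rho))/\theta(\rho)$, the jump contribution factors as $\bigl(\eta(\rho)^{\delta}-1-\delta(\eta(\rho)-1)\bigr)v_{D_\mathbf{j}}(\rho)\,V(\rho)$, which is non-positive since $u\mapsto u^{\delta}-1-\delta(u-1)$ attains its maximum $0$ at $u=1$.

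Dividing through by $V(\rho)$, I obtain
\begin{equation*}
\frac{\mathcal{A}_{\mathbf{j}}V(\rho)}{V(\rho)}\le \delta\,\bar{\mathfrak{l}}_{\mathbf{j}}(X_R)-\frac{\delta(1-\delta)}{2}\,\Psi_{C}(\rho)+\Psi_{D}(\rho),
\end{equation*}
where $\Psi_C(\rho):=\mathrm{Tr}(X\mathcal{G}_{\mathbf{j}}(\rho))^2/\theta(\rho)^2$ and $\Psi_D(\rho)$ is the jump-factor above. The key claim is then a continuity statement: as $\mathbf{d}_0(\rho)\to 0$, one has $\Psi_C(\rho)\to \Gamma(X,C_{\mathbf{j}})^2$ and $\Psi_D(\rho)\to -\Phi_{\delta}(X,D_{\mathbf{j}})$, so that the right-hand side converges to $-\mathcal{E}_{\mathbf{j},\delta}(X)<0$. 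This is the main technical obstacle: I would verify it by fixing an arbitrary accumulation point $\rho^*\in\mathcal{I}(\H_S)$ of a sequence $\rho_n\to\rho^*$ and computing the block decomposition of $\mathcal{G}_{\mathbf{j}}(\rho)$ and $\mathcal{J}_{D_\mathbf{j}}(\rho)$ to show the ratios remain well-defined in the limit; the invariance conditions from Theorem~\ref{Thm:Invariance} ensure that the leading-order contributions involve only the $R$-blocks of the operators, matching the definitions~\eqref{Eq:Gamma} and~\eqref{Eq:Theta}.

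Once this continuity is established, I pick $\bar r>0$ small enough that for every $l\in(0,\bar r)$ and every $\rho\in\Lambda_l$ one has $\Psi_C(\rho)\ge \Gamma(X,C_\mathbf{j})^2-\varepsilon_l$ and $\Psi_D(\rho)\le -\Phi_{\delta}(X,D_{\mathbf{j}})+\varepsilon_l$ with $\varepsilon_l$ small enough to keep the total expression below $-c(l)$ for some $c(l)\in(0,\mathcal{E}_{\mathbf{j},\delta}(X))$. This yields $\mathcal{A}_{\mathbf{j}}V(\rho)\le -c(l)V(\rho)$ on $\Lambda_l$. Finally, combining the two items, $\mathbf{\overline{H}1}$ is verified with $\mu_1(s):=(s/c_2)^{2\delta}$, $\mu_2(s):=(s/c_1)^{\delta}$, and $\nu(s):=c(l)(s/c_2)^{2\delta}$, all of which lie in $\mathcal{K}$ by construction.
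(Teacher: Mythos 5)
Your overall route is the same as the paper's: item 1 via Lemma~\ref{Lemma:Relation_DisLya}, then for item 2 the scalar chain-rule computation of $\mathcal{A}_{\mathbf{j}}V$ for $V=\mathrm{Tr}(X\rho)^{\delta}$, the drift bound $\mathrm{Tr}(X\mathcal{F}_{\mathbf{j}}(\rho))=\mathrm{Tr}(\mathcal{F}^*_{\mathbf{j},R}(X_R)\rho_R)\le\bar{\mathfrak{l}}_{\mathbf{j}}(X_R)\mathrm{Tr}(X\rho)$, asymptotic control of the diffusion and jump ratios as $\rho\to\mathcal{I}(\H_S)$, and a continuity argument to extract $\bar r$ and $c(l)$. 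The "main technical obstacle" you defer is precisely the content of the paper's appendix Lemmas~\ref{Lemma:PositiveLim_G} and~\ref{Lemma:PositiveLim_Q}, which you could simply invoke rather than re-derive.

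There are, however, two inaccuracies in your key asymptotic claim. First, the correct statements are one-sided: $\liminf_{\rho\to\mathcal{I}(\H_S)}\Psi_C(\rho)\ge\Gamma(X,C_{\mathbf{j}})^2$ and $\limsup_{\rho\to\mathcal{I}(\H_S)}\Psi_D(\rho)\le\Phi_{\delta}(X,D_{\mathbf{j}})$; these are eigenvalue-based bounds, not exact limits, so "$\Psi_C\to\Gamma^2$" and "$\Psi_D\to\cdots$" overstate what holds (fortunately only the one-sided directions are needed). Second, and more substantively, your sign for the jump term is off: since $\mathcal{E}_{\mathbf{j},\delta}(X)=-\delta\bar{\mathfrak{l}}_{\mathbf{j}}(X_R)+\frac{\delta(1-\delta)}{2}\Gamma(X,C_{\mathbf{j}})^2-\Phi_{\delta}(X,D_{\mathbf{j}})$, your inequality $\mathcal{A}_{\mathbf{j}}V/V\le\delta\bar{\mathfrak{l}}_{\mathbf{j}}(X_R)-\frac{\delta(1-\delta)}{2}\Psi_C+\Psi_D$ yields the target bound $-\mathcal{E}_{\mathbf{j},\delta}(X)$ only if $\limsup\Psi_D\le+\Phi_{\delta}(X,D_{\mathbf{j}})$; with your claim $\Psi_D\to-\Phi_{\delta}(X,D_{\mathbf{j}})$ the right-hand side would tend to $-\mathcal{E}_{\mathbf{j},\delta}(X)-2\Phi_{\delta}(X,D_{\mathbf{j}})$, and moreover, since you correctly note $\Psi_D\le 0$ pointwise while $\Phi_{\delta}$ need not be nonnegative (it is a worst-case eigenvalue upper bound on a nonpositive quantity, not its value), the claimed limit is self-contradictory whenever $\Phi_{\delta}(X,D_{\mathbf{j}})<0$. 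Replacing these claims by the one-sided bounds of Lemmas~\ref{Lemma:PositiveLim_G} and~\ref{Lemma:PositiveLim_Q} gives $\limsup_{\rho\to\mathcal{I}(\H_S)}\mathcal{A}_{\mathbf{j}}V(\rho)/V(\rho)\le-\mathcal{E}_{\mathbf{j},\delta}(X)<0$, after which your continuity step and your explicit choice of $\mu_1,\mu_2,\nu$ for $\mathbf{\overline{H}1}$ are fine and match the paper's conclusion.
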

\begin{proof}
The first statement can be easily verified by using Lemma~\ref{Lemma:Relation_DisLya}. Next, we show the second statement.
The infinitesimal generator of $V(\rho)$ relative to $\mathbf{j}$-th subsystem is given by
\begin{align*}
\mathcal{A}_{\mathbf{j}}V(\rho)=V(\rho)\Bigg[&\delta\frac{\Tr(X\mathcal{F}_{\mathbf{j}}(\rho))}{\Tr(X\rho)}-\frac{\delta(1-\delta)}{2}\left|\frac{\Tr(X\mathcal{G}_{\mathbf{j}}(\rho))}{\Tr(X\rho)}\right|^2\\
&+\left(\left(\frac{\Tr(X \mathcal{J}_{D_\mathbf{j}}(\rho))}{\Tr(X\rho)}\right)^{\delta}-(1-\delta)-\delta\frac{\Tr(X \mathcal{J}_{D_\mathbf{j}}(\rho))}{\Tr(X\rho)}\right)v_{D_{\mathbf{j}}}(\rho) \Bigg].
\end{align*}
From the definition of $\bar{\mathfrak{l}}_{\mathbf{j}}(X_R)$, Lemma~\ref{Lemma:PositiveLim_G} and Lemma~\ref{Lemma:PositiveLim_Q}, we have the following estimates  
$$
\Tr(X\mathcal{F}_{\mathbf{j}}(\rho))=\Tr(\mathcal{F}^*_{\mathbf{j},R}(X_R)\rho_R)\leq \bar{\mathfrak{l}}_{\mathbf{j}}(X_R)\Tr(X_R\rho_R)= \bar{\mathfrak{l}}_{\mathbf{j}}(X_R)\Tr(X\rho)
$$
and 
\begin{align*}
    &\limsup_{\rho\rightarrow \mathcal{I}(\H_S)}\left[\left(\frac{\Tr(X \mathcal{J}_{D_\mathbf{j}}(\rho))}{\Tr(X\rho)}\right)^{\delta}-(1-\delta)-\delta\frac{\Tr(X \mathcal{J}_{D_\mathbf{j}}(\rho))}{\Tr(X\rho)}\right]v_{D_{\mathbf{j}}}(\rho)-\frac{\delta(1-\delta)}{2}\left|\frac{\Tr(X\mathcal{G}_{\mathbf{j}}(\rho))}{\Tr(X\rho)}\right|^2\\
    &\leq \Phi_{\delta}(X,D_{\mathbf{j}})-\frac{\delta(1-\delta)}{2}\Gamma(X,C_{\mathbf{j}})^2.
\end{align*}
It implies that
$
\limsup_{\rho\rightarrow\mathcal{I}(\H_S)}\mathcal{A}_{\mathbf{j}}V(\rho)/V(\rho)\leq -\mathcal{E}_{\mathbf{j},\delta}(X)<0.
$
Then, we can conclude the proof by applying the continuity of $V(\rho)$.    
\end{proof}

Next, we consider a special case where $\mathbf{j}$-th subsystem is undergoing Quantum Non-Demolition (QND) measurements~\cite{benoist2014large,liang2024model}. Consider a decomposition of the whole Hilbert space: $\H=\H_S\oplus\H^1_{R}\oplus \dots \oplus \H^{\mathbf{d}}_{R}$ with $\mathbf{d}\leq d-1$. Denote by $\Pi_0,\Pi_1,\dots,\Pi_{\mathbf{d}}$ the orthogonal projections on $\H_S,\H^1_{R}, \dots,\H^{\mathbf{d}}_{R}$ respectively. We then make the following assumption.
\begin{itemize}
    \item[$\mathbf{\overline{H}4}$:]  $H_0$, $H_{\mathbf{j}}$, $L_{\mathbf{j}}$, $C_{\mathbf{j}}=\sum^{\mathbf{d}}_{i=0}c_{i}\Pi_{i} $ and $D_{\mathbf{j}}=\sum^{\mathbf{d}}_{i=0}a_{i}\Pi_{i}$ with $c_i,a_i\in \mathbb{C}$ are simultaneously block-diagonalizable with respect to the decomposition above.
\end{itemize}

\begin{lemma}
 Assume that $\mathbf{\overline{H}4}$ is satisfied. Suppose that at least one of the following two conditions holds:
    \begin{enumerate}
        \item $\underline{c}:=\min_{i\in[\mathbf{d}]}(\mathbf{Re}\{c_i\}-\mathbf{Re}\{c_0\})^2>0$,
        \item $\underline{a}:=\min_{i\in[\mathbf{d}]}(|a_i|-|a_0|)^2>0$.
    \end{enumerate}
Consider the function $V(\rho)=\sum^{\mathbf{d}}_{i=1}\sqrt{\Tr(\Pi_{i}\rho)}$. The following statements hold:
    \begin{enumerate}
    \item There exist $c_1=c_1(\mathbf{d}_0)>0$ and $c_2=c_2(\mathbf{d}_0)>0$, such that for all $\rho\in\mathcal{S}(\H)$,
$
\mathbf{d}_0(\rho)/c_2\leq V(\rho) \leq \sqrt{\mathbf{d}_0(\rho)/c_1}.
$
\item There exists $\bar{r}=\bar{r}(C_{\mathbf{j}},A_{\mathbf{j}})>0$, such that for all $l\in (0,\bar{r})$, a positive constant $c(l)\in(0,(\underline{c}+\underline{a})/2)$ can be found, satisfying $\mathcal{A}_{\mathbf{j}}V(\rho)\leq -c(l) V(\rho)$ for $\rho\in \Lambda_{l}$.
\end{enumerate}
Consequently, $\mathbf{\overline{H}1}$ is satisfied. 
\end{lemma}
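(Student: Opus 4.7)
The strategy exploits the QND hypothesis $\mathbf{\overline{H}4}$: since every $\Pi_i$ commutes with all of $H_0, H_{\mathbf{j}}, L_{\mathbf{j}}, C_{\mathbf{j}}, D_{\mathbf{j}}$, each population $p_i(\rho):=\mathrm{Tr}(\Pi_i\rho)$ is a martingale under the $\mathbf{j}$-th subsystem, and $V(\rho)=\sum_{i=1}^{\mathbf{d}}\sqrt{p_i(\rho)}$ is therefore a sum of concave functions of martingales, so that its It\^o corrections---both diffusive and jump---are automatically nonpositive. I would start by invoking Theorem~\ref{Thm:Invariance} to confirm that $\H_S$ is invariant for $\mathcal{F}_{\mathbf{j}}$: under $\mathbf{\overline{H}4}$ all off-diagonal blocks of $L_{\mathbf{j}}, C_{\mathbf{j}}, D_{\mathbf{j}}$ and the $\H_S\!\to\!\H_R$ block of $H_0+H_{\mathbf{j}}$ vanish, and the invariance conditions reduce to trivial identities.

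For statement (1), take $X=\mathds{1}-\Pi_0=\sum_{i=1}^{\mathbf{d}}\Pi_i$, so that $\mathrm{Tr}(X\rho)=\sum_{i=1}^{\mathbf{d}}p_i$. The elementary chain
\[
\sqrt{\sum_{i=1}^{\mathbf{d}}p_i}\;\le\;V(\rho)\;\le\;\sqrt{\mathbf{d}}\;\sqrt{\sum_{i=1}^{\mathbf{d}}p_i},
\]
(the left inequality from $(\sqrt{a}+\sqrt{b})^2\ge a+b$, the right from Cauchy--Schwarz), combined with Lemma~\ref{Lemma:Relation_DisLya} applied to this $X$, yields $\mathbf{d}_0(\rho)/c_2\le V(\rho)\le\sqrt{\mathbf{d}_0(\rho)/c_1}$ after adjusting the constants.

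For statement (2) the heart is a closed-form computation of $\mathcal{A}_{\mathbf{j}}\sqrt{p_i}$. Commutativity of $\Pi_i$ with the coherent, dissipative and jump operators immediately yields $\mathrm{Tr}(\Pi_i\mathcal{F}_{\mathbf{j}}(\rho))=0$, confirming that $p_i$ is driftless. Setting $\bar c:=\sum_j\mathrm{Re}(c_j)p_j$ and $V_j(\rho):=\sum_j|a_j|^2 p_j=v_{D_{\mathbf{j}}}(\rho)$, one obtains the explicit identities
\[
\mathrm{Tr}\bigl(\Pi_i\mathcal{G}_{\mathbf{j}}(\rho)\bigr)=2\bigl(\mathrm{Re}(c_i)-\bar c\bigr)\,p_i,\qquad p_i\bigl(\mathcal{J}_{D_{\mathbf{j}}}(\rho)\bigr)=\frac{|a_i|^2 p_i}{V_j(\rho)}.
\]
Substituting these into the generator formula~\eqref{Eq:InfGen} applied to $\sqrt{p_i}$, the diffusive It\^o correction contributes $-\tfrac{1}{2}(\mathrm{Re}(c_i)-\bar c)^2\sqrt{p_i}$ and the compensated jump piece collapses---via the key algebraic identity
\[
\Bigl[\sqrt{\tfrac{|a_i|^2 p_i}{V_j}}-\sqrt{p_i}-\tfrac{1}{2\sqrt{p_i}}\bigl(\tfrac{|a_i|^2 p_i}{V_j}-p_i\bigr)\Bigr]V_j=-\tfrac{\sqrt{p_i}}{2}\bigl(\sqrt{V_j}-|a_i|\bigr)^2
\]
---to produce the clean expression
\[
\mathcal{A}_{\mathbf{j}}\sqrt{p_i}=-\tfrac{\sqrt{p_i}}{2}\Bigl[(\mathrm{Re}(c_i)-\bar c)^2+\bigl(\sqrt{V_j(\rho)}-|a_i|\bigr)^2\Bigr].
\]

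Summing over $i\in[\mathbf{d}]$ and letting $\rho\to\mathcal{I}(\H_S)$ (so that $p_0\to 1$, $p_i\to 0$ for $i\ge 1$, $\bar c\to\mathrm{Re}(c_0)$ and $\sqrt{V_j}\to|a_0|$), each bracket tends to $(\mathrm{Re}(c_i)-\mathrm{Re}(c_0))^2+(|a_0|-|a_i|)^2\ge\underline{c}+\underline{a}>0$ by hypothesis. Continuity of $\rho\mapsto\mathcal{A}_{\mathbf{j}}V(\rho)/V(\rho)$ off $\mathcal{I}(\H_S)$ then delivers a neighbourhood $\Lambda_{\bar r}$ on which $\mathcal{A}_{\mathbf{j}}V(\rho)\le -c(l)V(\rho)$ for any $c(l)\in(0,(\underline{c}+\underline{a})/2)$, and together with statement (1) this verifies $\mathbf{\overline{H}1}$ with $\mu_1(r)=r/c_2$, $\mu_2(r)=\sqrt{r/c_1}$ and $\nu(r)=c(l)r/c_2$. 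The principal obstacle I anticipate is the algebraic simplification of the jump contribution---recognising the compensated increment as the perfect square $-\tfrac{\sqrt{p_i}}{2}(\sqrt{V_j}-|a_i|)^2$ is precisely what makes condition~(2) alone (when $\underline{c}=0$) sufficient to ensure contraction, whereas the degenerate case $V_j(\rho)\equiv 0$ (all $a_i=0$) must then be handled separately with only the diffusive piece, under condition~(1).
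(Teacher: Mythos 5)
Your proposal is correct and follows essentially the same route as the paper: your closed-form computation $\mathcal{A}_{\mathbf{j}}\sqrt{p_i}=-\tfrac{\sqrt{p_i}}{2}\bigl[(\mathrm{Re}(c_i)-\bar c)^2+(\sqrt{v_{D_{\mathbf{j}}}(\rho)}-|a_i|)^2\bigr]$ is exactly the generator identity the paper states (your perfect-square collapse of the compensated jump term is the algebra the paper leaves implicit), and the concluding limit toward $\mathcal{I}(\H_S)$ plus continuity, together with Lemma~\ref{Lemma:Relation_DisLya} for statement (1), coincides with the paper's argument.
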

\begin{proof}
 The first statement can be easily verified by using Lemma~\ref{Lemma:Relation_DisLya}. Next, we show the second statement. 
The infinitesimal generator of $V(\rho)$ relative to $\mathbf{j}$-th subsystem satisfies
\begin{align*}
\mathcal{A}_{\mathbf{j}}V(\rho)&=-\frac{1}{2}\sum^{\mathbf{d}}_{i=1}\sqrt{\Tr(\Pi_{i}\rho)}\Big[\big(\mathbf{Re}\{c_i\}-\Tr(C_{\mathbf{j}}\rho-\rho C^*_{\mathbf{j}})/2 \big)^2+ \big(|a_i-\sqrt{\Tr(D_{\mathbf{j}}\rho D_{\mathbf{j}}^*)}| \big)^2 \Big]\\
&\leq -\frac{1}{2}\mathbf{C}(\rho)V(\rho),
\end{align*}
where $\mathbf{C}(\rho):=\min_{i\in[\mathbf{d}]}\big\{\big(\mathbf{Re}\{c_i\}-\Tr(C_{\mathbf{j}}\rho-\rho C^*_{\mathbf{j}})/2 \big)^2+ \big(|a_i|-\sqrt{\Tr(D_{\mathbf{j}}\rho D_{\mathbf{j}}^*)} \big)^2\big\}$. Since 
\begin{align*}
    \lim_{\rho \rightarrow \mathcal{I}(\H_S)}\mathbf{C}(\rho) = \min_{i\in[\mathbf{d}]}\big\{\big(\mathbf{Re}\{c_i\}-\mathbf{Re}\{c_0\} \big)^2+ \big(|a_i|-|a_0| \big)^2\big\}\geq \underline{c}+\underline{a}>0,
\end{align*}
we have
$
\limsup_{\rho\rightarrow\mathcal{I}(\H_S)}\mathcal{A}_{\mathbf{j}}V(\rho)/V(\rho)\leq -(\underline{c}+\underline{a})/2<0.
$
Then, we can conclude the proof by applying the continuity of $V(\rho)$.
\end{proof}



By applying the similar arguments as in Proposition~\ref{Proposition:GES},  we provided the sufficient conditions ensuring the almost sure of the target subspace for the switched SME~\eqref{eq:sme_switch}.

\begin{proposition}
Consider the switched SME~\eqref{eq:sme_switch} under $\sigma_2$. Suppose that the assumptions of Lemma~\ref{Lemma:GES_structure} and Lemma~\ref{Lemma:NeverReach_SME} are satisfied, and $\mathbf{\overline{H}2}$ with $l$ determined in Lemma~\ref{Lemma:GES_structure}, Assumption~\ref{ass:partition3}. 
Then, $\mathcal{H}_S$ is GES almost surely with the sample Lyapunov exponent less than or equal to $\big(2\delta\bar{\mathfrak{l}}_{\mathbf{j}}(X)-\delta\Gamma(X,C_{\mathbf{j}})^2+2(\Phi_{\delta}(X,D_{\mathbf{j}})+\Psi_{\delta}(X,D_{\mathbf{j}})\big)/2\delta$, where 
$$
0\geq \Psi_{\delta}(X,D_{\mathbf{j}})\geq \limsup_{\rho\rightarrow \mathcal{I}(\mathcal{H}_S)}\left[\log\left(\left(\frac{\Tr(X\mathcal{J}_{\mathbf{j}}(\rho))}{\Tr(X\rho)}\right)^{\delta}\right)+1-\left(\frac{\Tr(X\mathcal{J}_{\mathbf{j}}(\rho))}{\Tr(X\rho)}\right)^{\delta}\right]\Tr(D_{\mathbf{j}}\rho D_{\mathbf{j}}^*).
$$
\label{Proposition:GES_SME}
\end{proposition}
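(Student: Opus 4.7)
The plan is to mimic the proof of Proposition~\ref{Proposition:GES}, transporting every step from the Euclidean setting to the quantum-state setting via the extended Lyapunov function $V(\rho)=\mathrm{Tr}(X\rho)^{\delta}$ supplied by Lemma~\ref{Lemma:GES_structure}. The assumptions of that lemma together with $\mathbf{\overline{H}2}$ give exactly the two hypotheses $\mathbf{\overline{H}1}$ and $\mathbf{\overline{H}2}$ needed for Proposition~\ref{Prop:GAS_SME}, so the first step is to invoke Proposition~\ref{Prop:GAS_SME} to conclude that under $\sigma_2$ the solution is non-explosive, the switching sequence is finite almost surely, and eventually the $\mathbf{j}$-th subsystem is active inside $\Lambda_l$ with $\rho_t\to\mathcal{I}(\mathbb{H}_S)$ almost surely. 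It remains only to estimate the rate.

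To do so, I would first extend $V$ to a strictly positive $C^2$ function $\bar V$ on $\mathcal{S}(\mathbb{H})\setminus\mathcal{I}(\mathbb{H}_S)$ that coincides with $V$ on $\Lambda_l$, exploiting Lemma~\ref{Lemma:NeverReach_SME} to ensure that $\bar V(\rho_t)>0$ for all $t\ge 0$ almost surely. Then I would apply Itô's formula to $\log \bar V(\rho_t)$. Exactly as in Proposition~\ref{Proposition:GES}, one obtains a decomposition
\begin{equation*}
\log \bar V(\rho_t)=\log \bar V(\rho_0)+\int_0^t \sum_{k=1}^m u_s^k\,\mathcal{A}_k\log \bar V(\rho_s)\,ds+M_t^1+M_t^2,
\end{equation*}
where the continuous martingale $M^1$ and purely-discontinuous martingale $M^2$ are built respectively from the diffusion generators $\mathcal{G}_k$ and from $\log(\bar V(\mathcal{J}_{D_k}(\rho))/\bar V(\rho))$ integrated against the compensated Poisson measures. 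The generator itself expands, as in~\eqref{Eq:InfGen}, into a drift piece $\mathcal{A}_k \bar V/\bar V$, a quadratic diffusion correction $-|\mathrm{Tr}(X\mathcal{G}_k(\rho))|^2/(2\bar V(\rho)^2)$ and a jump correction written in terms of the logarithmic functional $\bar{\mathfrak{V}}_k(\rho)$.

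Next I would apply the exponential martingale inequality with jumps (\cite[Theorem 5.2.9]{applebaum2009levy}) to $M=M^1+M^2$ with parameter $\theta\in(0,1)$ on each integer interval $[0,\mathfrak{n}]$, and use Borel–Cantelli to obtain an almost-sure pathwise bound of the form $M_t\le \tfrac{\theta}{2}\langle M^1,M^1\rangle_t+\theta \mathfrak{n}+\mathfrak{f}_\theta(t)$ for $t\le \mathfrak{n}$ and $\mathfrak{n}$ large. Because Proposition~\ref{Prop:GAS_SME} already delivers $\rho_t\to\mathcal{I}(\mathbb{H}_S)$ a.s. and the eventual activation of the $\mathbf{j}$-th subsystem, I may restrict the averages in the resulting inequality to the interval where $\rho_t\in\Lambda_{\mathsf{r}}$ and $u^{\mathbf{j}}_s=1$ for an arbitrarily small $\mathsf{r}>0$. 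Passing to $\limsup_{t\to\infty}\tfrac1t(\cdot)$, sending $\theta\to 0$, and then $\mathsf{r}\to 0$, the three ingredients of Lemma~\ref{Lemma:GES_structure} and its asymptotic analysis give the bounds $\mathcal{A}_{\mathbf{j}}\bar V/\bar V\to -\mathcal{E}_{\mathbf{j},\delta}(X)$, $|\mathrm{Tr}(X\mathcal{G}_{\mathbf{j}}(\rho))/\mathrm{Tr}(X\rho)|^2\to \Gamma(X,C_{\mathbf{j}})^2$ (up to the scaling factor $\delta$), and the upper limit of the jump logarithmic term is controlled by $\Psi_\delta(X,D_{\mathbf{j}})$ in the statement. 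Combining these and rewriting through the two-sided bound $\bar V(\rho)^{1/\delta}\asymp \mathbf{d}_0(\rho)$ of Lemma~\ref{Lemma:GES_structure}(1) translates the exponent into the claimed expression for $\limsup_{t\to\infty}\tfrac1t\log \mathbf{d}_0(\rho_t)$.

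The main obstacle, as in the jump-diffusion case of Proposition~\ref{Proposition:GES}, will be to justify the asymptotic identifications rigorously. Specifically one must (i) verify that $\mathfrak{f}_\theta(t)\to 0$ pointwise so that $M^2$ can be absorbed analogously to the diffusion part—this requires careful Taylor expansion of $(u^\theta-1-\theta\log u)$ with $u=\bar V(\mathcal{J}_{D_{\mathbf{j}}}(\rho))/\bar V(\rho)$ uniformly on the trajectory, where $\bar V$ has a boundary of definition at $\mathcal{I}(\mathbb{H}_S)$; (ii) replace the a priori-unbounded integrands by their asymptotic values $\Gamma(X,C_{\mathbf{j}})^2$ and $\Psi_\delta(X,D_{\mathbf{j}})$ by exchanging the order of $\limsup_t$ and $\sup_{\rho\in\Lambda_{\mathsf{r}}\setminus\mathcal{I}(\mathbb{H}_S)}$ using the almost-sure convergence $\rho_t\to\mathcal{I}(\mathbb{H}_S)$ and continuity; and (iii) bound the Dol\'eans–Dade exponent contributions in a way analogous to~\eqref{Eq:Cond_h}, namely by noting that $\mathcal{J}_{D_{\mathbf{j}}}(\rho)$ remains bounded in $\mathcal{S}(\mathbb{H})$ and $v_{D_{\mathbf{j}}}(\rho)$ is bounded, so that the jump kernel plays the role of the boundedness assumption on $h_{\mathbf{j}}$ used in the Euclidean case.
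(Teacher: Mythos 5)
Your proposal follows essentially the same route as the paper's proof: invoke Lemma~\ref{Lemma:NeverReach_SME} for non-attainability and the compactness of $\mathcal{S}(\H)$ as the substitute for condition~\eqref{Eq:Cond_h}, then rerun the argument of Proposition~\ref{Proposition:GES} on $V(\rho)=\Tr(X\rho)^{\delta}$ (It\^o on $\log V$, the exponential martingale inequality with jumps, Borel--Cantelli, and asymptotic identification of the drift, diffusion and jump contributions on small neighborhoods $\Lambda_{\mathsf r}$ where the $\mathbf{j}$-th subsystem is active). The only step the paper carries out that you leave implicit is the explicit evaluation of the jump-logarithm term via the ratio $\Tr(X\mathcal{J}_{\mathbf{j}}(\rho))/\Tr(X\rho)$ (as in Lemma~\ref{Lemma:PositiveLim_Q}) and the function $g_{\delta}$, which identifies and bounds the quantity $\Psi_{\delta}(X,D_{\mathbf{j}})$ appearing in the statement; otherwise the two arguments coincide.
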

\begin{proof}
  Lemma~\ref{Lemma:NeverReach_SME} ensures that, for $\mathbf{j}$-subsystem, the related trajectory $\rho_t$ cannot attain the target subspace $\mathcal{I}(\H_S)$ in finite time almost surely. Due to the compactness of $\mathcal{S}(\H)$, the associated condition~\eqref{Eq:Cond_h} for switched SME~\eqref{eq:sme_switch} holds. Thus, by applying the similar arguments as in the proof of Proposition~\ref{Proposition:GES} on the function $V(\rho)=\Tr(X\rho)^{\delta}$ with $\delta\in(0,1)$, we can show the almost sure GES of the target subspace and provide an estimate of the Lyapunov exponent. Now, we investigate the last term in the estimation of the Lyapunov exponent, $\Psi_{\delta}(X,D_{\mathbf{j}})$.
Based on the condition (iv) of Proposition~\ref{Proposition:GES}, for some constant $c>0$ large enough, we have
\begin{align*}
    &\int_{|z|\leq c}\log\left(\frac{V\big(\rho+\mathcal{Q}_{\mathbf{j}}(\rho,z)\big)}{V(\rho)}\right)+1-\frac{V\big(\rho+\mathcal{Q}_{\mathbf{j}}(\rho,z)\big)}{V(\rho)} \d z\\
    &=\left[\log\left(\left(\frac{\Tr(X\mathcal{J}_{\mathbf{j}}(\rho))}{\Tr(X\rho)}\right)^{\delta}\right)+1-\left(\frac{\Tr(X\mathcal{J}_{\mathbf{j}}(\rho))}{\Tr(X\rho)}\right)^{\delta}\right]\Tr(D_{\mathbf{j}}\rho D_{\mathbf{j}}^*)\leq 0.
\end{align*}
Based on the invariance assumption of $\mathbf{j}$-th subsystem and proof of Proposition~\ref{Lemma:PositiveLim_Q}, we have
\begin{align*}
    \frac{\Tr(X\mathcal{J}_{\mathbf{j}}(\rho))}{\Tr(X\rho)} = \frac{\Tr(D^*_{\mathbf{j},R}X_RD_{\mathbf{j},R}\rho_R)}{\Tr(X_R\rho_R)\Tr(D_{\mathbf{j}}\rho D^*_{\mathbf{j}})}.
\end{align*}
If $D^*_{\mathbf{j},S}D_{\mathbf{j},S}>0$, then 
\begin{align*}
    \underline{r}(X,D_{\mathbf{j}}) \leq \lim_{\rho\rightarrow \mathcal{I}(\H_S)} \frac{\Tr(D^*_{\mathbf{j},R}X_RD_{\mathbf{j},R}\rho_R)}{\Tr(X_R\rho_R)\Tr(D_{\mathbf{j}}\rho D^*_{\mathbf{j}})}\leq \bar{r}(X,D_{\mathbf{j}})
\end{align*}
where $\underline{r}(X,D_{\mathbf{j}}):=\frac{\underline{\lambda}(D^*_{\mathbf{j},R} X_R D_{\mathbf{j},R})}{\bar{\lambda}(D^*_{\mathbf{j},S}D_{\mathbf{j},S})\bar{\lambda}(X_R)}$ and $\bar{r}(X,D_{\mathbf{j}}):=\frac{\bar{\lambda}(D^*_{\mathbf{j},R} X_R D_{\mathbf{j},R})}{\underline{\lambda}(D^*_{\mathbf{j},S}D_{\mathbf{j},S})\underline{\lambda}(X_R)}$. Define $g_{\delta}(x) = \delta\log x+1-x^{\delta}$, which is non-positive for all $x\geq 0$ and equal to zero if and only if $x=1$. We deduce,
\begin{align*}
    &\limsup_{\rho\rightarrow \mathcal{I}(\H_S)}\left[\log\left(\left(\frac{\Tr(X\mathcal{J}_{\mathbf{j}}(\rho))}{\Tr(X\rho)}\right)^{\delta}\right)+1-\left(\frac{\Tr(X\mathcal{J}_{\mathbf{j}}(\rho))}{\Tr(X\rho)}\right)^{\delta}\right]\Tr(D_{\mathbf{j}}\rho D_{\mathbf{j}}^*)\\
    &\leq 
    \begin{cases}
        g_{\delta}\big( \bar{r}(X,D_{\mathbf{j}}) \big)\underline{\lambda}(D^*_{\mathbf{j},S}D_{\mathbf{j},S})<0,& \text{if }\bar{r}(X,D_{\mathbf{j}})<1;\\
        g_{\delta}\big( \underline{r}(X,D_{\mathbf{j}}) \big)\underline{\lambda}(D^*_{\mathbf{j},S}D_{\mathbf{j},S})<0,& \text{if }\underline{r}(X,D_{\mathbf{j}})>1.
    \end{cases}
\end{align*}  
\end{proof}


\section{Conclusion}

This paper addresses the stabilization of jump-diffusion stochastic systems, both classical and quantum, via a novel hysteresis switching strategy. We rigorously establish the well-posedness of switched jump-diffusion SDEs and SMEs, and derive sufficient conditions to ensure global asymptotic or exponential stability of a designated target state. In contrast to conventional approaches that require global Lyapunov functions, either common or multiple, or impose stability conditions on every subsystem, our method leverages local Lyapunov-like arguments and state-dependent switching to achieve global stabilization with finite switching almost surely.

We further apply this framework to quantum feedback control systems modeled by jump-diffusion stochastic master equations, incorporating both diffusive and jump dynamics. The proposed method relaxes the restrictive invariance and convergence assumptions commonly found in previous work, thus enhancing the practical applicability of quantum control in experimental settings.

Future directions include extending the approach to handle mismatched initial states and model uncertainties~\cite{liang2021robustness}, as well as unmodeled dissipative effects~\cite{liang2025exploring}, which pose significant challenges to traditional feedback designs. Switching control offers a promising path forward in these realistic scenarios. Additionally, we aim to incorporate constraints such as minimum or average dwell time between switching events to avoid excessively frequent mode transitions.

\appendix
\section{Appendix}

\paragraph{Infinitesimal generator}
Given a stochastic differential equation $dx(t)= f(x(t-))dt+g(x(t-))dW(t)+\int_{|z|\leq c}h(x(t-),z)\tilde{N}(dt,dz)$, where $x(t)$ takes values in $Q\subset \mathbb{R}^d,$ the infinitesimal generator is the operator $\mathcal{A}$ acting on twice continuously differentiable functions $V: Q\times \mathbb{R}_{+}  \rightarrow \mathbb{R}$ in the following way
\begin{align}
\mathcal{A}V(x,t):=\frac{\partial V(x,t)}{\partial t}&+\mathfrak{D}_xV f(x)+\frac12 \mathrm{Tr}\Big(g^*(x)  \mathfrak{D}^2_x V  g(x)\Big)\nonumber\\
&+\int_{|z|\leq c}\Big(V\big(x+h(x,z)\big)-V(x)-\mathfrak{D}_xV h(x,z) \Big) \nu(dz),\label{Eq:ItoGenerator}
\end{align}
where 
\begin{equation*}
    \mathfrak{D}_xV=\left[\frac{\partial V}{\partial x_1},\dots, \frac{\partial V}{\partial x_d} \right], \quad \mathfrak{D}^2_xV=\left[ \frac{\partial^2 V}{\partial x_i\partial x_j}\right]_{d\times d}.
\end{equation*}

\paragraph{Stochastic LaSalle invariance theorem}
Here, we briefly review the stochastic LaSalle invariance theorem of Kushner~\cite{kushner1972stochastic,kushner1968invariant}. Let the state space $\mathcal{Q}$ be a complete separable metric space, and $x(t)$ be a homogeneous strong Markov process on $\mathcal{Q}$ starting from $x_0$, whose distribution is continuous in $t$. For $V\in\mathcal{C}(\mathcal{Q},\mathbb{R}_{\geq 0})$ and $\alpha>0$, define $\mathcal{Q}_{\alpha}:=\{x\in\mathcal{Q}|\,V(x)<\alpha\}$ and assume $\mathcal{S}_{\alpha}$ is nonempty. Denote by $\tilde{\mathcal{A}}_{\alpha}$ the weak infinitesimal operator~\cite[Chapter 1.6]{dynkin1965markov} of the stopped process $x(t\wedge \tau_{\alpha})$, where $\tau_{\alpha}:=\inf\{t\geq0|\, x(t)\notin \mathcal{Q}_{\alpha}\}$. Suppose $V$ is in the domain of $\tilde{\mathcal{A}}_{\alpha}$.
\begin{theorem}
Let $\tilde{\mathcal{A}}_{\alpha} V\leq 0$ in $\mathcal{Q}_{\alpha}$. Then, $V(x(t))$ converges for almost all sample paths remaining in $\mathcal{Q}_{\alpha}$. Moreover, if $\mathcal{Q}_{\alpha}$ has compact closure, $x(t\wedge \tau_{\alpha})$ is Feller continuous and for any $\epsilon>0$, $\lim_{t\rightarrow t_0}\mathbb{P}(\|x(t\wedge \tau_{\alpha})-x_0\|\geq \epsilon)=0$ uniformly for $x_0\in\mathcal{Q}_{\alpha}$. Then, $x(t)$ converges in probability to the largest invariant set contained in $\{x\in\mathcal{S}_{\alpha}|\,\tilde{\mathcal{A}}_{\alpha}V(x)=0\}$ for almost all sample paths which never leave $\mathcal{Q}_{\alpha}$. 
\label{Thm:LaSalle}
\end{theorem}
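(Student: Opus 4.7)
The plan is to transport the proof of Proposition~\ref{Proposition:GES} from the SDE setting to the SME setting~\eqref{eq:sme_switch}, taking $V(\rho):=\mathrm{Tr}(X\rho)^{\delta}$ as in Lemma~\ref{Lemma:GES_structure} as the candidate Lyapunov-like function, and then computing the extra jump-compensator contribution $\Psi_{\delta}(X,D_{\mathbf{j}})$. First I would check that all preconditions of Proposition~\ref{Proposition:GES} have their SME analogs: the non-attainability of the target subspace $\mathcal{I}(\H_S)$ in finite time is exactly Lemma~\ref{Lemma:NeverReach_SME}; global asymptotic stability and finiteness of the number of switches follow by applying Proposition~\ref{Prop:GAS_SME}, whose hypotheses $\mathbf{\overline{H}1}$ and $\mathbf{\overline{H}2}$ are guaranteed, the first by Lemma~\ref{Lemma:GES_structure} (with $l$ chosen in its conclusion) and the second by assumption; and the bounded-set condition~\eqref{Eq:Cond_h} on $h_{\mathbf{j}}$ is automatic because $\rho$ lives in the compact set $\mathcal{S}(\H)$ and the quantum jump map $\mathcal{H}_{\mathbf{j}}$ is continuous on $\Sc(\H)\setminus \{\rho: v_{D_{\mathbf{j}}}(\rho)=0\}$, which by the invariance assumption of Lemma~\ref{Lemma:GES_structure} is bounded away from the trajectory for $\rho$ near $\mathcal{I}(\H_S)$.

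Next I would mimic the Itô expansion: since $V(\rho_t)>0$ almost surely for all $t\ge 0$, Itô's formula applied to $\log V(\rho_t)$ decomposes $\log V(\rho_t)-\log V(\rho_0)$ into a drift term $\int_0^t \sum_k u^k_s \mathcal{A}_k \log V(\rho_s)\d s$ plus a continuous martingale $M^1$ driven by $\d W^k$ and a jump martingale $M^2$ driven by $\tilde N^k$. I would then apply the exponential martingale inequality with jumps~\cite[Theorem 5.2.9]{applebaum2009levy} together with Borel--Cantelli to obtain, for every $\theta\in(0,1)$ and for almost all $\omega$, the pathwise upper bound
\begin{equation*}
M_t \le \tfrac{\theta}{2}\langle M^1, M^1\rangle_t + \theta \mathfrak{n} + \mathfrak{f}_{\theta}(t),\qquad t\le \mathfrak{n},
\end{equation*}
exactly as in the proof of Proposition~\ref{Proposition:GES}. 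Sending $\theta\to 0$ produces
\begin{equation*}
\limsup_{t\to\infty}\frac{1}{t}\log V(\rho_t)\le \limsup_{t\to\infty}\frac{1}{t}\Big[\int_0^t \sum_k u^k_s \tfrac{\mathcal{A}_k V(\rho_s)}{V(\rho_s)}\d s-\tfrac12 \langle M^1,M^1\rangle_t+\int_0^t\int \sum_k u^k_s\, \bar{\mathfrak{V}}_k(\rho_{s-},z)\d z\d s\Big]
\end{equation*}
almost surely, where $\bar{\mathfrak V}_k$ is the jump log--compensator.

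The localization step is identical to the SDE case: for any small $\mathsf r\in(0,l^*-\epsilon)$ and large $T$, Proposition~\ref{Prop:GAS_SME} implies the event $\Omega_T^{\mathsf r}:=\{\rho_t\in\Lambda_{\mathsf r}\ \forall t\ge T\}$ has probability tending to $1$ as $T\to\infty$, and on $\Omega_T^{\mathsf r}$ only the $\mathbf{j}$-subsystem is active for $t\ge T$. So on $\Omega_T^{\mathsf r}$ all integrands degenerate to the $\mathbf{j}$-subsystem and I can bound them via the estimates established inside Lemma~\ref{Lemma:GES_structure}: $\mathrm{Tr}(X\mathcal{F}_{\mathbf{j}}(\rho))/\mathrm{Tr}(X\rho)\le \bar{\mathfrak{l}}_{\mathbf{j}}(X_R)$, the quadratic-variation lower bound gives $\liminf |\mathrm{Tr}(X\mathcal{G}_{\mathbf{j}}(\rho))/\mathrm{Tr}(X\rho)|^2\ge \Gamma(X,C_{\mathbf{j}})^2$, and the standard $u\mapsto u^{\delta}-1-\delta\log u$ bound translated through the definition of $\Phi_{\delta}$ controls the $\rho\to \mathcal{I}(\H_S)$ limit of the jump-compensator. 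Combining them produces the advertised exponent up to the additional $\Psi_{\delta}$ term.

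The genuinely new work, and therefore the main obstacle, is identifying and bounding $\Psi_{\delta}(X,D_{\mathbf{j}})$, which arises because the jump integrator compensator in $\log V$ is \emph{not} the same as the drift term and survives the small-$\rho$ limit. I would handle it by using the indicator jump structure $\mathds{1}_{\{0\le z\le v_{D_{\mathbf{j}}}(\rho)\}}$ to collapse the $\d z$-integral to a multiplicative factor $v_{D_{\mathbf{j}}}(\rho)$, rewrite the ratio $\mathrm{Tr}(X\mathcal{J}_{\mathbf{j}}(\rho))/\mathrm{Tr}(X\rho)=\mathrm{Tr}(D^*_{\mathbf{j},R}X_R D_{\mathbf{j},R}\rho_R)/(\mathrm{Tr}(X_R\rho_R)\mathrm{Tr}(D_{\mathbf{j}}\rho D^*_{\mathbf{j}}))$ using the invariance structure (whence $D_{\mathbf{j},Q}=0$), and then sandwich this ratio between $\underline r(X,D_{\mathbf{j}})$ and $\bar r(X,D_{\mathbf{j}})$ by spectral bounds on $D^*_{\mathbf{j},R}X_R D_{\mathbf{j},R}$, $D^*_{\mathbf{j},S}D_{\mathbf{j},S}$ and $X_R$. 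Since $g_{\delta}(x):=\delta\log x+1-x^{\delta}\le 0$ with equality iff $x=1$, monotonicity of $g_{\delta}$ on either side of $1$ then lets me conclude $\Psi_{\delta}\le g_{\delta}(\bar r)\underline\lambda(D^*_{\mathbf{j},S}D_{\mathbf{j},S})<0$ if $\bar r<1$ or $\Psi_{\delta}\le g_{\delta}(\underline r)\underline\lambda(D^*_{\mathbf{j},S}D_{\mathbf{j},S})<0$ if $\underline r>1$. Assembling these pieces yields the claimed exponent $(2\delta\bar{\mathfrak{l}}_{\mathbf{j}}(X)-\delta\Gamma(X,C_{\mathbf{j}})^2+2(\Phi_{\delta}+\Psi_{\delta}))/(2\delta)$ and Lemma~\ref{Lemma:Relation_DisLya} transfers it from $V(\rho_t)^{1/\delta}$ to $\mathbf{d}_0(\rho_t)$.
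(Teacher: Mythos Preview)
Your proposal addresses the wrong statement. The theorem labeled \texttt{Thm:LaSalle} is Kushner's stochastic LaSalle invariance theorem, quoted in the appendix as a known result from \cite{kushner1972stochastic,kushner1968invariant}; the paper does not prove it at all but merely invokes it. What you have written is instead a proof sketch for Proposition~\ref{Proposition:GES_SME} (almost sure GES of the target subspace for the switched SME under $\sigma_2$), and indeed your outline tracks the paper's own proof of that proposition quite faithfully: verify the SME analogues of the hypotheses of Proposition~\ref{Proposition:GES} via Lemma~\ref{Lemma:NeverReach_SME}, Proposition~\ref{Prop:GAS_SME}, and compactness of $\Sc(\H)$; run the same It\^o/exponential-martingale/Borel--Cantelli argument on $\log V(\rho_t)$ with $V(\rho)=\Tr(X\rho)^{\delta}$; localize via $\Omega^{\mathsf r}_T$; and finally isolate the extra jump term $\Psi_{\delta}(X,D_{\mathbf{j}})$ and bound it using the block structure $D_{\mathbf{j},Q}=0$, spectral sandwiching, and the sign properties of $g_{\delta}$.

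So there are two points to make. First, for the statement actually asked about (Theorem~\ref{Thm:LaSalle}), no proof is expected here: it is a classical result and the paper simply cites it. Second, if your intention was to prove Proposition~\ref{Proposition:GES_SME}, then your approach is essentially identical to the paper's and is correct in outline; the only minor slip is that you should not need to worry about boundedness of $\mathcal{H}_{\mathbf{j}}$ ``away from $\{v_{D_{\mathbf{j}}}(\rho)=0\}$'' separately, since the relevant quantity in condition~\eqref{Eq:Cond_h} is $\mathds{1}_{\{0\le z\le v_{D_{\mathbf{j}}}(\rho)\}}\mathcal{H}_{\mathbf{j}}(\rho)$, which is globally bounded on $\Sc(\H)$ by compactness (the indicator kills the singularity).
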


\paragraph{Instrumental lemmas for the main results}
Here, we state instrumental results that is used in the proof of the GAS/GES of the target subspace almost surely for the switched SME~\eqref{eq:sme_switch}. In the following lemma, based on the generator~\eqref{Eq:ItoGenerator}, we provide the infinitesimal generator with respect to SME~\eqref{eq:sme0}.
\begin{lemma}
    For any twice continuously differentiable function $V: \mathcal{S}(\mathbb{H})\times \mathbb{R}_{+}  \rightarrow \mathbb{R}$, the corresponding infinitesimal generator with respect to SME~\eqref{eq:sme0} is given by
    \begin{align}
       \mathcal{A}V(\rho,t)= \frac{\partial V(\rho,t)}{\partial t}&+\Tr\big(\mathfrak{D}_{\rho}V \mathcal{F}(\rho)\big)+\frac{1}{2}\Tr\big(\mathfrak{D}^2_{\rho}V\,\mathcal{G}_{C}(\rho)^2\big)\nonumber\\
       &+\Big( V\big(\mathcal{J}_{D}(\rho)\big)-V(\rho)-\Tr\big(\mathfrak{D}_{\rho}V \mathcal{H}_{D}(\rho)\big)\Big)v_{D}(\rho),\label{Eq:GeneratorSME}
\end{align}
where $\mathfrak{D}_{\rho}V$ denotes the first differential in $\rho$ of $V$ and $\mathfrak{D}^2_{\rho}V$ the second differential, which can be expanded in terms of partial derivatives as in the generator of type~\eqref{Eq:ItoGenerator}.
\end{lemma}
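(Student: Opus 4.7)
The plan is to identify the SME~\eqref{eq:sme0} as a specific instance of the general jump-diffusion SDE~\eqref{Eq:SDE_X}, whose infinitesimal generator is given by~\eqref{Eq:ItoGenerator}, and then simplify the jump integral by exploiting the indicator structure of the jump coefficient.

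First, I would match coefficients: the drift is $f(\rho)=\mathcal{F}(\rho)$, the diffusion is $g(\rho)=\mathcal{G}_C(\rho)$, and the jump kernel is $h(\rho,x)=\mathds{1}_{\{0\le x\le v_D(\rho)\}}\mathcal{H}_D(\rho)$. As already argued immediately after~\eqref{eq:sme0}, since $v_D(\rho)=\Tr[D\rho D^*]$ is bounded on $\Sc(\H)$ by some $c>0$, the integration domain can be restricted to $|x|\le c$, so the equation fits the framework of~\eqref{Eq:ItoGenerator}. Pellegrini's well-posedness result~\cite{pellegrini2010markov} recalled in Section~\ref{ssec:prelimi_sme} guarantees a genuine $\Sc(\H)$-valued semimartingale, so the standard It\^o calculus applies to $V\in\mathcal{C}^2(\Sc(\H)\times\R_+,\R)$ and the generator identity~\eqref{Eq:ItoGenerator} can be invoked without further justification.

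Substituting these coefficients directly into~\eqref{Eq:ItoGenerator} produces $\partial_t V+\Tr(\mathfrak{D}_\rho V\,\mathcal{F}(\rho))$ for the time-derivative and drift parts, and $\tfrac{1}{2}\Tr(\mathcal{G}_C(\rho)^*\mathfrak{D}^2_\rho V\,\mathcal{G}_C(\rho))$ for the diffusion correction; since $\mathcal{G}_C(\rho)$ is self-adjoint on $\Bc_*(\H)$, this simplifies to $\tfrac{1}{2}\Tr(\mathfrak{D}^2_\rho V\,\mathcal{G}_C(\rho)^2)$ as stated. For the jump integral, the key observation is that for $x\notin[0,v_D(\rho)]$ the indicator vanishes and the entire integrand is zero, while for $x\in[0,v_D(\rho)]$ the integrand reduces to the $x$-independent quantity
\begin{equation*}
V\bigl(\rho+\mathcal{H}_D(\rho)\bigr)-V(\rho)-\Tr\bigl(\mathfrak{D}_\rho V\,\mathcal{H}_D(\rho)\bigr).
\end{equation*}
Integrating over an interval of Lebesgue measure $v_D(\rho)$ simply multiplies this expression by $v_D(\rho)$.

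Finally, applying the elementary identity $\rho+\mathcal{H}_D(\rho)=\mathcal{J}_D(\rho)$, which is immediate from the definition $\mathcal{H}_D(A)=\mathcal{J}_D(A)-A$ in Section~\ref{ssec:prelimi_sme}, I would recast the jump contribution into the claimed form $\bigl(V(\mathcal{J}_D(\rho))-V(\rho)-\Tr(\mathfrak{D}_\rho V\,\mathcal{H}_D(\rho))\bigr)v_D(\rho)$. There is no genuine obstacle here: the only delicate point is that $h(\rho,x)$ fails to be Lipschitz in $\rho$ because of the indicator, but this irregularity is already absorbed into Pellegrini's existence and uniqueness theory~\cite{pellegrini2010markov}, so nothing beyond the stated $\mathcal{C}^2$ hypothesis on $V$ is required.
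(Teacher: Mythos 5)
Your proposal is correct and follows essentially the same route as the paper's own proof: substitute $f=\mathcal{F}$, $g=\mathcal{G}_C$, $h(\rho,x)=\mathds{1}_{\{0\le x\le v_D(\rho)\}}\mathcal{H}_D(\rho)$ into the general generator~\eqref{Eq:ItoGenerator}, note that the jump integrand is $x$-independent on $[0,v_D(\rho)]$ and vanishes elsewhere, so the Lebesgue integral contributes the factor $v_D(\rho)$, and use $\rho+\mathcal{H}_D(\rho)=\mathcal{J}_D(\rho)$. The additional remarks on boundedness of $v_D$ on $\Sc(\H)$ and on Pellegrini's well-posedness are consistent with the paper's setup and do not change the argument.
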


\proof
Based on the generator~\eqref{Eq:ItoGenerator}, for the function $V$, we have
\begin{align*}
       \mathcal{A}V(\rho,t)= \frac{\partial V(\rho,t)}{\partial t}&+\Tr\big(\mathfrak{D}_{\rho}V \mathcal{F}(\rho)\big)+\frac{1}{2}\Tr\big(\mathfrak{D}^2_{\rho}V\,\mathcal{G}_{C}(\rho)^2\big)\\
       &+\int_{|z|<c}V\big(\rho+\mathcal{Q}_{D}(\rho,z)\big)-V(\rho)-\Tr\Big(\mathfrak{D}_{\rho}V \mathcal{Q}_{D}(\rho,z) \Big) dz.
\end{align*}
where $\mathcal{Q}_{D}(\rho,z)=\mathds{1}_{\{0<z<v_{D}(\rho)\}}\mathcal{H}_{D}(\rho,z)$.
It is straightforward to compute that 
\begin{align*}
   & V\big(\rho+\mathcal{Q}_{D}(\rho,z)\big)-V(\rho)-\Tr\Big(\mathfrak{D}_{\rho}V \mathcal{Q}_{D}(\rho,z) \Big) \\
    &= \Big[V\big(\mathcal{J}_{D}(\rho)\big)-V(\rho)-\Tr\Big(\mathfrak{D}_{\rho}V \mathcal{H}_{D}(\rho,z)\big)\Big)\Big] \mathds{1}_{\{0<z<v_{D}(\rho)\}},
\end{align*}
and 
\begin{equation*}
    \int_{|z|<c} \mathds{1}_{\{0<z<v_{D}(\rho)\}} dz = v_{D}(\rho).
\end{equation*}
Then, we can obtain the generator~\eqref{Eq:GeneratorSME}.
\hfill$\square$

The following lemma is analogous to Lemma~\ref{Lemma:NeverReach} for the classical Jump-Diffusion SDE.
\begin{lemma}
Suppose that there exists $\mathbf{j}\in[m]$ such that $\mathbb{H}_S$ is invariant for $\mathcal{F}_{\mathbf{j}}$. In addition, assume $D^*_{\mathbf{j},R}D_{\mathbf{j},R}>0$ when $D_{\mathbf{j}}\neq 0$.
For all $\rho_0\in\mathcal{I}(\mathbb{H}_S)$, 
$
\mathbb{P}\big( \rho(t)\in \mathcal{I}(\mathbb{H}_S),\,\forall t\geq0\big)=1,
$
and for all 
$\rho_0\in\mathcal{S}(\mathbb{H})\setminus\mathcal{I}(\mathbb{H}_S)$, 
$
\mathbb{P}\big( \rho(t)\in \mathcal{S}(\mathbb{H})\setminus\mathcal{I}(\mathbb{H}_S),\,\forall t\geq0\big)=1,
$
where $\rho(t)$ is a solution of the controlled system~\eqref{eq:sme_switch} under switching law $\sigma_2$ starting from $\rho_0\in\mathcal{S}(\mathbb{H})$.
\label{Lemma:NeverReach_SME}
\end{lemma}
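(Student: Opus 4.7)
My plan is to split the statement into the forward-invariance assertion and the non-attainability assertion and treat them separately. For the forward invariance, observe that $\mathcal{I}(\mathbb{H}_S) \subset \Lambda_{l^*-\epsilon}$ since $\mathbf{d}_0(\rho)=0$ whenever $\rho\in\mathcal{I}(\mathbb{H}_S)$; by construction the switching law $\sigma_2$ then activates the $\mathbf{j}$-th subsystem on this neighbourhood. Combining the assumed invariance of $\mathbb{H}_S$ under $\mathcal{F}_\mathbf{j}$, which via Theorem~\ref{Thm:Invariance} forces the block-triangular structure $L_{\mathbf{j},Q}=C_{\mathbf{j},Q}=D_{\mathbf{j},Q}=0$, with the strong uniqueness in Theorem~\ref{thm:wellposed}, one gets $\rho_t\in \mathcal{I}(\mathbb{H}_S)$ for every $t\ge 0$ almost surely, so that the $\mathbf{j}$-th subsystem keeps being active forever.

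For the non-attainability assertion, the same inclusion is decisive: any trajectory that approaches $\mathcal{I}(\mathbb{H}_S)$ must first enter $\Lambda_{l^*-\epsilon}$, where only the $\mathbf{j}$-th subsystem runs under $\sigma_2$. On every interval where a subsystem $k\neq\mathbf{j}$ is active one has $\mathbf{d}_0(\rho_t)\ge l^*-\epsilon>0$, so $\rho_t$ is uniformly away from the target and nothing is to prove there. It therefore suffices to show that, under the pure $\mathbf{j}$-th subsystem dynamics, a trajectory starting from any $\rho\notin\mathcal{I}(\mathbb{H}_S)$ cannot reach $\mathcal{I}(\mathbb{H}_S)$ in finite time, and then to paste these estimates across the countable sequence of switching instants introduced in the proof of Theorem~\ref{thm:wellposed}.

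Concretely, pick any strictly positive $X_R\in\mathcal{B}_+(\mathbb{H}_R)$ (for instance $X_R=I_R$), let $X$ be its extension in $\mathcal{B}(\mathbb{H})$, and set $Y_t := \Tr(X\rho_t) = \Tr(X_R\rho_{t,R})$, which vanishes exactly on $\mathcal{I}(\mathbb{H}_S)$. I would then apply It\^o's formula to $V(\rho):=-\log\Tr(X\rho)$, which blows up as $\rho\to\mathcal{I}(\mathbb{H}_S)$. Using the block-triangular structure from Theorem~\ref{Thm:Invariance}, each term of the generator~\eqref{Eq:GeneratorSME} is controlled linearly in $Y_t$: the drift satisfies $\Tr(X\mathcal{F}_\mathbf{j}(\rho)) = \Tr(\mathcal{F}^*_{\mathbf{j},R}(X_R)\rho_R)\le \bar{\mathfrak{l}}_{\mathbf{j}}(X_R)Y_t$, the diffusion satisfies $|\Tr(X\mathcal{G}_{C_\mathbf{j}}(\rho))|\le c_1 Y_t$, and the jump ratio $\Tr(X\mathcal{J}_{D_\mathbf{j}}(\rho))/Y_t = \Tr(D^*_{\mathbf{j},R}X_R D_{\mathbf{j},R}\rho_R)/(v_{D_\mathbf{j}}(\rho)\,Y_t)$ is sandwiched between two positive constants, precisely because $D^*_{\mathbf{j},R}D_{\mathbf{j},R}>0$ makes $D^*_{\mathbf{j},R}X_R D_{\mathbf{j},R}$ positive definite on $\mathbb{H}_R$; in particular the jump contribution to $V$ stays finite and uniformly bounded. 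Altogether $\mathcal{A}_\mathbf{j} V(\rho)\le A + B\,V(\rho)$ with $A,B>0$ constants.

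The proof would be completed by the standard localization argument of \cite[Lemma 3.2]{applebaum2009asymptotic} and \cite[Lemma 2.6]{chao2017almost}: setting $\tau_n := \inf\{t\ge 0 : Y_t\le 1/n\}$, applying It\^o's formula to $V(\rho_{t\wedge\tau_n})$, taking expectations so that the martingale parts vanish, and invoking Gronwall's inequality gives $\mathbb{E}[V(\rho_{t\wedge\tau_n})]\le (V(\rho_0)+At)e^{Bt}$. Since $V(\rho_{\tau_n})=\log n$ on $\{\tau_n\le t\}$, Markov's inequality forces $\mathbb{P}(\tau_n\le t)\to 0$ as $n\to\infty$, hence $Y_t>0$ for every $t\ge 0$ almost surely on any $\mathbf{j}$-activation interval, and pasting across the switching times yields the global claim. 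The main technical obstacle is exactly the linear-growth bound on $\mathcal{A}_\mathbf{j} V$, and this is where both structural hypotheses of the lemma enter: the invariance of $\mathbb{H}_S$ reduces the generator to an $R$-block expression that is linear in $Y_t$, and $D^*_{\mathbf{j},R}D_{\mathbf{j},R}>0$ prevents the jump $\rho\mapsto \mathcal{J}_{D_\mathbf{j}}(\rho)$ from instantly sending $\rho$ into $\mathcal{I}(\mathbb{H}_S)$, keeping the relevant log-ratios uniformly controlled.
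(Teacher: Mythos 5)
Your treatment of the pure $\mathbf{j}$-dynamics is sound and is essentially the same scheme the paper uses: a Lyapunov function blowing up on $\mathcal{I}(\mathbb{H}_S)$, a generator bound exploiting the block structure forced by Theorem~\ref{Thm:Invariance} together with $D^*_{\mathbf{j},R}D_{\mathbf{j},R}>0$, and a localization/stopping argument in the style of~\cite[Lemma 3.2]{applebaum2009asymptotic}. The only cosmetic difference is the choice $V=-\log\Tr(X\rho)$ with the affine bound $\mathcal{A}_{\mathbf{j}}V\le A+BV$ and Gronwall, whereas the paper works with $V(\rho)=1/\Tr(\Pi_0^{\perp}\rho)$ and the bound $\mathcal{A}V\le\gamma V$ followed by an It\^o estimate on $e^{-\gamma t}V$; these are interchangeable. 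The invariance part (first claim) is also in line with the paper's argument.

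The genuine gap is in how you dispose of the episodes where a subsystem $k\neq\mathbf{j}$ is active. You argue that on such intervals $\mathbf{d}_0(\rho_t)\ge l^*-\epsilon$, so ``nothing is to prove there,'' and you only paste the $\mathbf{j}$-estimates across switching times. But under the convention used in the well-posedness construction, at a switching instant $\tau_{n+1}$ the \emph{old} subsystem $p_n=k$ is still the one driving the dynamics, and the exit from $\Theta^{l^*-\epsilon}_{k}$ may be caused precisely by a jump of the $k$-th counting process: the post-jump state is $\mathcal{J}_{D_k}(\rho_{\tau_{n+1}-})$, and nothing in your interval estimate prevents this state from landing \emph{inside} $\mathcal{I}(\mathbb{H}_S)$, i.e., the target could be attained in finite time at a switching instant, which your pasting over $\mathbf{j}$-activation intervals cannot exclude. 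The statement ``any trajectory that approaches $\mathcal{I}(\mathbb{H}_S)$ must first enter $\Lambda_{l^*-\epsilon}$ under the $\mathbf{j}$-dynamics'' is exactly what fails for jump trajectories without an extra argument. To close this you must either invoke the jump-direction condition imposed for $\sigma_2$ (Assumption~\ref{ass:partition3} in its SME form: $\mathcal{J}_{D_k}(\rho)\in\Theta^{l^*-\epsilon}_k$ whenever $\rho\in\Theta^{l^*-\epsilon}_k$, $k\neq\mathbf{j}$), so that such jumps remain uniformly away from the target, or do what the paper does: establish the bound $\mathcal{A}_kV(\rho)\le\gamma V(\rho)$ for \emph{every} $k\in[m]$ on $\mathcal{S}(\mathbb{H})\setminus\Lambda_{l^*-\epsilon}$ (using compactness) and run the stopped-process estimate for the full switched trajectory $\sum_k u^k_s\mathcal{A}_kV$, so that the jump contributions of the non-$\mathbf{j}$ subsystems are controlled inside the expectation rather than ignored.
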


\proof
It is obvious that $\mathbb{H}_S$ is invariant in mean for $\mathbf{j}$-th subsystem if $\mathbf{\overline{H}1}$ is satisfied. Thus, $\mathbb{H}_S$ is invariant almost surely for $\mathbf{j}$-th subsystem by employing the similar arguments as in~\cite[Theorem 1.1]{benoist2017exponential} and~\cite[Lemma 3.2]{applebaum2009asymptotic}.

Let us now prove the second part of the lemma. Let
\begin{equation*}
\rho_{R,red}=
\begin{cases}
\frac{\rho_R}{\mathrm{Tr}(\rho_R)},& \text{if }\mathrm{Tr}(\rho_R)\neq 0;\\
\mu_R,& \text{if }\mathrm{Tr}(\rho_R)= 0,
\end{cases}
\end{equation*}
where $\mu_R\in\mathcal{S}(\mathbb{H}_R)$ is arbitrary and $\rho_{R,red}\in\mathcal{S}(\mathbb{H}_R)$. Define $V(\rho)=\frac{1}{\mathrm{Tr}(\Pi_{0}^{\perp}\rho)}$ where $\Pi_{0}^{\perp}=\mathbf{I}-\Pi_0$. 
The infinitesimal generator of $V(\rho)$ with respect to $\mathbf{j}$-th subsystem is given by
\begin{align*}
  \mathcal{A}_{\mathbf{j}}V(\rho) = V(\rho) \Big( -\mathrm{Tr}\big(\mathcal{F}_{\mathbf{j},R}(\rho_{R,red})\big)+\mathcal{T}_{\mathbf{j}}(\rho)^2+\Tr(D_{\mathbf{j}}\rho D_{\mathbf{j}}^*)^2/\mathcal{R}_{\mathbf{j}}(\rho)+\mathcal{R}_{\mathbf{j}}(\rho) \Big)
\end{align*}
$\mathcal{T}_{\mathbf{J}}(\rho):=\mathrm{Tr}\big((C_{\mathbf{J},R}+C_{\mathbf{J},R}^*)\rho_{R,red}\big)- \mathrm{Tr}\big((C_{\mathbf{J}}+C_{\mathbf{J}}^*)\rho \big)$ and $\mathcal{R}_{\mathbf{j}}(\rho):=\Tr(D_{\mathbf{j},R}^*D_{\mathbf{j},R}\rho_R)/\Tr(\rho_R)$.

Due to the compactness of $\mathcal{S}(\mathbb{H})$ and $\mathcal{S}(\mathbb{H}_R)$, for any $\zeta\in(0,l^*-\epsilon)$, where $l^*$ and $\epsilon$ are defined in $\mathbf{\overline{H}2}$, and $\rho\in\Lambda_{l^*-\epsilon}\setminus\Lambda_{\zeta}$, there exists a finite constant $\alpha\geq 0$ such that
$
 -\mathrm{Tr}\big(\mathcal{F}_{\mathbf{j},R}(\rho_{R,red})\big)+\mathcal{T}_{\mathbf{j}}(\rho)^2 \leq \alpha.
$
Moreover, $\Tr(D_{\mathbf{j}}\rho D_{\mathbf{j}}^*)\leq \|D_{\mathbf{j}}^*D_{\mathbf{j}}\|_{HS}$ due to Cauchy-Schwarz inequality, and 
$$
0<\underline{\lambda}(D_{\mathbf{j},R}^*D_{\mathbf{j},R})\leq \mathcal{R}_{\mathbf{j}}(\rho)\leq \bar{\lambda}(D_{\mathbf{j},R}^*D_{\mathbf{j},R})
$$
since $D_{\mathbf{j},R}^*D_{\mathbf{j},R}>0$. Thus, $\Tr(D_{\mathbf{j}}\rho D_{\mathbf{j}}^*)^2/\mathcal{R}_{\mathbf{j}}(\rho)+\mathcal{R}_{\mathbf{j}}(\rho)\leq \beta$ for some finite constant $\beta\geq0$.

Therefore, for all $\rho\in \Lambda_{l^*-\epsilon}\setminus\Lambda_{\zeta}$, there exists a constant $\gamma>0$ such that
$
    \mathcal{A}_{\mathbf{j}}V(\rho)\leq \gamma V(\rho).
$
Furthermore, due to the compactness of $\mathcal{S}(\mathbb{H})$, it is straightforward to show that, for all $k\in[m]$ and $\rho\in\mathcal{S}(\mathbb{H})\setminus \Lambda_{l^*-\epsilon}$,  $\mathcal{A}_{k}V(\rho)\leq \gamma V(\rho)$ for some constant $\gamma>0$.

Define $\tau_{\zeta}:=\inf\{t\geq 0| \rho(t)\in \Lambda_{\zeta}\}$. By applying It\^o formula to $e^{\gamma t}V(\rho(t))$, we have
\begin{equation*}
\mathbb{E}\big( e^{-\gamma (\tau_{\zeta}\wedge t)}V(\rho(\tau_{\zeta}\wedge t))\big)-V(\rho_0)= \mathbb{E} \int^{\tau_{\zeta}\wedge t}_0 \sum^{m}_{k=1}u^{k}_s e^{-\gamma s}\big(\mathcal{A}_k V(\rho(s-))-\gamma V(\rho(s-))\big)ds \leq  0.
\end{equation*}
Conditioning to the event $\{\tau_{\zeta}\leq t\}$, and $V(\rho(\tau_{\zeta}\wedge t))=V(\rho(\tau_{\zeta}))=1/\zeta$, which implies
\begin{equation*}
\frac{1}{\zeta e^{\gamma t}}\mathbb{P}(\tau_{\zeta}\leq t)\leq \mathbb{E} \big(e^{-\gamma (\tau_{\zeta}\wedge t)} V(\rho(\tau_{\zeta}\wedge t))\mathds{1}_{\{\tau_{\zeta}\leq t\}}\big)\leq V(\rho_0).
\end{equation*} 
Thus, $\mathbb{P}(\tau_{\zeta}\leq t) \leq V(\rho_0)\zeta e^{\gamma t}$. Let $\zeta$ converge to zero, we have $\mathbb{P}(\tau_{\zeta}\leq t)$ converge to zero. The proof is complete.
\hfill$\square$

\smallskip

The following result is instrumental to prove the results concerning on the almost sure GES of the target subspace $\mathbb{H}_S$. Before stating the result, we define
\begin{equation}\label{Eq:Gamma}
\Gamma(X,C):=
\begin{cases}
\underline{\lambda}(C_S+C_S^*)-\max\left\{0, \frac{\bar{\lambda}(Z_R)}{\underline{\lambda}(X_R)}\right\},&\text{if }\max\left\{0, \frac{\bar{\lambda}(Z_R)}{\underline{\lambda}(X_R)}\right\}<\underline{\lambda}(C_S+C_S^*)\\
\min\left\{0, \frac{\underline{\lambda}(Z_R)}{\underline{\lambda}(X_R)}\right\}-\bar{\lambda}(C_S+C_S^*),&\text{if }\min\left\{0, \frac{\underline{\lambda}(Z_R)}{\underline{\lambda}(X_R)}\right\}>\bar{\lambda}(C_S+C_S^*)\\
0,&\text{else},
\end{cases}
\end{equation}
and 
\begin{equation*}
\Gamma(C):=
\begin{cases}
\underline{\lambda}(C_S+C_S^*),&\text{if }(C_S+C_S^*)>0\\
\bar{\lambda}(C_S+C_S^*),&\text{if }(C_S+C_S^*)<0\\
0,&\text{else},
\end{cases}
\end{equation*}
where $Z_R=X_RC_R+C_R^*X_R$ is Hermitian, and define
\begin{equation}\label{Eq:Theta}
    \Phi(X,D):=\left(\frac{\bar{\lambda}(D^*_R X_R D_R)}{\underline{\lambda}(X_R)}\right)^\delta\bar{\lambda}(D^*_S D_S)^{1-\delta}-(1-\delta)\underline{\lambda}(D^*_S D_S)-\delta\frac{\underline{\lambda}(D^*_R X_R D_R)}{\bar{\lambda}(X_R)}.
\end{equation}

\begin{lemma}
For the noise operators $C\in\mathcal{B}(\mathbb{H})$ such that $C_Q=0$ and $X_R\in\mathcal{B}_{+}(\mathbb{H}_R)$, if one of the following is satisfied
\begin{enumerate}
\item[\emph{(i)}] $\max\{0, \bar{\lambda}(Z_R)/\underline{\lambda}(X_R)\}<\underline{\lambda}(C_S+C_S^*)$;
\item[\emph{(ii)}] $\min\{0, \underline{\lambda}(Z_R)/\underline{\lambda}(X_R)\}>\bar{\lambda}(C_S+C_S^*)$,
\end{enumerate} 
then 
$
\liminf_{\rho\rightarrow\mathcal{I}(\mathbb{H}_S)}|\Tr(X\mathcal{G}_C(\rho))/\Tr(X\rho)|^2\geq \Gamma(X,C)^2>0,
$ 
where $X$ is the extension in $\mathcal{B}(\mathcal{H})$ of $X_R$. In particular, for the case $X=\Pi_{0}^{\perp}$ where $\Pi_{0}^{\perp}$ is the projection on $\mathbb{H}_R$, if $(C_S+C_S^*)>0$ or $(C_S+C_S^*)<0$, then 
$
\liminf_{\rho\rightarrow\mathcal{I}(\mathbb{H}_S)}|\Tr(\Pi_{0}^{\perp}\mathcal{G}_C(\rho))/\Tr(\Pi_{0}^{\perp}\rho)|^2\geq \Gamma(C)^2>0.
$ 
\label{Lemma:PositiveLim_G}
\end{lemma}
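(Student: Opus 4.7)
The plan is to derive an explicit expression for $\Tr(X\mathcal{G}_C(\rho))/\Tr(X\rho)$ in block form with respect to $\mathbb{H}=\mathbb{H}_S\oplus\mathbb{H}_R$, and then extract the spectral bounds that define $\Gamma(X,C)$ through a limsup/liminf analysis as $\rho\to\mathcal{I}(\mathbb{H}_S)$.

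First I would exploit the block structures $X=\mathrm{diag}(0,X_R)$ and $C=\bigl(\begin{smallmatrix}C_S&C_P\\ 0&C_R\end{smallmatrix}\bigr)$ (using $C_Q=0$) to compute $XC=\mathrm{diag}(0,X_R C_R)$ and $C^*X=\mathrm{diag}(0,C_R^*X_R)$, so that $XC+C^*X$ has only the $RR$-block $Z_R=X_R C_R+C_R^*X_R$. Combining with $\Tr(X\rho)=\Tr(X_R\rho_R)$ and the definition of $\mathcal{G}_C$ gives
\begin{equation*}
\Tr\bigl(X\mathcal{G}_C(\rho)\bigr)=\Tr(Z_R\rho_R)-\Tr[(C+C^*)\rho]\,\Tr(X_R\rho_R),
\end{equation*}
hence
\begin{equation*}
\frac{\Tr(X\mathcal{G}_C(\rho))}{\Tr(X\rho)}=\frac{\Tr(Z_R\rho_R)}{\Tr(X_R\rho_R)}-\Tr[(C+C^*)\rho].
\end{equation*}

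Next I would analyze the two summands as $\rho\to\mathcal{I}(\mathbb{H}_S)$, where $\rho_P,\rho_Q,\rho_R\to 0$ in norm and $\Tr(\rho_S)\to 1$. A block expansion of $\Tr[(C+C^*)\rho]$ shows that only $\Tr[(C_S+C_S^*)\rho_S]$ survives in the limit, hence lies in $[\underline{\lambda}(C_S+C_S^*),\bar{\lambda}(C_S+C_S^*)]$ up to $o(1)$. For the first summand, rewriting via $\tilde\rho_R:=\rho_R/\Tr(\rho_R)$ (a density on $\mathbb{H}_R$) turns it into $\Tr(Z_R\tilde\rho_R)/\Tr(X_R\tilde\rho_R)$, with denominator bounded below by $\underline{\lambda}(X_R)>0$ since $X_R\in\mathcal{B}_+(\mathbb{H}_R)$ is invertible. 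A sign case analysis of $\Tr(Z_R\tilde\rho_R)$ then yields the uniform bounds
\begin{equation*}
\min\Bigl\{0,\tfrac{\underline{\lambda}(Z_R)}{\underline{\lambda}(X_R)}\Bigr\}\;\le\;\frac{\Tr(Z_R\rho_R)}{\Tr(X_R\rho_R)}\;\le\;\max\Bigl\{0,\tfrac{\bar{\lambda}(Z_R)}{\underline{\lambda}(X_R)}\Bigr\}.
\end{equation*}

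Combining these estimates handles both regimes. Under hypothesis~(i), the limsup of the ratio is bounded above by $\max\{0,\bar{\lambda}(Z_R)/\underline{\lambda}(X_R)\}-\underline{\lambda}(C_S+C_S^*)=-\Gamma(X,C)<0$, so in absolute value it is eventually at least $\Gamma(X,C)-o(1)$ and therefore $\liminf|\cdot|^2\ge\Gamma(X,C)^2$. Under (ii) the symmetric argument yields $\liminf$ of the ratio at least $\min\{0,\underline{\lambda}(Z_R)/\underline{\lambda}(X_R)\}-\bar{\lambda}(C_S+C_S^*)=\Gamma(X,C)>0$, and squaring gives the same conclusion. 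The special case $X=\Pi_0^\perp$ follows by specializing $X_R=I_R$, so $\underline{\lambda}(X_R)=1$ and $Z_R=C_R+C_R^*$; together with sign-definiteness of $C_S+C_S^*$ this collapses the bound to $\Gamma(C)^2$. The main technical obstacle will be the block-algebraic identity in the first step, where one must be careful not to confuse $XC+C^*X$ with $CX+XC^*$ since $\Tr(XC\rho)\ne\Tr(CX\rho)$ in general, together with the sign case analysis for $\Tr(Z_R\rho_R)/\Tr(X_R\rho_R)$ when $Z_R$ is indefinite; the remaining estimates are uniform in the off-diagonal blocks $\rho_P,\rho_Q$, making the interchange of limsup/liminf with the spectral bounds routine.
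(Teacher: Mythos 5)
Your treatment of the general case is correct and is essentially the paper's own argument: you derive the same identity $\Tr(X\mathcal{G}_C(\rho))=\Tr(Z_R\rho_R)-\Tr((C+C^*)\rho)\,\Tr(X\rho)$ from $C_Q=0$, bound the normalized term $\Tr(Z_R\rho_R)/\Tr(X_R\rho_R)$ between $\min\{0,\underline{\lambda}(Z_R)/\underline{\lambda}(X_R)\}$ and $\max\{0,\bar{\lambda}(Z_R)/\underline{\lambda}(X_R)\}$ (the paper phrases the same estimate via the operator inequalities $Z_R\le(\bar{\lambda}(Z_R)/\underline{\lambda}(X_R))X_R$, resp.\ its lower analogue), and use that $\Tr((C+C^*)\rho)\to\Tr((C_S+C_S^*)\rho_S)$ with $\Tr(\rho_S)\to 1$ as $\rho\to\mathcal{I}(\mathbb{H}_S)$; conditions (i)/(ii) then give the sign-definite gap $\Gamma(X,C)$ exactly as in the paper.

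The gap is in your one-line treatment of the special case $X=\Pi_0^{\perp}$. Specializing the general estimate to $X_R=I_R$ does not yield the stated claim: with $Z_R=C_R+C_R^*$, hypothesis (i) becomes $\max\{0,\bar{\lambda}(C_R+C_R^*)\}<\underline{\lambda}(C_S+C_S^*)$, which is strictly stronger than mere sign-definiteness of $C_S+C_S^*$ (the special case imposes no condition on $C_R$ at all), and even under that stronger hypothesis the specialization only delivers the smaller constant $\bigl(\underline{\lambda}(C_S+C_S^*)-\max\{0,\bar{\lambda}(C_R+C_R^*)\}\bigr)^2$ rather than $\Gamma(C)^2=\underline{\lambda}(C_S+C_S^*)^2$. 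The obstruction is that the term you bound, $\Tr((C_R+C_R^*)\rho_R)/\Tr(\rho_R)$, does not vanish as $\rho\to\mathcal{I}(\mathbb{H}_S)$: although $\rho_R\to 0$, the normalized ratio remains of order one, confined only to $[\underline{\lambda}(C_R+C_R^*),\bar{\lambda}(C_R+C_R^*)]$, and it can offset $\Tr((C_S+C_S^*)\rho_S)$, so nothing ``collapses''. The paper does not obtain the special case by specialization; it performs a separate block computation tailored to $\Pi_0^{\perp}$, in which the $RR$-contribution of $\Tr((C+C^*)\rho)$ is cancelled against $\Tr(\Pi_0^{\perp}(C\rho+\rho C^*))$, leaving only $\Tr((C_S+C_S^*)\rho_S)+\Tr(C_P\rho_P^*+\rho_P C_P^*)$, whose $C_P$ part vanishes in the limit. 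You would need to supply (and carefully verify) such a dedicated computation for $X=\Pi_0^{\perp}$ — note that the claimed exact cancellation of the $C_R$-dependence does not follow from your displayed identity evaluated at $X_R=I_R$, so this is precisely the step that cannot be waved through.
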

\proof
By a straightforward calculation under the assumption $C_Q=0$, we have
\begin{equation*}
\Tr(X\mathcal{G}_C(\rho))=\Tr(Z_R\rho_R)-\Tr((C+C^*)\rho)\Tr(X\rho).
\end{equation*}
First, we investigate the condition (i). If $Z_R\leq 0$, then $\Tr(X\mathcal{G}_C(\rho))\leq -\Tr((C+C^*)\rho)\Tr(X\rho)$. If $\bar{\lambda}(Z_R)>0$ and $Z_R\leq \big(\bar{\lambda}(Z_R)/\underline{\lambda}(X_R)\big) X_R$, then
$$
\Tr(X\mathcal{G}_C(\rho))\leq \Big(\frac{\bar{\lambda}(Z_R)}{\underline{\lambda}(X_R)}-\Tr((C+C^*)\rho)\Big)\Tr(X\rho).
$$
Based on the block-decomposition, we have
\begin{equation*}
\Tr((C+C^*)\rho)=\Tr((C_S+C^*_S)\rho_S)+\Tr((C_R+C^*_R)\rho_R)+\Tr(C_P\rho_P^*+\rho_PC^*_P).
\end{equation*}
Due to $\rho_S\geq 0$, $\Tr((C_S+C^*_S)\rho_S)\geq \underline{\lambda}(C_S+C^*_S)\Tr(\rho_S)\geq 0$ when $(C_S+C^*_S)>0$. Moreover, since $\Tr(\rho_S)$ converges to one, $\rho_P$ and $\rho_R$ converge to zero when $\rho$ tends to $\mathcal{I}(\mathbb{H}_S)$, the condition (i) guarantees that, 
\begin{equation*}
\begin{split}
&\lim_{\rho\rightarrow\mathcal{I}(\mathbb{H}_S)}\Tr((C+C^*)\rho)-\max\left\{0, \frac{\bar{\lambda}(Z_R)}{\underline{\lambda}(X_R)}\right\}\\
=&\lim_{\rho\rightarrow\mathcal{I}(\mathbb{H}_S)}\Tr((C_S+C^*_S)\rho_S)-\max\left\{0, \frac{\bar{\lambda}(Z_R)}{\underline{\lambda}(X_R)}\right\}\geq \underline{\lambda}(C_S+C_S^*)-\max\left\{0, \frac{\bar{\lambda}(Z_R)}{\underline{\lambda}(X_R)}\right\}>0.
\end{split}
\end{equation*}
Thus,
$
\lim_{\rho\rightarrow\mathcal{I}(\mathbb{H}_S)}|\Tr(X\mathcal{G}_C(\rho))/\Tr(X\rho)|^2\geq \Gamma(X,C)^2>0,
$ 
if the condition (i) holds.

Next, we investigate the condition (ii). If $Z_R\geq 0$, then $\Tr(X\mathcal{G}_C(\rho))\geq -\Tr((C+C^*)\rho)\Tr(X\rho)$. If $\underline{\lambda}(Z_R)<0$ and $Z_R\geq \big(\underline{\lambda}(Z_R)/\underline{\lambda}(X_R)\big) X_R$, then
$$
\Tr(X\mathcal{G}_C(\rho))\geq \Big(\frac{\underline{\lambda}(Z_R)}{\underline{\lambda}(X_R)}-\Tr((C+C^*)\rho)\Big)\Tr(X\rho).
$$
By similar arguments, the desired result can be concluded under the condition (ii). 

Now, we suppose $X=\Pi_{0}^{\perp}$. Based on the block-decomposition, one deduces that 
\begin{equation*}
\begin{split}
\Tr(\Pi_{0}^{\perp}\mathcal{G}_C(\rho))&= \big(\Tr((C_R+C^*_R)\rho_R)-\Tr((C+C^*)\rho)\big)\Tr(\Pi_{0}^{\perp}\rho)\\
&=\big(\Tr((C_S+C^*_S)\rho_S)+\Tr(C_P\rho_P^*+\rho_PC^*_P)\big)\Tr(\Pi_{0}^{\perp}\rho).
\end{split}
\end{equation*}
Since $\rho_S\geq 0$ and $\Tr(\rho_S)$ converges to one, $\rho_P$ converge to zero when $\rho$ tends to $\mathcal{I}(\mathbb{H}_S)$,
if $(C_S+C_S^*)>0$ or $(C_S+C_S^*)<0$, then
$
\lim_{\rho\rightarrow\mathcal{I}(\mathbb{H}_S)}|\Tr((C_S+C^*_S)\rho_S)|\geq |\Gamma(C)|>0.
$ 
Then the proof is complete.
\hfill$\square$

\begin{lemma}
Consider the noise operators $D\in\mathcal{B}(\mathbb{H})$ such that $D_Q=0$ and $X_R\in\mathcal{B}_{+}(\mathbb{H}_R)$. For any $\delta\in(0,1)$,
then 
$$
\lim_{\rho\rightarrow\mathcal{I}(\mathbb{H}_S)}\left[\left(\frac{\Tr(X \mathcal{J}_{D}(\rho))}{\Tr(X\rho)}\right)^{\delta}-(1-\delta)-\delta\frac{\Tr(X \mathcal{J}_{D}(\rho))}{\Tr(X\rho)}\right]v_D(\rho)\leq \Phi_{\delta}(X,D),
$$
where $X$ is the extension in $\mathcal{B}(\mathbb{H})$ of $X_R$.
\label{Lemma:PositiveLim_Q}
\end{lemma}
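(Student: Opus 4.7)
The plan is to reduce the stated inequality to elementary linear-algebraic bounds by exploiting the block-decomposition induced by $\H=\H_S\oplus\H_R$ and the hypothesis $D_Q=0$, then to combine these bounds through the identity $r^{\delta}v=(rv)^{\delta}v^{1-\delta}$. Throughout, set $r(\rho):=\Tr(X\mathcal{J}_D(\rho))/\Tr(X\rho)$ and $v:=v_D(\rho)$.

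First, using $D_Q=0$ together with the fact that $X=P_R X P_R$ (where $P_R$ is the orthogonal projection on $\H_R$), I would obtain the explicit block expressions
\[
\Tr(X\rho)=\Tr(X_R\rho_R),\qquad \Tr(D^*XD\rho)=\Tr\bigl(D_R^*X_RD_R\,\rho_R\bigr),
\]
and hence, since $\Tr(X\mathcal{J}_D(\rho))=\Tr(D^*XD\rho)/v_D(\rho)$,
\[
r(\rho)\,v_D(\rho)=\frac{\Tr\bigl(D_R^*X_RD_R\,\rho_R\bigr)}{\Tr(X_R\rho_R)}.
\]
A Rayleigh-quotient estimate on this ratio of linear functionals of $\rho_R\ge0$ (bounding the numerator from above/below by $\bar{\lambda}(D_R^*X_RD_R)\Tr(\rho_R)$ and $\underline{\lambda}(D_R^*X_RD_R)\Tr(\rho_R)$, and the denominator oppositely) gives, for every $\rho$ with $\rho_R\ne 0$,
\[
\frac{\underline{\lambda}(D_R^*X_RD_R)}{\bar{\lambda}(X_R)}\;\le\; r(\rho)\,v_D(\rho)\;\le\;\frac{\bar{\lambda}(D_R^*X_RD_R)}{\underline{\lambda}(X_R)}.
\]
In parallel, expanding $v_D(\rho)=\Tr(D^*D\rho)$ in blocks shows that the cross and $\rho_R$-contributions vanish as $\rho\to\mathcal{I}(\H_S)$, while $\Tr(\rho_S)\to 1$, so $v_D(\rho)\to\Tr(D_S^*D_S\rho_S)$ and consequently $\underline{\lambda}(D_S^*D_S)\le\liminf v_D(\rho)\le\limsup v_D(\rho)\le \bar{\lambda}(D_S^*D_S)$.

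The decisive algebraic step is then the rewriting
\[
\bigl(r^{\delta}-(1-\delta)-\delta r\bigr)\,v=(rv)^{\delta}\,v^{1-\delta}-(1-\delta)v-\delta(rv),
\]
to which I would apply the bounds above term by term: an upper bound on $(rv)^{\delta}v^{1-\delta}$ (using the upper bounds on $rv$ and $v$), and lower bounds on the two subtracted terms $v$ and $rv$. Summing reproduces exactly $\Phi_\delta(X,D)$ as defined in~\eqref{Eq:Theta}, and taking $\limsup_{\rho\to\mathcal{I}(\H_S)}$ finishes the argument by the sub-additivity of $\limsup$.

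The main difficulty is essentially bookkeeping rather than analysis: one must verify that even though $\rho_R\to 0$ in norm, the ratio $\Tr(D_R^*X_RD_R\rho_R)/\Tr(X_R\rho_R)$ remains trapped between its eigenvalue bounds for every $\rho_R\ne 0$ and therefore its $\limsup$ inherits those bounds (this implicitly uses $X_R>0$ so that $\underline{\lambda}(X_R)>0$), and one must be careful to apply an \emph{upper} bound to the positive summand $(rv)^{\delta}v^{1-\delta}$ but \emph{lower} bounds to the two summands that enter with a minus sign. Once the directions are aligned, the concavity inequality $r^{\delta}-(1-\delta)-\delta r\le 0$ serves as a sanity check that $\Phi_\delta(X,D)\le 0$, which is exactly the sign needed for the downstream use of this lemma in Proposition~\ref{Proposition:GES_SME}.
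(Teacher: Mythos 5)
Your proposal is correct and follows essentially the same route as the paper's proof: the same block computation under $D_Q=0$ giving $\Tr(X\mathcal{J}_D(\rho))/\Tr(X\rho)=\Tr(D_R^*X_RD_R\rho_R)/\bigl(v_D(\rho)\Tr(X_R\rho_R)\bigr)$, the same Rayleigh-quotient bounds on $\Tr(D_R^*X_RD_R\rho_R)/\Tr(X_R\rho_R)$, the limit $v_D(\rho)\to\Tr(D_S^*D_S\rho_S)$, and the same term-by-term bounding of $(rv)^{\delta}v^{1-\delta}-(1-\delta)v-\delta(rv)$ to produce $\Phi_{\delta}(X,D)$. One small caveat on your closing aside: the concavity inequality $r^{\delta}-(1-\delta)-\delta r\le 0$ shows the quantity being bounded is pointwise nonpositive, but it does not by itself imply that the eigenvalue bound $\Phi_{\delta}(X,D)$ is nonpositive (the upper and lower eigenvalue estimates need not be tight simultaneously); this remark is not used in the proof, so the argument stands as is.
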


\proof
By a straightforward calculation under the assumption $D_Q=0$, we have
\begin{equation*}
\frac{\Tr(X\mathcal{J}_{D}(\rho))}{\Tr(X\rho)} = \frac{\Tr(D^*_{R}X_RD_R\rho_R)}{v_D(\rho)\Tr(X_R\rho_R)}.
\end{equation*}
It implies 
\begin{align*}
    &\left[\left(\frac{\Tr(X \mathcal{J}_{D}(\rho))}{\Tr(X\rho)}\right)^{\delta}-(1-\delta)-\delta\frac{\Tr(X \mathcal{J}_{D}(\rho))}{\Tr(X\rho)}\right]v_D(\rho)\\
    &=\left(\frac{\Tr(D^*_{R}X_RD_R\rho_R)}{\Tr(X_R\rho_R)}\right)^{\delta}v_D(\rho)^{1-\delta}-(1-\delta)v_D(\rho)-\delta\frac{\Tr(D^*_{R}X_RD_R\rho_R)}{\Tr(X_R\rho_R)}.
\end{align*}
Given $X_R>0$, it follows $D^*{R}X_RD_R\geq 0$, which implies 
$$
\frac{\underline{\lambda}(D^*_R X_R D_R)}{\bar{\lambda}(X_R)}\leq\frac{\Tr(D^*_{R}X_RD_R\rho_R)}{\Tr(X_R\rho_R)}\leq \frac{\bar{\lambda}(D^*_R X_R D_R)}{\underline{\lambda}(X_R)}.
$$
Moreover, we have $\lim_{\rho\rightarrow\mathcal{I}(\mathcal{H}_S)}\Tr(D\rho D^*)=\Tr(D^*_SD_S\rho_S )$ where $\Tr(\rho_S)=1$ for all $\rho\in \mathcal{I}(\mathbb{H}_S)$.
Combining these elements, we conclude the proof of the lemma.
\hfill$\square$

\bibliographystyle{plain}
\bibliography{Ref_SwitchingFeedback}
\end{document}